\newtheorem{theo}{Theorem}[section]
\newtheorem{prop}[theo]{Proposition}
\newtheorem{coro}[theo]{Corollary}
\newtheorem{lemm}[theo]{Lemma}
\theoremstyle{definition}
\newtheorem{defi}[theo]{Definition}
\newtheorem*{note*}{Note}
\newtheorem*{claim*}{Claim}
\newtheorem*{exam*}{Example}
\newtheorem*{rema*}{Remark}
\newtheorem{assu}[theo]{Assumption}
\author{Shota FUKUSHIMA\thanks{Graduate School of Mathematical Sciences, the University of Tokyo, 3-8-1 Komaba, Meguro-ku, Tokyo, 153-8914, Japan. 
Email: fukusima@ms.u-tokyo.ac.jp} 
\thanks{The author is supported by Leading Graduate Course for Frontiers of Mathematical Sciences and Physics (FMSP), at Graduate School of Mathematical Science, the University of Tokyo. }}
\title{Propagation of singularities under Schr\"odinger equations on manifolds with ends}
\newcommand{\vol}{\mathrm{vol}}
\newcommand{\jbracket}[1]{\left\langle {#1} \right\rangle}
\newcommand{\pprime}{{\prime\prime}}
\newcommand{\rmop}[1]{\mathop{\mathrm{#1}}}
\newcommand{\grad}{\rmop{grad}}
\newcommand{\diff}{\mathrm{d}}
\newcommand{\WFrh}{\rmop{WF}\nolimits^\mathrm{rh}}
\newcommand{\Oph}{\mathop{\mathrm{Op}}\nolimits_\hbar}
\newcommand{\Ophloc}[1][\iota]{\Oph^{\mathrm{loc}, {#1}}}
\newcounter{step}
\newcommand{\initstep}{\setcounter{step}{0}}
\newcommand{\step}{
    \par\vspace{2mm}
    \refstepcounter{step}
    \noindent\textbf{Step {\thestep}.}
}
\newenvironment{divstep}{\setcounter{step}{0}}{\par\vspace{2mm}}
\newcommand{\fstep}[1]{
    \par\vspace{2mm}
    \noindent\textbf{#1.}
}
\newcounter{enumpartcounter}
\numberwithin{equation}{section}
\begin{document}
\maketitle
\begin{abstract}
    We prove a microlocal smoothing effect of Schr\"odinger equations on manifolds. We employ radially homogeneous wavefront sets introduced by Ito and Nakamura (Amer. J. Math., 2009). In terms of radially homogeneous wavefront sets, we can apply our theory to both of asymptotically conical and hyperbolic manifolds. We relate wavefront sets in initial states to radially homogeneous wavefront sets in states after a time development. We also prove a relation between radially homogeneous wavefront sets and homogeneous wavefront sets and prove a special case of Nakamura (2005). 
\end{abstract}

\section{Introduction}

\subsection{Motivation: homogeneous wavefront sets on Euclidean spaces}\label{subs_hwf_euclid}

For proving a microlocal smoothing effect of a Schr\"odinger equation 
\[
    i\frac{\partial u}{\partial t}(t, x)=Hu(t, x) \quad (t, x)\in \mathbb{R}\times \mathbb{R}^n, 
\]
it is known that one does not need only the usual wavefront sets, which is localized with respect to positions, but also another notion of a wavefront set by which we can access to behavior of functions near infinity. One of the methods is a homogeneous wavefront set. Recall the definition of (homogeneous) wavefront sets from \cite{Nakamura05}: 

\begin{divstep}
\fstep{Wavefront sets on Euclidean spaces}For $u\in L^2(\mathbb{R}^n)$, we define a set $\rmop{WF}(u)\subset T^*\mathbb{R}^n\setminus 0$ by the following property: a point $(x_0, \xi_0)\in T^*\mathbb{R}^n\setminus 0$ \textit{does not} belong to $\rmop{WF} (u)$ if there exists $a\in C_c^\infty (T^*\mathbb{R}^n)$ such that $a=1$ near $(x_0, \xi_0)$ and 
    \[
        \|a^\mathrm{w}(x, \hbar D)u\|_{L^2}=O(\hbar^\infty)
    \]
    as $\hbar \to 0$. 
    
    Here $a^\mathrm{w}(x, \hbar D)$ is the usual semiclassical Weyl quantization of the symbol $a(x, \xi)$: 
    \[
        a^\mathrm{w}(x, \hbar D)u(x):=\frac{1}{(2\pi\hbar)^n}\int_{\mathbb{R}^{2n}} a\left(\frac{x+y}{2}, \xi\right) e^{i\xi \cdot (x-y)/\hbar}u(y)\, \diff y \diff\xi. 
    \]

\fstep{Homogeneous wavefront sets on Euclidean spaces}For $u\in L^2(\mathbb{R}^n)$, we define a set $\rmop{HWF}(u)\subset T^*\mathbb{R}^n\setminus \{(0, 0)\}$ by the following property: a point $(x_0, \xi_0)\in T^*\mathbb{R}^n\setminus \{(0, 0)\}$ \textit{does not} belong to $\rmop{HWF} (u)$ if there exists $a\in C_c^\infty (T^*\mathbb{R}^n)$ such that $a=1$ near $(x_0, \xi_0)$ and 
    \[
        \|a^\mathrm{w}(\hbar x, \hbar D)u\|_{L^2}=O(\hbar^\infty)
    \]
    as $\hbar \to 0$. 

    Here $a^\mathrm{w}(\hbar x, \hbar D)$ is the semiclassical Weyl quantization of the symbol $a(\hbar x, \xi)$: 
\[
    a^\mathrm{w}(\hbar x, \hbar D)u(x):=\frac{1}{(2\pi\hbar)^n}\int_{\mathbb{R}^{2n}} a\left(\frac{\hbar x+\hbar y}{2}, \xi\right) e^{i\xi \cdot (x-y)/\hbar}u(y)\, \diff y\diff \xi. 
\]
\end{divstep}

Nakamura \cite{Nakamura05} proved that, if 
\begin{itemize}
    \item $H=-\sum_{j, k=1}^n \partial_{x_j}a_{jk}(x)\partial_{x_k}/2+V(x)$, with a positive definite matrix $(a_{jk}(x))_{j, k=1}^n$, $a_{jk}(x), V(x)\in \mathbb{R}$, $|\partial_x^\alpha (a_{jk}(x)-\delta_{jk})|\leq C_\alpha \jbracket{x}^{-\mu-|\alpha|}$ and $|\partial^\alpha V(x)|\leq C_\alpha \jbracket{x}^{\nu-|\alpha|}$ for some $\mu>0$ and $\nu <2$, and  
    \item a classical orbit $(x(t), \xi(t))$ with respect to the classical Hamiltonian $h_0(x, \xi):=\sum_{j, k=1}^n a_{jk}(x)\xi_j \xi_k/2$ is nontrapping ($|x(t)|\to\infty$ as $t\to\infty$) and has an initial point $(x(0), \xi(0))=(x_0, \xi_0)$ and an asymptotic momentum $\xi_\infty:=\lim_{t\to \infty}\xi(t)$, 
\end{itemize}
then $(x_0, \xi_0)\in \rmop{WF}(u)$ implies $(t_0\xi_\infty, \xi_\infty)\in \rmop{HWF}(e^{-it_0 H}u)$ for $u\in L^2(\mathbb{R}^n)$ and $t_0>0$. As a corollary, $e^{-itH}u=O(\jbracket{x}^{-\infty})$ ($\Gamma \ni x\to \infty$) for some $t>0$ and some conic neighborhood $\Gamma$ of the asymptotic momentum $\xi_\infty$ implies $(x_0, \xi_0)\not\in \rmop{WF}(u)$, which is proved by Craig, Kappeler and Strauss \cite{Craig-Kappeler-Strauss96}. 

K. Ito \cite{Ito06} generalizes this result to Euclidean spaces with asymptotically flat scattering metrics. Other studies on singularities of solutions to Schr\"odinger equations is in Doi \cite{Doi96} and Nakamura \cite{Nakamura09}. 

There are other concepts of wavefront sets for investigating propagation of singularities under Schr\"odinger equations. One of them is a Gabor wavefront set, defined in terms of Gabor transforms (also known as short-time Fourier transforms or wave packet transforms). Schulz and Wahlberg \cite{Schulz-Wahlberg17} proved the equality of homogeneous wavefront sets and Gabor wavefront sets. Gabor wavefront sets are studied in Cordero, Nicola and Rodino \cite{Cordero-Nicola-Rodino15}, Pravda-Starov, Rodino and Wahlberg \cite{Pravda-Starov-Rodino-Wahlberg18}. Other study by Gabor transforms is in Kato, Kobayashi and S. Ito \cite{Kato-Kobayashi-Ito15,Kato-Kobayashi-Ito17}. Another concept of wavefront sets is a quadratic scattering wavefront set, which is studied by Wunsch \cite{Wunsch99}. An equivalence of quadratic scattering wavefront sets and homogeneous wavefront sets is proved by K. Ito \cite{Ito06}. Melrose \cite{Melrose94} introduced scattering wavefront sets for investigating singularities at infinity. Analytic wavefront sets are also employed for an investigation of propagation of singularities under Schr\"odinger equations. They are studied by Robbiano and Zuily \cite{Robbiano-Zuily99, Robbiano-Zuily02}, Martinez, Nakamura and Sordoni \cite{Martinez-Nakamura-Sordoni09}. 

\subsection{Radially homogeneous wavefront sets on manifolds}

In the following, contrary to Section \ref{subs_hwf_euclid}, we employ pseudodifferential operators acting on \textit{half-densities}. We will briefly describe basic definition and properties on half-densities in Section \ref{subs_psido_half_densities}. 

We recall wavefront sets on manifolds: 

\begin{defi}[Wavefront sets on manifolds]
    Let $u\in L^2(M; \Omega^{1/2})$. $\rmop{WF}(u)$ is a subset of $T^*M\setminus 0$ defined as follows: $(x_0, \xi_0)\in T^*M\setminus 0$ is \textit{not} in $\rmop{WF}(u)$ if there exist local coordinate $\varphi: U\, (\subset M)\to V\, (\subset \mathbb{R}^n)$, $\chi\in C_c^\infty (U)$ and $a\in C_c^\infty (T^*M)$ such that $\chi=1$ near $x_0$, $a=1$ near $(x_0, \xi_0)$, $\rmop{supp}a \subset T^* U$ and 
    \[
        \| \chi\varphi^* (\tilde \varphi_*a)^\mathrm{w}(x, \hbar D)\varphi_*(\chi u)\|_{L^2}=O(\hbar^\infty). 
    \]
\end{defi}

Next we introduce radially homogeneous wavefront sets, which are introduced by K. Ito and Nakamura \cite{Ito-Nakamura09} to prove a microlocal smoothing effect on scattering manifolds. Before we introduce radially homogeneous wavefront sets on manifolds, we need to equip manifolds with some structure corresponding to the dilation $x\mapsto \hbar x$ on Euclidean spaces. In this paper, motivated by the fact that the dilation $x\mapsto \hbar x$ on Euclidean spaces is equivalent to $(r, \theta)\mapsto (\hbar r, \theta)$ in polar coordinates, we introduce a structure of ends of manifolds: 

\begin{assu}\label{assu_manifold_with_end_0}
    Let $n$ be the dimension of $M$. We assume that there exist an open subset $E$ of $M$, a compact manifold $S$ with dimension $n-1$, and a diffeomorphism $\Psi: E\to \mathbb{R}_+\times S$. Here $\mathbb{R}_+:=(0, \infty)$. We also assume that $M\setminus E$ is a compact subset of $M$.  
\end{assu}

The mapping $\Psi: E\to \mathbb{R}_+\times S$ in Assumption \ref{assu_manifold_with_end_0} induces the canonical mapping
\begin{equation}\label{eq_defi_psi_lift}
    \tilde\Psi: T^*E \longrightarrow T^*(\mathbb{R}_+ \times S), \quad 
        \tilde \Psi (x, \rho \diff r +\eta):=  (\Psi (x), \rho, \eta). 
\end{equation}

We introduce a class of functions dependent only on angular variables $\theta$ near infinity: 

\begin{defi}\label{defi_conical}
    A function $u\in C^\infty (M)$ is \textit{cylindrical} if there exist a constant $R\geq 1$ and a function $u_\mathrm{ang}\in C^\infty (S)$ such that $(u\circ \Psi^{-1})(r, \theta)=u_\mathrm{ang}(\theta)$ for all $r\geq R$. 
\end{defi}

\begin{exam*}
    \begin{itemize}
        \item All constant functions are cylindrical. 
        \item All $u\in C_c^\infty (M)$ are cylindrical by considering $u_\mathrm{ang}=0$. 
        \item The set of all cylindrical functions forms an algebra with respect to the natural sum, multiplication by complex numbers and product. 
    \end{itemize}
\end{exam*}

Throughout this paper, we use the term ``polar coordinates'' in the following sense: 

\begin{defi}
    We call $\varphi: U\, (\subset M) \to V\, (\subset \mathbb{R}^n)$ \textit{polar coordinates} if $\varphi$ is a local coordinate of the form $\varphi=(\rmop{id}\times \varphi^\prime)\circ \Psi$ where $\varphi^\prime: U^\prime\, (\subset S)\to V^\prime\, (\subset \mathbb{R}^{n-1})$ is a local coordinate on $S$. 
\end{defi}

Now we define radially homogeneous wavefront sets on manifolds. 

\begin{defi}[Radially homogeneous wavefront sets on manifolds]\label{defi_hwf_manifolds}
    For $u\in L^2(M; \Omega^{1/2})$, we define $\WFrh (u)$ as a subset of $T^*E$ defined as follows: $(x_0, \xi_0)\in T^*E$ is \textit{not} in $\WFrh (u)$ if there exist 
    \begin{itemize}
        \item a polar coordinate $\varphi: U \to V$, 
        \item a cylindrical function $\chi\in C^\infty (M)$ such that $\rmop{supp} \chi \subset U$ and $\Psi_*\chi(r, \theta)=1$ for large $r$ and $\theta$ near $\theta_0$ with $\Psi(x_0)=(r_0, \theta_0)$, and 
        \item  $a\in C_c^\infty (T^*V)$
    \end{itemize}
    such that $a=1$ near $\tilde\varphi (x_0, \xi_0)$ and 
    \begin{equation}\label{eq_hwf_manifold_definition}
        \| \chi\varphi^* a^\mathrm{w}(\hbar r, \theta, \hbar D_r, \hbar D_\theta)\varphi_*(\chi u)\|_{L^2}=O(\hbar^\infty). 
    \end{equation}
\end{defi}

\subsection{Main result}

Our subject is a Schr\"odinger equation for half-densities on manifolds: 
\begin{equation}
    \label{eq_schrodinger_manifold}
    i\frac{\partial}{\partial t}u(t, x)=Hu(t, x). 
\end{equation}
Here $H$ is a Hamiltonian of the form 
\[
    H=-\frac{1}{2}\triangle_g+V(x), 
\]
where $\triangle_g$ is the Laplace operator with respect to the Riemannian metric $g$ and $V$ is a real-valued smooth function with 
We further assume that 
\begin{equation}\label{eq_potential_bdd}
    |\partial_r^{\alpha_0}\partial_\theta^{\alpha^\prime} V(r, \theta)|\leq C_\alpha    
\end{equation}
holds for all multiindices $\alpha=(\alpha_0, \alpha^\prime)\in \mathbb{Z}_{\geq 0}\times \mathbb{Z}_{\geq 0}$ in polar coordinates $(r, \theta)$. 

$H$ acts on half-densities as 
\[
    H(\tilde u |\vol_g|^{1/2})=\left(-\frac{1}{2}\triangle_g \tilde u(x)+V(x)\tilde u(x)\right) |\vol_g|^{1/2}. 
\]
($|\vol_g|^{1/2}$ is the ``square root'' of the natural volume form $\vol_g=\sqrt{\det (g_{jk})}\diff x_1\wedge \cdots \wedge \diff x_n$ associated with the Riemannian metric $g$. We will explain details in Section \ref{subs_psido_half_densities}.)

We will explain our assumptions concretely in the following, but we emphasize that our setting includes not only the cases of asymptotically conical manifolds, but also those of asymptotically hyperbolic manifolds. 

\begin{rema*}
    We do not assume $\partial_x^\alpha V =O(\jbracket{x}^{2-\varepsilon})$ for some $\varepsilon>0$, but the boundedness \eqref{eq_potential_bdd} in order to argue in the symbol class $S^m_\mathrm{cyl}(T^*M)$ (introduced in Section \ref{subs_cylindrical_class}), which do not allow any growth in spatial direction ($r\to \infty$). It may be possible to introduce suitable classes of symbols with spatial growth and treat such potentials $V$, we restrict ourselves to the case of bounded potentials for simplicity. 
\end{rema*}

We take suitable polar coordinates such that the vector $\partial_r$ and the tangent space $T_{(r, \theta)}S$ intersect orthogonally: 

\begin{assu}\label{assu_manifold_with_end_II}
    Under Assumption \ref{assu_manifold_with_end_0}, the Riemannian metric $g$ has the representation 
\begin{equation}\label{eq_metric_polar}
   g=\Psi^*(c(r, \theta)^2 \diff r^2+h(r, \theta, \diff \theta)), \quad \text{for } (r, \theta)\in \mathbb{R}_+ \times S \simeq E, 
\end{equation}
where $c: E\to \mathbb{R}_+$ is a smooth function and $h(r, \theta, \diff \theta)$ is a metric on $S$ dependent smoothly on the radial variable $r$. 
\end{assu}

We will see in Section \ref{subs_escape_function} a construction of such diffeomorphism $\Psi: E\to \mathbb{R}_+ \times S$ by escape functions, which is a generalization of the function $|x|$ in $\mathbb{R}^n$. 

We further assume a compatibility condition of the diffeomorphism $\Psi: E\to \mathbb{R}_+\times S$ and the Riemannian metric . Let $h^*(r, \theta, \eta)$ be the fiber metric on $T^*S$ induced by $h(r, \theta, \diff \theta)$. More explicitly, if $h(r, \theta, \diff \theta)$ has a local representation
\[
    h(r, \theta, \diff \theta)=\sum_{i, j=1}^{n-1}h_{ij}(r, \theta)\diff \theta_i \diff \theta_j, 
\]
then we define 
\[
    h^*(r, \theta, \eta):=\sum_{i, j=1}^{n-1}h^{ij}(r, \theta)\eta_i \eta_j, 
\]
where $(h^{ij}(r, \theta))$ is the inverse matrix of $(h_{ij}(r, \theta))$ and $\eta=\eta_1 \diff \theta_1+\cdots +\eta_{n-1}\diff \theta_{n-1}$. Furthermore we define 
\[
    |\xi|_{g^*}^2:=c(r, \theta)^{-2}\rho^2+h^*(r, \theta, \eta)
\]
for $\xi=\rho \,\diff r+\eta\in T^*_r \mathbb{R}_+\oplus T^*_\theta S$. $|\xi|_{g^*}$ is the norm of $\xi\in T^*M$ with respect to the fiber metric $g^*$ on $T^*M$ induced by the metric $g$. We define a free Hamiltonian $h_0(x, \xi)$ as 
\begin{equation}\label{eq_free_hamiltonian}
    h_0(x, \xi):=\frac{1}{2}|\xi|_{g^*}^2. 
\end{equation}

\begin{assu}\label{assu_classical}
    There exist a function $f(r)$ and constants $c_0>1/2$, $C>0$ and $\mu>0$ such that the following properties hold. 
    \begin{enumerate}[label=(\roman*)]
        \item $f$ is positive, belongs to $C^1$ class and the inequality \begin{equation}\label{eq_ineq_f_logbdd}
            c_0r^{-1}\leq \frac{f^\prime (r)}{f(r)}\leq C
        \end{equation} 
        holds for all $r\geq 1$. 
        \item The inequality 
        \begin{equation}\label{eq_ineq_model_bdd}
            C^{-1}f(r)^{-2}h^*(1, \theta, \eta)\leq h^*(r, \theta, \eta)\leq Cf(r)^{-2}h^*(1, \theta, \eta)
        \end{equation} 
        holds for all $(r, \theta, \eta)\in [1, \infty)\times T^*S$.  
        \item \label{assu_sub_dtheta_h}The inequality 
        \begin{equation}\label{eq_ineq_angular_bdd}
            |\partial_\theta h^{ij}(r, \theta, \eta)|\leq Ch^*(r, \theta, \eta)
        \end{equation} 
        holds for all $(r ,\theta, \eta)\in [1, \infty)\times T^*S$. 
        \item (Classical analogue of Mourre estimate) The estimate
        \begin{equation}\label{eq_classical_mourre} \{ f\rho, h_0\} \geq 2f^\prime (r)(h_0(r, \theta, \eta)-Cr^{-1-\mu}),\end{equation}
        holds for all $(r, \theta, \rho, \eta)\in T^*(\mathbb{R}_+\times S)$. Here $\{ \cdot, \cdot\}$ is the Poisson bracket on $T^*M$: 
        \[
            \{ a, b\}:=\sum_{i=1}^n \left( \frac{\partial a}{\partial x_i}\frac{\partial b}{\partial \xi_i}-\frac{\partial a}{\partial \xi_i}\frac{\partial b}{\partial x_i}\right). 
        \]
        \item (Short range conditions)\label{assu_sub_short_range} $|c-1|$ and $|\partial_\theta c|$ are at most $O(r^{-1-\mu})$ as $r\to\infty$. 
    \end{enumerate}
\end{assu}

\begin{rema*}
    The size of $|\partial_\theta h^*(r, \theta, \eta)|$ in \eqref{eq_ineq_angular_bdd} and $|\partial_\theta V|$ and $|\partial_\theta c|$ in \ref{assu_sub_short_range} of Assumption \ref{assu_classical} are measured by the metric $h(1, \theta, \diff \theta)$. 
\end{rema*}

\begin{exam*}[model manifolds]
    Assumption \ref{assu_classical} is a generalization of model cases $g=\diff r^2+f(r)^2 h(\theta, \diff \theta)$. For instance $f(r)$ is the form $f(r)=r^a, e^{b r^c}$ ($a>1/2$, $b>0$, $0<c\leq 1$) for $r\geq 1$. The free Hamiltonian \eqref{eq_free_hamiltonian} with respect to this metric becomes
    \[ h_0=\frac{1}{2}(\rho^2+f(r)^{-2}h^*(\theta, \eta)). \]
    The Poisson bracket $\{f \rho, h_0\}=-\partial_r h_0$ is
    \[ \{ f\rho, h_0\}=f^\prime(r)h_0(\theta, \eta). \]
    This is a prototype of classical Mourre type estimate in Assumption \ref{assu_classical}. 
\end{exam*}

Then, for any nontrapping classical orbit $\Psi^{-1}(r(t), \theta(t), \rho(t), \eta(t))$ ($r(t)\to \infty$) with respect to the free Hamiltonian $h_0$, the limit 
\[
    (\rho_\infty, \theta_\infty, \eta_\infty):=\lim_{t\to\infty} (\rho(t), \theta(t), \eta(t))\in \mathbb{R}_+\times T^*S
\] 
exists under Assumption \ref{assu_classical}. We state this more precisely in Theorem \ref{theo_classical_estimate}. We remark that the classical analogue of Mourre estimate \eqref{eq_classical_mourre} plays an essential role in a proof of Theorem \ref{theo_classical_estimate}. The inequality \eqref{eq_classical_mourre} insures that the classical orbit $(x(t), \xi(t))$ approaches an asymptotic orbit $\Psi^{-1}(\rho_\infty t, \theta_\infty, \rho_\infty, \eta_\infty)$ rapidly. 

\begin{rema*}
    It is well known that the inequality 
    \begin{equation}\label{eq_mourre}
        1_I(H)i[H, A]1_I(H)\geq c1_I(H)-1_I(H)K 1_I(H), 
    \end{equation}
    where 
    \begin{itemize}
        \item $A$, $H$ are self-adjoint operators on an abstract Hilbert space $\mathcal{H}$, 
        \item $I=(a, b)$ is a bounded open interval and $1_I:\mathbb{R}\to \{0, 1\}$, is the indicator function of $I$, 
        \item $c$ is a positive constant, and 
        \item $K$ is a compact operator on $\mathcal{H}$, 
    \end{itemize}
    plays an important role in a quantum scattering theory. The inequality \eqref{eq_mourre} is called the \textit{Mourre estimate} \cite{Mourre80}. A typical case is $\mathcal{H}=L^2(\mathbb{R}^n)$, $H=-\triangle/2$ and $A=(x\cdot D_x+D_x\cdot x)/2$. The principal symbol of $A$ is equal to $x\cdot \xi=r\rho$ in polar coordinates, and the principal symbol of $i[H, A]$ is $\{ r\rho, h_0\}$. Thus we can regard \eqref{eq_classical_mourre} as a classical analogue of \eqref{eq_mourre} with $f(r)=r$. 
\end{rema*}

The last assumption is a boundedness of quantities related to the metric necessary for applying microlocal analysis. 

\begin{assu}\label{assu_higher_derivative}
    For all multiindices $\alpha\in \mathbb{Z}_{\geq 0}$ with $|\alpha|\geq 1$, there exists $C>0$ that the inequalities
    \[
        |\partial_{r, \theta}^\alpha h^*(r, \theta, \eta)|\leq Ch^*(r, \theta, \eta)
    \]
    and  
    \[
        |\partial_{r, \theta}^\alpha c(r, \theta)|\leq Cr^{-1-\mu}
    \]
    hold for all $(r ,\theta, \eta)\in [1, \infty)\times T^*S$. 
\end{assu}

Now we state our main theorem. 

\begin{theo}\label{theo_main_rhwf}
    Suppose Assumption \ref{assu_manifold_with_end_0}--\ref{assu_higher_derivative}. Let $u\in L^2(M; \Omega^{1/2})$ and $t_0>0$. Let $(x(t), \xi (t))$ be a nontrapping classical orbit with initial point $(x_0, \xi_0)\in T^*M$ with respect to the free Hamiltonian $h_0$ and $(\rho_\infty, \theta_\infty, \eta_\infty)\in \mathbb{R}\times T^*S$ be the asymptotic angle and momentum. Then $(x_0, \xi_0)\in \rmop{WF}(u)$ implies $\tilde\Psi^{-1}(\rho_\infty t_0, \theta_\infty, \rho_\infty, \eta_\infty)\in \WFrh (e^{-it_0H}u)$. 
\end{theo}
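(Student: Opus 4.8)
The plan is to argue by contraposition: assume that the asymptotic point $p_\infty := \tilde\Psi^{-1}(\rho_\infty t_0, \theta_\infty, \rho_\infty, \eta_\infty)$ is \emph{not} in $\WFrh(e^{-it_0H}u)$ and deduce that $(x_0,\xi_0)\notin\rmop{WF}(u)$. The bridge between the two notions will be the Egorov-type (propagation) statement: conjugating a semiclassical pseudodifferential operator by the Schr\"odinger propagator $e^{-itH}$ over the finite time interval $[0,t_0]$ yields again a pseudodifferential operator whose principal symbol is transported by the classical Hamiltonian flow of $h_0$. The additional subtlety here — and the reason radially homogeneous wavefront sets enter — is that a localizer of the form $a^\mathrm{w}(\hbar r,\theta,\hbar D_r,\hbar D_\theta)$ concentrated near $p_\infty$ (i.e.\ at spatial distance $\sim \rho_\infty t_0/\hbar$, momentum $\sim(\rho_\infty,\eta_\infty)$) corresponds, after the dilation $r\mapsto r/\hbar$ is undone, to a standard semiclassical localizer whose spatial support escapes to infinity as $\hbar\to 0$; following the backward classical flow over time $t_0$, this converges to the localizer sitting near $(x_0,\xi_0)$ at bounded spatial distance. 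That convergence statement is exactly the content of the classical estimate announced in Theorem~\ref{theo_classical_estimate}, which guarantees existence of $(\rho_\infty,\theta_\infty,\eta_\infty)$ and rapid approach of the orbit to the asymptotic ray $\Psi^{-1}(\rho_\infty t,\theta_\infty,\rho_\infty,\eta_\infty)$ under the Mourre-type hypothesis \eqref{eq_classical_mourre}.

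\textbf{Key steps.} First, I would reduce the statement about $e^{-it_0H}u$ to a statement at time $0$ by the identity $u = e^{it_0H}(e^{-it_0H}u)$, so it suffices to propagate the $\WFrh$-regularity of $v:=e^{-it_0H}u$ at $p_\infty$ backward under $e^{it_0H}$ to $\rmop{WF}$-regularity of $u$ at $(x_0,\xi_0)$. Second, I would choose, by the definition of $\WFrh$, a polar coordinate $\varphi$, a cylindrical cutoff $\chi$ and a symbol $a$ equal to $1$ near $\tilde\varphi(p_\infty)$ with $\|\chi\varphi^*a^\mathrm{w}(\hbar r,\theta,\hbar D_r,\hbar D_\theta)\varphi_*(\chi v)\|_{L^2}=O(\hbar^\infty)$, and rewrite $a^\mathrm{w}(\hbar r,\theta,\hbar D_r,\hbar D_\theta)$ as a conjugation of an ordinary semiclassical operator by the unitary dilation in $r$; this records that $v$ is semiclassically microlocally trivial on a region tracking the asymptotic ray. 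Third — the heart of the argument — I would run the semiclassical Egorov theorem for $H$ on the time interval $[0,t_0]$: since over a fixed compact time interval the classical flow of $h_0$ carries a neighborhood of $(x_0,\xi_0)$ forward to a neighborhood of the rescaled asymptotic point, the conjugated operator $e^{it_0 H/\hbar}\,(\text{localizer at }p_\infty)\,e^{-it_0H/\hbar}$ is, modulo $O(\hbar^\infty)$, a pseudodifferential operator elliptic near $(x_0,\xi_0)$; the nontrapping hypothesis ensures no symbol mass escapes at finite time, and Assumptions~\ref{assu_higher_derivative} give the uniform symbol bounds needed for the remainder estimates. Applying this to $v$ and using unitarity of the propagator then yields $\|b^\mathrm{w}(x,\hbar D)\varphi_*(\chi u)\|_{L^2}=O(\hbar^\infty)$ for some $b$ elliptic at $(x_0,\xi_0)$, which is precisely the statement $(x_0,\xi_0)\notin\rmop{WF}(u)$. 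Fourth, I would verify the bookkeeping: that the cylindrical structure of $\chi$ is compatible with the coordinate change and that the half-density normalization does not affect the leading-order symbol computations (both handled by the material of Sections~\ref{subs_psido_half_densities} and~\ref{subs_cylindrical_class}).

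\textbf{Main obstacle.} The delicate point is the Egorov argument when the ``initial'' localizer lives near infinity with the nonstandard scaling $(\hbar r,\theta,\hbar D_r,\hbar D_\theta)$: one must show that conjugating such an operator by $e^{it_0H/\hbar}$ still produces a genuine $\hbar$-pseudodifferential operator with controlled symbol, i.e.\ that the flow does not destroy the symbol class across the time interval. This requires the quantitative control on the classical orbit from Theorem~\ref{theo_classical_estimate} — that $(x(t),\xi(t))$ approaches $\Psi^{-1}(\rho_\infty t,\theta_\infty,\rho_\infty,\eta_\infty)$ with rapid (integrable) error, which the Mourre estimate \eqref{eq_classical_mourre} supplies — together with a careful choice of symbol classes (presumably $S^m_\mathrm{cyl}(T^*M)$) that are stable under both the dilation and the Hamiltonian flow. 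Establishing this stability, and the accompanying $O(\hbar^\infty)$ remainder control uniform in $\hbar$, is where essentially all the work of the proof will go; the reduction to $\rmop{WF}$ at $(x_0,\xi_0)$ afterward is then formal.
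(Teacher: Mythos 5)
Your overall skeleton (contraposition: assume $\tilde\Psi^{-1}(\rho_\infty t_0,\theta_\infty,\rho_\infty,\eta_\infty)\not\in\WFrh(e^{-it_0H}u)$ and deduce $(x_0,\xi_0)\not\in\rmop{WF}(u)$) matches the paper, but the central mechanism you propose — a semiclassical Egorov theorem ``on the time interval $[0,t_0]$'' — does not work as stated, because of a time-scale mismatch. The propagator in the theorem is $e^{-it_0H}$, not $e^{-it_0H/\hbar}$: relative to the $\hbar$-scaled localizers ($\hbar D\sim\xi_0$, so momentum $\sim\hbar^{-1}$, and $\hbar r\sim\rho_\infty t_0$, so position $\sim\hbar^{-1}$), the relevant classical dynamics is the $h_0$-flow run up to time $t_0/\hbar\to\infty$, which is exactly why the paper's observables are built from symbols evaluated at the rescaled time $\hbar^{-1}t$ (see \eqref{eq_defi_op_at}). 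Over a \emph{fixed} interval $[0,t_0]$ the flow of $h_0$ moves a bounded distance and cannot connect a neighborhood of $(x_0,\xi_0)$ with the rescaled asymptotic point at spatial distance $\sim\rho_\infty t_0/\hbar$; so your step ``over a fixed compact time interval the classical flow carries a neighborhood of $(x_0,\xi_0)$ to a neighborhood of the rescaled asymptotic point'' is false, and ``no symbol mass escapes at finite time'' is not the issue. What you would actually need is a two-sided Egorov/conjugation theorem with full symbolic control up to times of order $\hbar^{-1}$ (with the orbit escaping to infinity), i.e. the claim that $e^{it_0H}(\text{localizer at }p_\infty)e^{-it_0H}$ is, mod $O(\hbar^\infty)$, a pseudodifferential operator elliptic at $(x_0,\xi_0)$; this is a much stronger statement than anything the paper proves, and establishing it would run into exactly the long-time/Ehrenfest-type difficulties your ``main obstacle'' paragraph gestures at without resolving.

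The paper avoids any such conjugation. It proves only a one-sided, positive-commutator (propagation-observable) estimate in the spirit of Nakamura: time-dependent cutoffs \eqref{eq_defi_chi4} follow the classical orbit at time $\hbar^{-1}t$ with widths $\delta_j-t^{-\lambda}$ chosen so that, thanks to the classical Mourre estimate \eqref{eq_classical_mourre} and the orbit asymptotics of Theorem \ref{theo_classical_estimate}, the Lagrange derivative $\partial_t\tilde\psi_j+\{\tilde\psi_j,h_0\}$ is pointwise nonnegative (Lemma \ref{lemm_positive_lagrange_derivative}); the sharp G\aa rding inequality (Theorem \ref{theo_sharp_garding}) and an iterative correction with the lower-order symbols $c_j\hbar^j t\psi_j$ then give $\partial_tA_\hbar(t)-i[A_\hbar(t),H]\geq O_{L^2\to L^2}(\hbar^k)$ (Theorem \ref{theo_symbol_aim}). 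Integrating this inequality over $[0,t_0]$ yields $\jbracket{A_\hbar(0)u,u}\leq\jbracket{A_\hbar(t_0)e^{-it_0H}u,e^{-it_0H}u}+O(\hbar^k)$; the right-hand side is $O(\hbar^\infty)$ by the $\WFrh$ hypothesis via Theorem \ref{theo_hwf_quantization}, and the left-hand side controls $\|\Oph(\psi_0(0))u\|^2$ with $\psi_0(0)=1$ near $(x_0,\xi_0)$, so Proposition \ref{prop_wf_quantization} gives $(x_0,\xi_0)\not\in\rmop{WF}(u)$. In short, your reduction steps (choice of $\varphi,\chi,a$ from the definition of $\WFrh$, and the final conversion to $\rmop{WF}$) are consistent with the paper, but the heart of your argument — a fixed-time Egorov conjugation — would fail, and repairing it in the only available way leads back to the paper's construction.
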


We emphasize that the proof of the main theorem in the case of asymptotically conical/hyperbolic is unified. 

In the case of $M=\mathbb{R}^n$, $S=S^{n-1}$ (($n-1$)-dimensional unit sphere) and $\Psi(x):=(|x|, x/|x|)$, we have a relation between radially homogeneous wavefront sets and homogeneous wavefront sets. First we remark a characterization of homogeneous wavefront sets by polar coordinates: 
\begin{prop}\label{prop_hwf_polar}
    For $u\in L^2 (M; \Omega^{1/2})$, $x\neq 0$ and $\xi\in\mathbb{R}^n$, the following statements are equivalent. 
    \begin{enumerate}[label=(\roman*)]
        \item $(x, \xi)\not\in \rmop{HWF}(u)$. 
        \item There exist polar coordinates $\varphi: U\to V$, cylindrical function $\chi\in C^\infty (\mathbb{R}^n)$ with $\rmop{supp}\chi \subset U$ and $\chi=1$ near the set $\{ \lambda x \mid \lambda\geq 1\}$, and $a\in C_c^\infty (V)$ with $a=1$ near $\Psi (x, \xi)$ such that 
        \begin{equation}\label{eq_hwf_polar}
        \|a^\mathrm{w}(\hbar r, \theta, \hbar D_r, \hbar^2 D_\theta)\varphi_*(\chi u)\|_{L^2(\mathbb{R}^n; \Omega^{1/2})}=O(\hbar^\infty)
        \end{equation}
        holds. 
    \end{enumerate}
\end{prop}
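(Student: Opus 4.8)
The plan is to prove the equivalence by conjugating the homogeneous localization $a^\mathrm{w}(\hbar x,\hbar D)$ of Section~\ref{subs_hwf_euclid} by the polar-coordinate diffeomorphism $\Psi\colon\mathbb{R}^n\setminus 0\to\mathbb{R}_+\times S^{n-1}$ and identifying the result with the polar localization of \eqref{eq_hwf_polar}, up to an $O(\hbar^\infty)$ error in operator norm and up to the cutoff $\chi$. Two elementary facts organize the argument. First, $a^\mathrm{w}(\hbar x,\hbar D)$ is, after rescaling the frequency $\xi\mapsto\hbar\xi$, the ordinary Weyl quantization of the dilated symbol $a(\hbar x,\hbar\xi)$ (and likewise $a^\mathrm{w}(\hbar r,\theta,\hbar D_r,\hbar^2D_\theta)$ is the Weyl quantization in a polar chart of the anisotropically dilated symbol $a(\hbar r,\theta,\hbar\rho,\hbar^2\eta)$). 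Since $x\mapsto\hbar x$ is $(r,\theta)\mapsto(\hbar r,\theta)$ in polar coordinates, and since the cotangent lift in \eqref{eq_defi_psi_lift} has radial covariable $\rho=\xi\cdot x/|x|$ and angular covariable essentially $|x|\,\xi^\perp$ (with $\xi^\perp$ the part of $\xi$ tangent to the sphere), the map $\tilde\Psi$ conjugates the Euclidean homogeneous scaling $(x,\xi)\mapsto(\hbar x,\hbar\xi)$ into $(r,\theta,\rho,\eta)\mapsto(\hbar r,\theta,\hbar\rho,\hbar^2\eta)$ --- the $\hbar^2$ on $\eta$ being one factor of $\hbar$ from $\xi\mapsto\hbar\xi$ and one from $|x|\mapsto\hbar|x|$ --- so the weight on $D_\theta$ is $\hbar^2$ (this is also why the scaling here differs from that in Definition~\ref{defi_hwf_manifolds}), and $\Psi(x,\xi)=\tilde\Psi(x,\xi)$ in the statement is the corresponding ``order one'' point. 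Second, for $a$ with $\hbar$-independent compact support these dilated symbols stay in one fixed weighted symbol class with seminorms bounded uniformly as $\hbar\to 0$ --- equivalently, dilating the $r$-variable trades the powers of $\hbar$ for a genuine semiclassical parameter --- so $L^2$-boundedness, composition, and pseudolocality hold with $\hbar$-uniform constants; I also use the canonical half-density trivialization, so that dilations and $\Psi$ act unitarily with no Jacobian factors.

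Granting this, I would first record a shrinking lemma: if $\|a^\mathrm{w}v_\hbar\|_{L^2}=O(\hbar^\infty)$ for a symbol $a$ with $a\equiv 1$ on an open set $O$, then $\|b^\mathrm{w}v_\hbar\|_{L^2}=O(\hbar^\infty)$ for every (possibly $\hbar$-dependent, uniformly bounded) symbol $b$ with $\rmop{supp}\,b\subset O$; this follows from $b^\mathrm{w}=b^\mathrm{w}a^\mathrm{w}+b^\mathrm{w}(1-a^\mathrm{w})$, where the first term is $O(\hbar^\infty)$ since $\|b^\mathrm{w}\|$ is uniformly bounded, and the second is residual --- hence $O(\hbar^\infty)$ in operator norm --- because $b$ and $1-a$ have disjoint supports. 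I would then state the change-of-variables formula in the form needed: for $a\in C^\infty_c(T^*\mathbb{R}^n)$ supported over a single polar chart one has, modulo operators that are $O(\hbar^\infty)$ in operator norm,
\[
\varphi^*\,a^\mathrm{w}(\hbar r,\theta,\hbar D_r,\hbar^2D_\theta)\,\varphi_*=\bigl(\tilde a_\hbar\bigr)^\mathrm{w}(\hbar x,\hbar D),
\]
where $\tilde a_\hbar$ has principal part $\tilde\Psi^*a$ and $\rmop{supp}\,\tilde a_\hbar\subset\tilde\Psi^{-1}(\rmop{supp}\,a)$, and symmetrically in the other direction; in particular, by the first fact above, a symbol that is $\equiv 1$ near $\Psi(x,\xi)$ transports to a symbol that is $\equiv 1$ near $(x,\xi)$, and conversely.

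The two implications then reduce to bookkeeping. For $(\mathrm{i})\Rightarrow(\mathrm{ii})$: choose $\tilde a\equiv 1$ near $(x,\xi)$ with $\|\tilde a^\mathrm{w}(\hbar x,\hbar D)u\|=O(\hbar^\infty)$, pick any polar chart $\varphi\colon U\to V$ around $x$ and a cylindrical $\chi$ with $\rmop{supp}\,\chi\subset U$ and $\chi=1$ near the ray $\{\lambda x\mid\lambda\ge 1\}$, and choose $a\in C^\infty_c(V)$ with $a\equiv 1$ near $\Psi(x,\xi)$ and $\rmop{supp}\,a$ small enough that $\tilde\Psi^{-1}(\rmop{supp}\,a)\subset\{\tilde a=1\}$. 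Transporting $a^\mathrm{w}(\hbar r,\theta,\hbar D_r,\hbar^2D_\theta)$ to Euclidean coordinates gives $c_\hbar^\mathrm{w}(\hbar x,\hbar D)$ with $\rmop{supp}\,c_\hbar\subset\{\tilde a=1\}$ for small $\hbar$; replacing $\chi u$ by $u$ costs only $c_\hbar^\mathrm{w}(\hbar x,\hbar D)\bigl((1-\chi)u\bigr)$, which is $O(\hbar^\infty)$ by pseudolocality since the symbol of $c_\hbar^\mathrm{w}$ sits over $\hbar x\approx x$ --- i.e.\ over points $x$ large in the direction $x/|x|$, where $1-\chi$ vanishes --- and $\|c_\hbar^\mathrm{w}(\hbar x,\hbar D)u\|=O(\hbar^\infty)$ by the shrinking lemma, so \eqref{eq_hwf_polar} holds. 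For $(\mathrm{ii})\Rightarrow(\mathrm{i})$: given the data of (ii), choose $\tilde a\equiv 1$ near $(x,\xi)$ with $\rmop{supp}\,\tilde a$ so small that $\tilde\Psi(\rmop{supp}\,\tilde a)\subset\{a=1\}$; since $\tilde a^\mathrm{w}(\hbar x,\hbar D)u=\tilde a^\mathrm{w}(\hbar x,\hbar D)(\chi u)+O(\hbar^\infty)$ by the same pseudolocality argument ($\chi=1$ on a neighbourhood of $x/\hbar$ for small $\hbar$) and $\chi u$ is supported in $U$, transporting to polar coordinates gives $\tilde a^\mathrm{w}(\hbar x,\hbar D)u=\varphi^*\bigl[b_\hbar^\mathrm{w}(\hbar r,\theta,\hbar D_r,\hbar^2D_\theta)\bigr]\varphi_*(\chi u)+O(\hbar^\infty)$ with $\rmop{supp}\,b_\hbar\subset\{a=1\}$, and the shrinking lemma applied to hypothesis \eqref{eq_hwf_polar} yields $\|\tilde a^\mathrm{w}(\hbar x,\hbar D)u\|=O(\hbar^\infty)$, i.e.\ $(x,\xi)\notin\rmop{HWF}(u)$.

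The step I expect to be the main obstacle is the change-of-variables formula with uniformity in $\hbar$: one must verify that after passing to polar coordinates (chart by chart on $S^{n-1}$) the $\hbar$-dependent symbols really remain in one fixed weighted class with seminorms uniform in $\hbar$, and that the error terms are $O(\hbar^\infty)$ rather than merely $O(\hbar)$. This is exactly where one uses the rescaling that converts the anisotropic $(\hbar,\hbar,\hbar^2)$-weights into an ordinary semiclassical calculus, together with the compact support of the symbols, which makes all remainder terms in the composition and change-of-variables expansions residual.
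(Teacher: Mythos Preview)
Your proposal is correct and follows essentially the same strategy as the paper's proof: transport the localizing operator through the polar change of variables, identify the scaling $(x,\xi)\mapsto(\hbar x,\hbar\xi)$ with $(r,\theta,\rho,\eta)\mapsto(\hbar r,\theta,\hbar\rho,\hbar^2\eta)$ via the explicit form of $\tilde\varphi^{-1}$, and then use disjoint-support (your ``shrinking lemma'' / pseudolocality) to kill the remainders. The only organizational difference is that the paper applies the change of variables to $(\chi-a)^{\mathrm w}(\hbar x,\hbar D)$ and works in the intermediate $(\hbar D_r,\hbar D_\theta)$-calculus to read off the support relation, whereas you conjugate by the $r$-dilation first to reduce everything to a standard $\hbar^2$-semiclassical change of variables---your route makes the uniformity in $\hbar$ more transparent, while the paper's keeps the computation closer to the definitions; both handle the cutoff $\chi$ by the same disjoint-support argument.
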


We compare symbols $a(\hbar r, \theta, \hbar \rho, \hbar \eta)$ in \eqref{eq_hwf_manifold_definition} and $a(\hbar r, \theta, \hbar \rho, \hbar^2 \eta)$ in \eqref{eq_hwf_polar}. Let us pay attention to the $\eta$ variable. The support of $a(\hbar, \theta, \hbar \rho, \hbar \eta)$ is included in $|\eta-\hbar^{-1}\eta_0|\leq O(\hbar^{-1})$, whereas that of $a(\hbar, \theta, \hbar \rho, \hbar^2 \eta)$ is included in $|\eta-\hbar^{-2}\eta_0|\leq O(\hbar^{-2})$ ($\tilde\Psi(x_0, \xi_0)=(r_0, \theta_0, \rho_0, \eta_0)$). In particular, if $\eta_0=0$, then the support of $a(\hbar r, \theta, \hbar \rho, \hbar \eta)$ is included in the level set $\{a(\hbar r, \theta, \hbar \rho, \hbar^2 \eta)\}$ for sufficiently small $\hbar>0$. Thus we have the following corollary, noting that $\tilde\Psi^{-1}(r_0, \theta_0, \rho_0, 0)=(x_0, (\xi_0\cdot \hat x_0)\hat x_0)$ with $\hat x_0:=x_0/|x_0|$: 
\begin{coro}\label{coro_cylindrical_homogeneous}
    Let $u\in L^2(\mathbb{R}^n; \Omega^{1/2})$. We define a homogeneous wavefront set of half-density $u=\tilde u |\diff x|^{1/2}$ as that of the function $\tilde u(x)$. Then, for $x\neq 0$, $(x, \xi)\in \WFrh (u)$ implies $(x, (\xi\cdot \hat x) \hat x)\in \rmop{HWF}(u)$ where $\hat x:=x/|x|$. 
\end{coro}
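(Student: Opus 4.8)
The plan is to argue by contraposition: assuming $(x_0,(\xi_0\cdot\hat x_0)\hat x_0)\notin\rmop{HWF}(u)$, I shall produce the data of Definition~\ref{defi_hwf_manifolds} witnessing $(x_0,\xi_0)\notin\WFrh(u)$. Let $(r_0,\theta_0,\rho_0,\eta_0)$ denote the coordinates of $(x_0,\xi_0)$ in a polar chart, so that, as recorded just before the statement, $(x_0,(\xi_0\cdot\hat x_0)\hat x_0)$ has coordinates $(r_0,\theta_0,\rho_0,0)$. Applying Proposition~\ref{prop_hwf_polar} at the point $(x_0,(\xi_0\cdot\hat x_0)\hat x_0)$ furnishes polar coordinates $\varphi\colon U\to V$, a cylindrical $\chi\in C^\infty(\mathbb{R}^n)$ with $\rmop{supp}\chi\subset U$ and $\chi=1$ near $\{\lambda x_0\mid\lambda\geq1\}$, and $a\in C_c^\infty(T^*V)$ with $a=1$ near $(r_0,\theta_0,\rho_0,0)$, such that $\|a^\mathrm{w}(\hbar r,\theta,\hbar D_r,\hbar^2D_\theta)v\|_{L^2}=O(\hbar^\infty)$ for $v:=\varphi_*(\chi u)$. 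These $\varphi$ and $\chi$ are already admissible in Definition~\ref{defi_hwf_manifolds}, since $\chi$ is cylindrical with $\rmop{supp}\chi\subset U$ and $\Psi_*\chi(r,\theta)=\chi_\mathrm{ang}(\theta)=1$ for $r$ large and $\theta$ near $\theta_0$.

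It remains to choose the symbol $b$. I take $b\in C_c^\infty(T^*V)$ with $b=1$ near $(r_0,\theta_0,\rho_0,\eta_0)$ and with $\rmop{supp}b$ small enough that its projection onto the $(r,\theta,\rho)$-variables lies in a set on which $a\equiv1$. Set $A_\hbar:=a^\mathrm{w}(\hbar r,\theta,\hbar D_r,\hbar^2D_\theta)$ and $B_\hbar:=b^\mathrm{w}(\hbar r,\theta,\hbar D_r,\hbar D_\theta)$. The key elementary point, already indicated before the corollary, is that on the support of the symbol $b(\hbar r,\theta,\hbar\rho,\hbar\eta)$ one has $(\hbar r,\theta,\hbar\rho)$ near $(r_0,\theta_0,\rho_0)$ and $\hbar\eta$ near $\eta_0$, hence $|\eta|\leq C\hbar^{-1}$, so $|\hbar^2\eta|\leq C\hbar\to0$; consequently, for all sufficiently small $\hbar$, this support is contained in the region where $a(\hbar r,\theta,\hbar\rho,\hbar^2\eta)=1$. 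Thus the symbols $b(\hbar r,\theta,\hbar\rho,\hbar\eta)$ and $1-a(\hbar r,\theta,\hbar\rho,\hbar^2\eta)$ have disjoint supports, separated by an amount of order $\hbar^{-2}$ in the angular frequency $\eta$.

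Writing $\chi\varphi^*B_\hbar v=\chi\varphi^*B_\hbar A_\hbar v+\chi\varphi^*B_\hbar(1-A_\hbar)v$, I bound the two terms separately. For the first, $\chi\varphi^*B_\hbar$ is bounded on $L^2$ uniformly in $\hbar$ by a Calder\'on--Vaillancourt estimate in the relevant symbol class, so its norm is $O(1)\cdot\|A_\hbar v\|_{L^2}=O(\hbar^\infty)$. For the second, since $v\in L^2$ and $\chi\varphi^*$ is bounded, it suffices to show $B_\hbar(1-A_\hbar)=O(\hbar^\infty)$ on $L^2$, which is the disjoint-support composition estimate: every term of the symbolic expansion of the composition vanishes because the two factors have disjoint supports, and the remainders are $O(\hbar^\infty)$ thanks to the separation of supports. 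Adding the two bounds gives $\|\chi\varphi^*B_\hbar\varphi_*(\chi u)\|_{L^2}=O(\hbar^\infty)$, which is precisely the condition of Definition~\ref{defi_hwf_manifolds}, so $(x_0,\xi_0)\notin\WFrh(u)$.

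The step demanding care is the bound $B_\hbar(1-A_\hbar)=O(\hbar^\infty)$, because $A_\hbar$ and $B_\hbar$ carry matched radial scalings $\hbar D_r$ but different angular scalings: $\hbar^2D_\theta$ for $A_\hbar$ against $\hbar D_\theta$ for $B_\hbar$. The natural framework is the local semiclassical calculus with angular quantization $\hbar D_\theta$ underlying Definition~\ref{defi_hwf_manifolds}: in it $A_\hbar$ is the operator whose symbol, regarded as a function of the semiclassical angular frequency $\hbar\eta$, is $a(\hbar r,\theta,\hbar\rho,\hbar^2\eta)$, a bounded family in the relevant class; then $1-A_\hbar$ has its angular frequencies confined to $|\hbar\eta|\gtrsim\hbar^{-1}$, whereas those of $B_\hbar$ cluster near $\hbar\eta=\eta_0$, so the distance between these frequency regions tends to infinity and the composition is $O(\hbar^\infty)$ by the disjoint-support composition theorem. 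With this reframing the proof reduces to invoking the boundedness and composition theorems established earlier, together with the elementary support comparison above; so the only genuinely new input is that comparison.
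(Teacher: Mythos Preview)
Your proof is correct and follows essentially the same route as the paper: contraposition, an application of Proposition~\ref{prop_hwf_polar} to obtain the symbol $a$ with the $\hbar^2 D_\theta$ scaling equal to $1$ near $(r_0,\theta_0,\rho_0,0)$, a choice of a small-support symbol $b$ (the paper calls it $c$) equal to $1$ near $(r_0,\theta_0,\rho_0,\eta_0)$, and the support-nesting observation that forces $B_\hbar(1-A_\hbar)=O_{L^2\to L^2}(\hbar^\infty)$, equivalently $B_\hbar A_\hbar=B_\hbar+O_{L^2\to L^2}(\hbar^\infty)$. The paper states this composition estimate without further comment; your last paragraph spells out the reframing in the $\hbar D_\theta$ calculus that justifies it, which is a welcome clarification rather than a departure.
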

We also prove Corollary \ref{coro_cylindrical_homogeneous} in Section \ref{subs_cylindrical_homogeneous}. Combining Theorem \ref{theo_main_rhwf} and Corollary \ref{coro_cylindrical_homogeneous}, we immediately obtain a propagation of homogeneous wavefront sets on Euclidean spaces: 

\begin{coro}
    Let $M=\mathbb{R}^n$ with the usual Euclidean metric, $\Psi(x)=(x, x/|x|)$, and $H=-\triangle/2+V$ with $|\partial_r^{\alpha_0}\partial_\theta^{\alpha^\prime}V|\leq C_\alpha$ for all multiindices $\alpha=(\alpha_0, \alpha^\prime)\in \mathbb{Z}_{\geq 0}\times \mathbb{Z}_{\geq 0}^{n-1}$. Let $(x(t), \xi(t))=(x(0)+t\xi_\infty, \xi_\infty)$ be a free classical orbit. Then, for $u\in L^2(\mathbb{R}^n; \Omega^{1/2})$ and $t_0>0$, $(x(0), \xi(0))\in\rmop{WF}(u)$ implies $(t_0\xi_\infty, \xi_\infty)\in \rmop{HWF}(e^{-it_0 H}u)$. 
\end{coro}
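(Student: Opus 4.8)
The plan is to obtain the statement by specializing Theorem~\ref{theo_main_rhwf} and Corollary~\ref{coro_cylindrical_homogeneous} to the Euclidean model, so the only real work is to check that $M=\mathbb{R}^n$ with the Euclidean metric, $\Psi(x)=(|x|,x/|x|)$, and $S=S^{n-1}$ fits Assumptions~\ref{assu_manifold_with_end_0}--\ref{assu_higher_derivative}. First I would take $E=\mathbb{R}^n\setminus\{0\}$, so that $M\setminus E=\{0\}$ is compact and $\Psi:E\to\mathbb{R}_+\times S^{n-1}$ is a diffeomorphism, giving Assumption~\ref{assu_manifold_with_end_0}. In polar coordinates $g=\Psi^*\bigl(\diff r^2+r^2 h_{S^{n-1}}(\theta,\diff\theta)\bigr)$, which is Assumption~\ref{assu_manifold_with_end_II} with $c\equiv 1$ and $h(r,\theta,\diff\theta)=r^2 h_{S^{n-1}}(\theta,\diff\theta)$; hence $h^*(r,\theta,\eta)=r^{-2}h^*_{S^{n-1}}(\theta,\eta)$ and $h_0=\tfrac12\bigl(\rho^2+r^{-2}h^*_{S^{n-1}}(\theta,\eta)\bigr)$. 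This is exactly a model manifold in the sense of the Example after Assumption~\ref{assu_classical}, with $f(r)=r$, and I would verify Assumption~\ref{assu_classical} with this $f$ on $[1,\infty)$: $f'/f=r^{-1}$ lies in $[\,c_0 r^{-1},C\,]$ with $c_0=1>1/2$, $C=1$; \eqref{eq_ineq_model_bdd} holds with $C=1$ since $h^*(r,\cdot)=f(r)^{-2}h^*(1,\cdot)$ identically; \eqref{eq_ineq_angular_bdd} and Assumption~\ref{assu_higher_derivative} hold with constants uniform on $[1,\infty)\times T^*S^{n-1}$ because the $\theta$-derivatives act on the compact manifold $S^{n-1}$ while $r$-derivatives of $h^*$ only improve decay; \ref{assu_sub_short_range} of Assumption~\ref{assu_classical} and the $c$-part of Assumption~\ref{assu_higher_derivative} are trivial as $c\equiv 1$; and the classical Mourre estimate \eqref{eq_classical_mourre} is the model Poisson-bracket computation of that Example: with $f(r)=r$ one has $\{r\rho,h_0\}=2h_0\geq 2(h_0-Cr^{-1-\mu})$ for every $\mu>0$. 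Finally \eqref{eq_potential_bdd} is precisely the hypothesis imposed on $V$, and on flat space $-\tfrac12\triangle_g+V=-\tfrac12\triangle+V$, so Theorem~\ref{theo_main_rhwf} applies.

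Next I would compute the asymptotic data of the free orbit. The free Hamiltonian is $h_0(x,\xi)=\tfrac12|\xi|^2$, and its orbit through $(x(0),\xi(0))=(x(0),\xi_\infty)$ is $(x(0)+t\xi_\infty,\xi_\infty)$. Since $(x(0),\xi(0))\in\rmop{WF}(u)\subset T^*\mathbb{R}^n\setminus 0$ we have $\xi_\infty\neq 0$, so $|x(t)|\to\infty$, the orbit is nontrapping, and the asymptotic angle and momentum exist by Theorem~\ref{theo_classical_estimate}. In polar coordinates $x(t)/|x(t)|\to\xi_\infty/|\xi_\infty|=:\theta_\infty$ and the radial momentum $\rho(t)=\xi_\infty\cdot(x(t)/|x(t)|)\to|\xi_\infty|=:\rho_\infty$, while the angular momentum tends to some $\eta_\infty\in T^*_{\theta_\infty}S^{n-1}$ (the conserved angular momentum of the free motion, not necessarily zero); its value will turn out to be irrelevant.

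Then Theorem~\ref{theo_main_rhwf} gives $(x_\ast,\xi_\ast):=\tilde\Psi^{-1}(\rho_\infty t_0,\theta_\infty,\rho_\infty,\eta_\infty)\in\WFrh(e^{-it_0 H}u)$ whenever $(x(0),\xi(0))\in\rmop{WF}(u)$. Here the base point is $x_\ast=\Psi^{-1}(\rho_\infty t_0,\theta_\infty)=\rho_\infty t_0\,\theta_\infty=t_0\xi_\infty$; and, extending the computation recorded just before Corollary~\ref{coro_cylindrical_homogeneous} and using that $\diff r=\diff|x|$ corresponds to the covector $\hat x$, the radial component of $\xi_\ast$ is $\xi_\ast\cdot\hat x_\ast=\rho_\infty$ regardless of $\eta_\infty$, whence $(\xi_\ast\cdot\hat x_\ast)\hat x_\ast=\rho_\infty\hat x_\ast=|\xi_\infty|\,(\xi_\infty/|\xi_\infty|)=\xi_\infty$. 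Applying Corollary~\ref{coro_cylindrical_homogeneous} at $(x_\ast,\xi_\ast)$ — with $\rmop{HWF}$ of the half-density $u=\tilde u\,|\diff x|^{1/2}$ read as $\rmop{HWF}(\tilde u)$ — we conclude $(x_\ast,(\xi_\ast\cdot\hat x_\ast)\hat x_\ast)=(t_0\xi_\infty,\xi_\infty)\in\rmop{HWF}(e^{-it_0 H}u)$, which is the claim.

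No step is deep; the corollary is merely the Euclidean reading of the two previous results. The points needing care are (a) confirming that the one choice $f(r)=r$ makes every inequality of Assumptions~\ref{assu_classical} and \ref{assu_higher_derivative} hold with constants uniform on $[1,\infty)\times T^*S^{n-1}$, which uses compactness of $S^{n-1}$; and (b) tracking the coordinate change $\tilde\Psi$, so that the asymptotic data — base point $\rho_\infty t_0\,\theta_\infty=t_0\xi_\infty$, radial momentum $\rho_\infty=|\xi_\infty|$ — is seen to produce, after the radial projection built into Corollary~\ref{coro_cylindrical_homogeneous}, exactly the Euclidean point $(t_0\xi_\infty,\xi_\infty)$, the (generally nonzero) angular momentum $\eta_\infty$ playing no role.
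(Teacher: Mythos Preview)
Your proposal is correct and follows exactly the approach the paper indicates: the corollary is stated as an immediate consequence of combining Theorem~\ref{theo_main_rhwf} with Corollary~\ref{coro_cylindrical_homogeneous}, and you have simply spelled out the verification of Assumptions~\ref{assu_manifold_with_end_0}--\ref{assu_higher_derivative} for the Euclidean model with $f(r)=r$, together with the computation of the asymptotic data and the coordinate unwinding, that the paper leaves implicit. There is nothing to add.
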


There are many studies on Schr\"odinger equations on manifolds. For example, Schr\"odinger propagator on scattering manifolds are studied by Hassell and Wunsch \cite{Hassell-Wunsch05}. K. Ito and Nakamura \cite{Ito-Nakamura09} generalized the result of \cite{Hassell-Wunsch05}. Microlocal analysis on asymptotically hyperbolic spaces are studied by, for instance, Bouclet \cite{Bouclet11-1,Bouclet11-2}, S\'{a} Barreto \cite{SaBarreto05}, Melrose, S\'{a} Barreto and Vasy \cite{Melrose-SaBaretto-Vasy14}. Our idea of microlocal analysis in polar coordinates is inspired by Bouclet \cite{Bouclet11-1,Bouclet11-2}.

We describe outline of proof of the main Theorem \ref{theo_main_rhwf} in Section \ref{sect_outline_proof}. We reduce the proof of main theorem to three key propositions (Theorem \ref{theo_symbol_aim}, Theorem \ref{theo_hwf_quantization} and Proposition \ref{prop_wf_quantization}) there. In Section \ref{sect_classical_mechanics}, we prove the existence of asymptotic angle and momentum $(\theta_\infty, \rho_\infty, \eta_\infty)$. We develop a pseudodifferential calculus necessary for our aim in Section \ref{sect_psido_manifolds}. In particular, we prove two of key propositions (Theorem \ref{theo_hwf_quantization} and Proposition \ref{prop_wf_quantization}) in Section \ref{subs_quantization_wf_hwf}. Finally, in Section \ref{sect_heisenberg_derivative}, we estimate Heisenberg derivatives of operators constructed in Section \ref{sect_outline_proof} and prove the rest key proposition (Theorem \ref{theo_symbol_aim}). 

\fstep{Notation}For derivatives, we use notations $D_{x_j}:=-i\partial_{x_j}$ and multiindices $D^\alpha:=D_{x_1}^{\alpha_1}\cdots D_{x_n}^{\alpha_n}$. We also denote $\jbracket{x}:=\sqrt{1+|x|^2}$. As in the definition \eqref{eq_defi_psi_lift} of $\tilde\Psi$ , for a diffeomorphism $\varphi: U\to V$, we denote the canonical mapping associated with $\varphi$ by $\tilde\varphi: T^*U \to T^*V$, $\tilde\varphi (x, \xi):=(\varphi (x), d\varphi^{-1}(\varphi (x))^* \xi)$. 

\section{Outline of proof}\label{sect_outline_proof}

We prove our main theorem by following the argument in Nakamura \cite{Nakamura05}. 

We construct symbols connecting wave functions at time $t=0$ and those at time $t=t_0$ by the following procedure. 

\initstep\step\label{step_construction} We take a function $\chi\in C_c^\infty ([0, \infty))$ such that $\chi\geq 0$, $\chi^\prime (x)\leq 0$ and 
\[
    \chi(x)=
    \begin{cases}
        1 & \text{if } x\leq 1, \\
        0 & \text{if } x\geq 2. 
    \end{cases}\]
Since $\chi$ is constant near $x=0$, $\chi$ belongs to $C^\infty(\mathbb{R})$. Take polar coordinates $\varphi: U\to V$ near $\Psi^{-1}(r_0, \theta_\infty)$ where $r_0\gg 1$. We take sufficiently small constants $\delta_0, \delta_1, \delta_2, \ldots>0$ and $\lambda \in (0, 1]$ and consider 
\begin{equation}\label{eq_defi_chi4}
    \underbrace{\chi\left(\frac{|r-r(t)|}{4\delta_j t}\right)}_{=:\chi_{1j}} 
    \underbrace{\chi\left(\frac{|\theta-\theta(t)|}{\delta_j-t^{-\lambda}}\right)}_{=:\chi_{2j}}
    \underbrace{\chi\left(\frac{|\rho-\rho(t)|}{\delta_j-t^{-\lambda}}\right)}_{=:\chi_{3j}}
    \underbrace{\chi\left(\frac{|\eta-\eta(t)|}{\delta_j -t^{-\lambda}} \right)}_{=:\chi_{4j}} 
\end{equation}
for sufficiently large $t$. Denote the range of $t$ by $[T_0, \infty)$ with a sufficiently large $T_0>0$. We pull $\chi_{1j}\chi_{2j}\chi_{3j} \chi_{4j}$ back by the canonical coordinates $\tilde\varphi: T^*U\to T^*V$ induced by $\varphi: U\to V$ and define 
\begin{equation} \label{eq_defi_psi-1}
    \tilde\psi_j(t, x, \xi):=\tilde\varphi^*(\chi_{1j} \chi_{2j} \chi_{3j} \chi_{4j})\in C^\infty (T^*U; \mathbb{R}). 
\end{equation}
Since the support of $\tilde\psi_j$ are included in $T^*U$, we extend $\tilde\psi_j$ to a smooth function on $T^*M$ by defining $\tilde\psi_j=0$ outside $T^*U$. 

\step We take a cutoff function $\alpha\in C^\infty (\mathbb{R})$ which satisfies $\alpha^\prime\geq 0$ and
\[\alpha(t)=
\begin{cases}
    0 & \text{if } t\leq T_0, \\
    1 & \text{if } t\geq T_0+1. 
\end{cases}\]
We define $\psi_0(t, x, \xi)$ as a solution to a transport equation
\begin{equation}\label{eq_transport}
    \begin{split}
        &\partial_t \psi_j +\{ \psi_j, h_0\}=\alpha(t)(\partial_t \tilde\psi_j +\{ \tilde\psi_j, h_0\}), \\
        &\psi_j(T_0+1, x, \xi)=\tilde\psi_j(T_0+1, x, \xi). 
    \end{split}
    \end{equation}
    
\step We choose positive constants $c_1, c_2, \ldots$ and construct a symbol $\tilde a(\hbar, t, r, \theta, \rho, \eta)$ such that 
\begin{equation}\label{eq_definition_symbol}
    \tilde a(\hbar, t)\sim \sum_{j=1}^\infty c_j\hbar^j t \psi_j(\hbar, t)
\end{equation}
    by the Borel theorem. We quantize symbols $\psi_0(\hbar^{-1}t)$ and $\tilde a(\hbar; \hbar^{-1}t)$ (the procedure of quantization is in Definition \ref{defi_quantization}) and obtain quantized operators $\Oph \psi_0(\hbar^{-1}t)$ and $\Oph (\tilde a(\hbar, \hbar^{-1}t))$. We define an operator $A_\hbar (t)$ as 
\begin{equation}\label{eq_defi_op_at}
     A_\hbar (t):=\Oph (\psi_0(\hbar^{-1}t))^*\Oph (\psi_0(\hbar^{-1}t))+\Oph (\tilde a(\hbar, \hbar^{-1}t)). 
\end{equation}

Now we state two key lemmas for the proof of the main theorem (Theorem \ref{theo_main_rhwf}). The first lemma states the positivity of the Heisenberg derivative modulo $O(\hbar^\infty)$ of the time-dependent operator $A_\hbar (t)$. 

\begin{theo}\label{theo_symbol_aim}
If we take suitable $\delta_0, \delta_1, \delta_2, \ldots>0$, $\lambda \in (0, 1]$, $T_0>0$ and $c_1, c_2, \ldots>0$ in above construction procedure, then the following statements hold. 
\begin{enumerate}[label=(\roman*)]
    \item $a(\hbar, t)\in S^{-2}_\mathrm{cyl}(T^*M)$ and forms a bounded family in $S^{-2}_\mathrm{cyl}(T^*M)$. 
    \item For any $k\geq 0$, the inequality
    \begin{equation}\label{eq_positive_heisenberg_derivative}\partial_t A_\hbar (t)-i[A_\hbar (t), H]\geq O_{L^2\to L^2}(\hbar^k)
    \end{equation}
    holds uniformly in $0\leq t \leq t_0$. 
\end{enumerate}
\end{theo}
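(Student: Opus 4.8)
The plan is to prove the two assertions separately, with the symbol estimate (i) serving as a prerequisite for the operator inequality (ii).

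For part (i), I would track how the construction in Steps 1--3 produces $\tilde a(\hbar, t)$. Each $\tilde\psi_j$ is, by \eqref{eq_defi_psi-1}, a pullback under $\tilde\varphi$ of a product of four cutoffs of the form $\chi(|\cdot - \cdot(t)|/(\text{small}))$; since $\chi$ is compactly supported, $\tilde\psi_j$ has compact support in $(r,\theta,\rho,\eta)$-space \emph{uniformly for $t$ in bounded intervals}, hence lies in $S^0_\mathrm{cyl}(T^*M)$ with bounds uniform in $t\in[T_0,T_0+t_0/\hbar]$ after the rescaling $t\mapsto \hbar^{-1}t$. The transport equation \eqref{eq_transport} propagates these bounds: since $h_0\in S^2_\mathrm{cyl}$ and the classical flow preserves (by Theorem \ref{theo_classical_estimate}, the nontrapping/asymptotic-completeness statement) the relevant support structure, $\psi_j(\hbar^{-1}t)$ remains a bounded family in $S^0_\mathrm{cyl}$. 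The key quantitative point is the factor $\hbar^j t$ in \eqref{eq_definition_symbol}: when $t=\hbar^{-1}s$ with $s\in[0,t_0]$ this is $\hbar^{j-1}s$, so the $j$-th term gains $\hbar^{j-1}$. The asymptotic sum $\tilde a\sim\sum c_j \hbar^j t\psi_j$ is then, by Borel summation, a symbol whose leading term is $O(\hbar)$; the claimed membership in $S^{-2}_\mathrm{cyl}$ (rather than merely $S^0$) must come from an additional gain built into the $\psi_j$ themselves — presumably each $\tilde\psi_j$ is arranged so that its quantization lands in $S^{-2}_\mathrm{cyl}$ because of the $\hbar$-weights in the quantization of Definition \ref{defi_quantization}, so I would verify this by unwinding that definition. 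I would also confirm that $\Oph(\psi_0)^*\Oph(\psi_0)$ contributes an $S^0_\mathrm{cyl}$ (indeed compactly microsupported, uniformly bounded) operator, so the full $A_\hbar(t)$ has the stated symbol-class membership.

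For part (ii), the Heisenberg derivative $\partial_t A_\hbar(t) - i[A_\hbar(t),H]$ is computed by the symbolic calculus: modulo lower-order errors it is $\Oph$ applied to $\partial_t \sigma + \{\sigma, h_0\} + (\text{potential and subprincipal terms})$, where $\sigma$ is the full symbol of $A_\hbar$. The design of the transport equation \eqref{eq_transport} is exactly so that $\partial_t\psi_j + \{\psi_j,h_0\}$ equals the \emph{localization error} $\alpha(t)(\partial_t\tilde\psi_j + \{\tilde\psi_j,h_0\})$, which is supported where one of the four cutoff arguments has its derivative nonzero — i.e.\ on an annular/shell region in phase space. The crucial mechanism, following Nakamura, is that on the $\chi_{2j},\chi_{3j},\chi_{4j}$ shells (angle/momentum spreading of width $\delta_j - t^{-\lambda}$) the classical Mourre estimate \eqref{eq_classical_mourre} together with the convergence rates from Theorem \ref{theo_classical_estimate} forces these error terms to decay like a large negative power of $t$, hence like $\hbar^N$ after $t\mapsto \hbar^{-1}t$; and on the $\chi_{1j}$ shell (radial width $4\delta_j t$) one uses that the radial velocity $\dot r = \partial_\rho h_0$ is bounded below along the orbit so the leading transport term is \emph{favorably signed}. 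The weights $c_j\hbar^j t$ are then chosen recursively: $c_1$ absorbs the sign-indefinite part of the $j=0$ (i.e. $\Oph(\psi_0)^*\Oph(\psi_0)$) Heisenberg derivative into the good term coming from $c_1\hbar\, t\,(\partial_t\psi_1+\{\psi_1,h_0\})$, then $c_2$ absorbs the residual $O(\hbar^2 t)$ term, and so on, each step invoking a sharp Gårding inequality (valid because the symbols are real and the leading quantity is nonnegative on the relevant support) to pass from symbol positivity to operator positivity modulo $O(\hbar^{k+1})$. Since only finitely many steps are needed to reach a given $k$, this terminates.

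The main obstacle I anticipate is the bookkeeping in part (ii): making precise the telescoping/recursive choice of the $\delta_j$ and $c_j$ so that at each stage the ``bad'' term produced by quantizing $\partial_t\psi_{j} + \{\psi_j,h_0\}$ (including subprincipal/potential contributions, for which \eqref{eq_potential_bdd} and the short-range bounds in Assumption \ref{assu_classical}(v) and Assumption \ref{assu_higher_derivative} are needed) is dominated by the ``good'' term from the next weight, \emph{uniformly} over $t\in[0,t_0]$ after rescaling. In particular one must check that the shrinking localization width $\delta_j - t^{-\lambda}$ stays positive on the whole time interval (forcing constraints relating $\lambda$, $T_0$ and the $\delta_j$) and that the commutator expansion in the $S^m_\mathrm{cyl}$ calculus genuinely gains a power of $\hbar$ at each order despite the degenerate ($r\to\infty$, hyperbolic-type) geometry — this is where Assumption \ref{assu_higher_derivative} and the structure of the cylindrical symbol class do the real work. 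The sign of the radial transport term on the $\chi_{1j}$ shell (using $\chi'\le 0$ and $\dot r>0$) is the one place where the argument is genuinely a positivity argument rather than an error estimate, and getting that sign right while simultaneously controlling the angular shells is the delicate point.
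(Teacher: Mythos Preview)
Your overall plan (transport equation plus recursive sharp G\aa rding, following Nakamura) is the right one, but the mechanism you describe in part (ii) is miscalibrated in two essential ways, and as written the argument would not close.

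First, the Lagrange derivative $\partial_t\tilde\psi_j+\{\tilde\psi_j,h_0\}$ does \emph{not} decay like a large negative power of $t$ on the angular/momentum shells; it decays only like $\jbracket{t}^{-1}$ there (this is Lemma~\ref{lemm_positive_lagrange_derivative} in the paper). What is true---and what you must prove shell by shell---is that $\partial_t\tilde\psi_j+\{\tilde\psi_j,h_0\}\geq 0$ on \emph{every} shell, not just the radial one. On the $\chi_{2j},\chi_{3j},\chi_{4j}$ shells the positivity comes from the \emph{expanding} width $\delta_j-t^{-\lambda}$: differentiating this width contributes $+\lambda t^{-\lambda-1}|\chi'|/(\delta_j-t^{-\lambda})^2$, which dominates the drift term because the convergence rates of Theorem~\ref{theo_classical_estimate} (and the decay $h^*(r,\theta,\eta)\lesssim f(r)^{-2}$) make the drift $O(t^{-1-\min\{\mu,2c_0-1\}})$, and one chooses $0<\lambda<\min\{\mu,2c_0-1\}$. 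So the dichotomy ``radial shell: positivity / other shells: rapid decay'' is wrong; it is ``all shells: positivity, with uniform $\jbracket{t}^{-1}$ size''.

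Second, the term that actually absorbs the error from the previous step is not $c_j\hbar^j t(\partial_t\psi_j+\{\psi_j,h_0\})$ but the contribution $c_j\hbar^j\Oph(\psi_j)$ coming from differentiating the explicit factor $t$. Writing $F_k(t)=\Oph(\psi_0)^*\Oph(\psi_0)+t\sum_{j=1}^k c_j\hbar^j\Oph(\psi_j)$, the Heisenberg derivative of the $j$-th correction splits as
\[
c_j\hbar^j\Oph(\psi_j)\;+\;c_j\hbar^j t\bigl(\partial_t\Oph(\psi_j)-i\hbar[\Oph(\psi_j),H]\bigr).
\]
The first piece is used to dominate the Step-$(j{-}1)$ error symbol $b_{j-1}$, which is possible because $\delta_{j}>\delta_{j-1}$ forces $\psi_j\equiv 1$ on $\rmop{supp}b_{j-1}$; one then picks $c_j$ large and applies sharp G\aa rding. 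The second piece has nonnegative principal symbol (by the paragraph above) and, crucially, is $O(1)$ in $S^0_{\mathrm{cyl}}$ because the $\jbracket{t}^{-1}$ decay exactly cancels the factor $t$; G\aa rding then produces only an $O(\hbar^{j+1})$ error, which is fed to the next step. If the decay on the non-radial shells were the source of the $\hbar$-gain, as you suggest, the factor $t$ in the correction would be unnecessary and the nesting $\delta_0<\delta_1<\cdots<2\delta_0$ would play no role---both are in fact essential. Your remark that the residual after step $1$ is ``$O(\hbar^2 t)$'' is also off for the same reason: it is $O(\hbar^2)$ uniformly in $t$, which is what allows the induction to proceed.

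For part (i) your instinct that something extra must be checked to land in $S^{-2}_{\mathrm{cyl}}$ is a red herring: the $\psi_j$ are compactly supported in $(r,\theta,\rho,\eta)$ (uniformly after rescaling, by Lemma~\ref{lemm_estimate_tildepsi}), hence lie in $S^{-\infty}_{\mathrm{cyl}}$; no ``$\hbar$-weight in the quantization'' is involved.
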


The second lemma states that the operator which appeared in the definition of radially homogeneous wavefront sets (Definition \ref{defi_hwf_manifolds}) is approximated by $A_\hbar (t)$. 

\begin{theo}\label{theo_hwf_quantization}
    Let polar coordinates $\varphi: U\to V$ in $E$, $a\in C_c^\infty (T^*V)$ and a cylindrical function $\chi\in C^\infty (M)$ satisfy conditions in Definition \ref{defi_hwf_manifolds} except for \eqref{eq_hwf_manifold_definition}. Then, if we take sufficiently small $\delta_0, \delta_1, \cdots >0$ in Step \ref{step_construction} properly, then we have
    \[
        A_\hbar (t_0)-A_\hbar (t_0)\chi \varphi^* a^\mathrm{w}(\hbar r, \theta, \hbar D_r, \hbar D_\theta)(\varphi_*\chi)\varphi_*=O_{L^2\to L^2}(\hbar^\infty). 
    \]
\end{theo}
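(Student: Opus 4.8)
The plan is to show that the piece $\Oph(\psi_0(\hbar^{-1}t_0))^*\Oph(\psi_0(\hbar^{-1}t_0))$ already carries all the microlocal content of $A_\hbar(t_0)$ at $t=t_0$ near the target point, so that composing $A_\hbar(t_0)$ with the cutoff operator $\chi\varphi^* a^\mathrm{w}(\hbar r,\theta,\hbar D_r,\hbar D_\theta)(\varphi_*\chi)\varphi_*$ changes nothing modulo $O(\hbar^\infty)$. First I would unwind the construction in Step \ref{step_construction}: at time $t=t_0$ the symbols $\psi_j(\hbar^{-1}t_0)$ and $\tilde\psi_j(\hbar^{-1}t_0)$ agree (since $t_0\le T_0+1$ is false in general — rather one must note that the rescaled time $\hbar^{-1}t_0$ is \emph{large} as $\hbar\to 0$, so the cutoff $\alpha(\hbar^{-1}t_0)=1$ and hence, by \eqref{eq_transport}, $\psi_j(\hbar^{-1}t_0)$ coincides with $\tilde\psi_j(\hbar^{-1}t_0)$, which is localized near $(r(\hbar^{-1}t_0),\theta(\hbar^{-1}t_0),\rho(\hbar^{-1}t_0),\eta(\hbar^{-1}t_0))$ in the $\tilde\varphi$-coordinates, of width $O(\delta_j \hbar^{-1}t_0)$ in $r$ and $O(\delta_j)$ in the remaining variables). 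Using the existence of asymptotic angle and momentum (Theorem \ref{theo_classical_estimate}), the classical orbit satisfies $r(\hbar^{-1}t_0)=\rho_\infty \hbar^{-1}t_0+O(1)$, $\theta(\hbar^{-1}t_0)\to\theta_\infty$, $\rho(\hbar^{-1}t_0)\to\rho_\infty$, $\eta(\hbar^{-1}t_0)\to\eta_\infty$; so in the rescaled variables $(\hbar r,\theta,\hbar\rho,\hbar\eta)$ relevant to Definition \ref{defi_hwf_manifolds}, the symbols $\psi_j(\hbar^{-1}t_0)$ concentrate near $(\rho_\infty t_0,\theta_\infty,\rho_\infty,\eta_\infty)=\tilde\Psi^{-1}(\cdot)$'s coordinates, which is precisely the point where $a=1$.

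Next I would express everything in the semiclassical calculus of Section \ref{sect_psido_manifolds}. The operator $\chi\varphi^* a^\mathrm{w}(\hbar r,\theta,\hbar D_r,\hbar D_\theta)(\varphi_*\chi)\varphi_*$ is, up to the cylindrical cutoffs $\chi$, the $\Oph$-quantization of the symbol $a(\hbar r,\theta,\hbar\rho,\hbar\eta)$, which equals $1$ on a neighborhood of the concentration point of each $\psi_j(\hbar^{-1}t_0)$ once $\delta_0,\delta_1,\dots$ are chosen small enough (this is where the freedom in the $\delta_j$'s is used: shrinking them shrinks the support of $\tilde\psi_j$ in the rescaled picture, forcing it inside $\{a=1\}$). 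The product rule in the cylindrical symbol class then gives, for the symbol $b$ of $\chi\varphi^*a^\mathrm{w}(\cdots)(\varphi_*\chi)\varphi_*$,
\[
\Oph(\psi_j(\hbar^{-1}t_0))\,\bigl(I-\Oph(b)\bigr)=\Oph\bigl(\psi_j(\hbar^{-1}t_0)\#(1-b)\bigr)+O(\hbar^\infty),
\]
and since $\psi_j(\hbar^{-1}t_0)$ and $1-b$ have disjoint supports (the former sits in $\{a=1\}\subset\{b=1\}$ after accounting for the cutoffs $\chi$, which equal $1$ near $\theta_\infty$ by hypothesis), the composed symbol is $O(\hbar^\infty)$ in the relevant class. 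Summing the asymptotic series \eqref{eq_definition_symbol} for $\tilde a$ via Borel, together with the analogous statement for the leading term $\Oph(\psi_0(\hbar^{-1}t_0))^*\Oph(\psi_0(\hbar^{-1}t_0))$ (which is $\Oph$ of $|\psi_0(\hbar^{-1}t_0)|^2$ plus lower order, all with the same support properties), yields $A_\hbar(t_0)(I-\Oph(b))=O_{L^2\to L^2}(\hbar^\infty)$, and one checks $\Oph(b)$ differs from $\chi\varphi^*a^\mathrm{w}(\hbar r,\theta,\hbar D_r,\hbar D_\theta)(\varphi_*\chi)\varphi_*$ only by $O(\hbar^\infty)$ by the quantization-comparison results of Section \ref{sect_psido_manifolds}.

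The main obstacle I anticipate is bookkeeping the disjoint-support argument \emph{uniformly in $\hbar$} once the rescaling $x\mapsto\hbar r$ is in force: the supports of $\psi_j(\hbar^{-1}t_0)$ in the $r$-variable are $O(\delta_j\hbar^{-1})$-wide and centered at $O(\hbar^{-1})$, so disjointness from $\{b\neq 1\}$ must be read off in the rescaled variable $\hbar r$, where the width becomes $O(\delta_j)$ and the separation from $\{a\neq1\}$ must be bounded below independently of $\hbar$; this requires the quantitative control on $r(\hbar^{-1}t_0)-\rho_\infty\hbar^{-1}t_0$, $\theta(\hbar^{-1}t_0)-\theta_\infty$, etc., from Theorem \ref{theo_classical_estimate}, including the rate of convergence, and care that the $t^{-\lambda}$-shrinking in \eqref{eq_defi_chi4} (evaluated at $t=\hbar^{-1}t_0$, hence $O(\hbar^\lambda)\to0$) does not spoil the argument — it only helps. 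A secondary technical point is that the cylindrical cutoffs $\chi$ appearing on both sides must be arranged so that $\Psi_*\chi=1$ on the angular support of all the $\psi_j$, which is possible because they all concentrate at $\theta_\infty$; this is exactly the compatibility built into the hypotheses "except for \eqref{eq_hwf_manifold_definition}."
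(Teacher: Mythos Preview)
Your proposal is correct and follows essentially the same approach as the paper's proof: convert the cutoff operator $\chi\varphi^* a^\mathrm{w}(\hbar r,\theta,\hbar D_r,\hbar D_\theta)(\varphi_*\chi)\varphi_*$ into an $\Oph$-quantization via Lemma~\ref{lemm_local_quantization}, then use the composition formula (Theorem~\ref{theo_psido_composition}) together with a disjoint-support argument, made possible by shrinking the $\delta_j$, to kill $A_\hbar(t_0)$ against the remainder. One small slip: the relevant rescaling in Definition~\ref{defi_hwf_manifolds} is only in the radial variable, so the symbol of $a^\mathrm{w}(\hbar r,\theta,\hbar D_r,\hbar D_\theta)$ is $a(\hbar r,\theta,\rho,\eta)$, not $a(\hbar r,\theta,\hbar\rho,\hbar\eta)$; your support analysis is nonetheless carried out for the correct target point $(\rho_\infty t_0,\theta_\infty,\rho_\infty,\eta_\infty)$, so the argument goes through unchanged once the notation is fixed.
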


In addition to the above key lemmas, we state a technical lemma on the wavefront sets in order to describe the proof of the main theorem briefly. 

\begin{prop}\label{prop_wf_quantization}
    Let $(x_0, \xi_0)\in T^*M\setminus 0$ and $u\in L^2(M; \Omega^{1/2})$. If there exists a symbol $a\in C_c^\infty (T^*M)$ such that $a=1$ near $(x_0, \xi_0)$ and $\Oph (a)u=O_{L^2}(\hbar^\infty)$, then $(x_0, \xi_0)\not\in \rmop{WF}(u)$. 
\end{prop}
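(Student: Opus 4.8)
\emph{Proof proposal.} The plan is to derive the local, chart‑wise condition defining $\rmop{WF}(u)$ from the global hypothesis $\Oph(a)u=O_{L^2}(\hbar^\infty)$, using only two standard features of the calculus developed in Section \ref{sect_psido_manifolds}: uniform (in $\hbar$) $L^2$‑boundedness of the operators involved, and the disjoint‑support principle. The latter I will use in the form: if $c=1$ on a neighborhood of $\mathrm{supp}\,b$, then $\Oph(b)\Oph(c)=\Oph(b)+O_{L^2\to L^2}(\hbar^\infty)$, because in the symbol expansion of the composition the term of order $0$ is $bc=b$, while every term of positive order carries a derivative of $c$ evaluated on $\mathrm{supp}\,b$, where $c$ is locally constant, hence vanishes.

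First I would localize. Since $a=1$ near $(x_0,\xi_0)$ and $(x_0,\xi_0)\in T^*M\setminus 0$, pick local coordinates $\varphi:U\to V$ with $x_0\in U$ together with a symbol $b\in C_c^\infty(T^*M)$ such that $\mathrm{supp}\,b\subset T^*U$, $b=1$ near $(x_0,\xi_0)$, and $a=1$ on a neighborhood of $\mathrm{supp}\,b$. By the disjoint‑support principle, $\Oph(b)\Oph(a)=\Oph(b)+O_{L^2\to L^2}(\hbar^\infty)$, so, using the hypothesis and the uniform boundedness of $\Oph(b)$,
\[
    \Oph(b)u=\Oph(b)\Oph(a)u+O_{L^2}(\hbar^\infty)=O_{L^2}(\hbar^\infty).
\]

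Next I would produce the data required by the definition of $\rmop{WF}(u)$. Choose $\chi\in C_c^\infty(U)$ with $\chi=1$ near $x_0$ and $a'\in C_c^\infty(T^*M)$ with $\mathrm{supp}\,a'\subset T^*U$ and $a'=1$ near $(x_0,\xi_0)$, both taken with support so small that $\mathrm{supp}(\chi^2 a')$ is contained in the set where $b=1$. Set $T_\hbar u:=\chi\varphi^*(\tilde\varphi_*a')^\mathrm{w}(x,\hbar D)\varphi_*(\chi u)$. By the change‑of‑quantization formula for half‑densities (Section \ref{subs_psido_half_densities}) together with the composition calculus, $T_\hbar$ coincides, modulo $O_{L^2\to L^2}(\hbar^\infty)$, with an operator in the global calculus whose symbol is supported in a neighborhood of $\mathrm{supp}(\chi^2 a')$, hence in $\{b=1\}$. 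The disjoint‑support principle (applied now with $T_\hbar$ on the left) gives $T_\hbar\Oph(b)=T_\hbar+O_{L^2\to L^2}(\hbar^\infty)$, and combining with the previous display and the uniform boundedness of $T_\hbar$,
\[
    \|T_\hbar u\|_{L^2}=\|T_\hbar\Oph(b)u\|_{L^2}+O(\hbar^\infty)=O(\hbar^\infty),
\]
which is exactly the statement $(x_0,\xi_0)\notin\rmop{WF}(u)$.

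The main obstacle is bookkeeping rather than conceptual: one must carefully match the global quantization $\Oph$ of Definition \ref{defi_quantization} (built in polar coordinates via a partition of unity and acting on half‑densities) with the chart‑wise Weyl quantization used in the definition of $\rmop{WF}$, so that both the vanishing of disjoint‑support compositions and the claim that $T_\hbar$ lies in the global calculus with symbol supported near $\mathrm{supp}(\chi^2 a')$ are legitimate. Once the calculus of Section \ref{sect_psido_manifolds} supplies (i) uniform $L^2$‑boundedness, (ii) the composition formula with its disjoint‑support remainder, and (iii) the change‑of‑quantization statement placing a cutoff‑squeezed Weyl operator inside the global calculus, the argument above closes without further computation.
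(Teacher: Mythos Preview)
Your proposal is correct and follows essentially the same approach as the paper: represent the chart--wise operator as a global $\Oph(c)$ with symbol supported (mod $O(\hbar^\infty)$) where $a=1$, then use the composition formula/disjoint--support principle to absorb $\Oph(a)$. The paper does this in one step---it writes $\chi\varphi^*b^\mathrm{w}\varphi_*\chi=\Oph(c)+O(\hbar^\infty)$ via Lemma~\ref{lemm_local_quantization} and then splits $\Oph(c)=\Oph(c)\Oph(a)+\Oph(c)\Oph(1-a)$---so your intermediate global symbol $b$ is a harmless redundancy rather than a different idea.
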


We can prove Theorem \ref{theo_main_rhwf} from Theorem \ref{theo_symbol_aim}, Proposition \ref{prop_wf_quantization} and Theorem \ref{theo_hwf_quantization} as follows. 

\begin{proof}[Proof of Theorem \ref{theo_main_rhwf}]
Assume $(\rho_\infty t_0, \theta_\infty, \rho_\infty, \eta_\infty)\not\in \WFrh (e^{-it_0H}u)$. By considering $\partial_t (e^{itH}A_\hbar (t)e^{-itH})=e^{itH}(\partial_t A_\hbar (t)-i[A_\hbar (t), H])e^{-itH}$, we have 
\begin{equation}\label{eq_expectation_estimate} 
    \begin{split}
    \jbracket{A_\hbar (0)u, u}
    &=\jbracket{A_\hbar (t_0)e^{-it_0H}u, e^{-it_0H}u} \\
    &\quad -\int_0^{t_0} \jbracket{(\partial_t A_\hbar (s)-i[A_\hbar (s), H])e^{-isH}u, e^{-isH}u}\, \diff s \\
    &\leq \jbracket{A_\hbar (t_0)e^{-it_0H}u, e^{-it_0H}u}+O(\hbar^k)
    \end{split}
\end{equation}
for any $k\geq 0$ by \eqref{eq_positive_heisenberg_derivative}. We consider 
$\jbracket{A_\hbar (0)u, u}$ and $\jbracket{A_\hbar (t_0)e^{-it_0H}u, e^{-it_0H}u}$ respectively. 

\fstep{$\bm{\jbracket{A_\hbar (0)u, u}}$}Since $A_\hbar(0)=\Oph (\psi_0(0))^*\Oph (\psi_0(0))$ by \eqref{eq_definition_symbol}, we have 
\begin{equation}\label{eq_initial_wf}
     \jbracket{A_\hbar (0)u, u}=\|\Oph (\psi_0(0))u\|_{L^2}^2. 
\end{equation}
    
\fstep{$\bm{\jbracket{A_\hbar (t_0)e^{-it_0H}u, e^{-it_0H}u}}$}We set $(x_\infty, \xi_\infty):=\tilde\Psi^{-1} (\rho_\infty t_0, \theta_\infty, \rho_\infty, \eta_\infty)$. Since $(x_\infty, \xi_\infty)\not\in \WFrh (e^{-it_0 H}u)$, there exist a polar coordinate $\varphi: U\to V$, cylindrical cutoff $\chi\in C^\infty (M)$ and $a\in C_c^\infty (T^*V)$ which satisfy the conditions in Definition \ref{defi_hwf_manifolds}. 
We put $B_\hbar =\chi\varphi^*b^\mathrm{w}(\hbar r, \theta, \hbar D_r, \hbar D_\theta)(\varphi_*\chi)\varphi_*$. By Theorem \ref{theo_hwf_quantization}, we take sufficiently small $\delta_0, \delta_1, \cdots>0$ properly such that $A_\hbar (t_0)-A_\hbar (t_0)B_\hbar=O_{L^2\to L^2}(\hbar^\infty)$. Since $B_\hbar e^{-it_0 H}u=O_{L^2}(\infty^\infty)$ by the definition of radially homogeneous wavefront sets (Definition \ref{defi_hwf_manifolds}), we have 
\begin{equation}
    \label{eq_at0_neg}
    A_\hbar (t_0)e^{-it_0 H}u=O_{L^2}(\hbar^\infty). 
\end{equation}

\fstep{Conclusion}Combining \eqref{eq_initial_wf}, \eqref{eq_expectation_estimate} and \eqref{eq_at0_neg}, we have 
\[
    \| \Oph (\psi_0(0))u\|_{L^2}^2\leq O(\hbar^\infty). 
\]
We recall that $(x(t), \xi(t))$ is a solution to Hamilton equation with respect to the free Hamiltonian \eqref{eq_free_hamiltonian}. Then 
\[
    \frac{\diff}{\diff t}(\psi_{-1}(t, x(t), \xi(t)))=(\partial_t \psi_{-1} +\{ \psi_{-1}, h_0\})(t, x(t), \xi(t)). 
\]
In particular $\psi_{-1}(t, x(t), \xi(t))=1$ for $t\geq T_0$ in this case. Thus 
\[
    (\partial_t \psi_{-1} +\{ \psi_{-1}, h_0\})(t, x(t), \xi(t))=0, \quad \forall t\geq T_0. 
\]
Hence $\psi_0(t, x(t), \xi(t))$ is a solution to the initial value problem $\partial_t (\psi_0(t, x(t), \xi(t)))=0$, $\psi_0(T_0+1, x(T_0+1), \xi(T_0+1))=1$. Thus $\psi_0(t, x(t), \xi(t))=1$ for all $t\geq T_0$. In particular we have $\psi_0(0, x(0), \xi(0))=1$. Since flows are families of diffeomorphisms generally, we have $\psi_0(0, x, \xi)=1$ near $(x(0), \xi(0))$. Hence by Proposition \ref{prop_wf_quantization}, we obtain $(x_0, \xi_0)\not\in\rmop{WF} (e^{-it_0 H}u)$. 
\end{proof}

\section{Classical mechanics}\label{sect_classical_mechanics}

The only purpose of this section is the proof of the existence of asymptotic angle and momentum: 

\begin{theo}\label{theo_classical_estimate}
    Assume Assumption \ref{assu_classical}. Let $\Psi^{-1}(r(t), \theta(t), \rho(t), \eta(t))$ be a nontrapping classical orbit (i.e., $r(t)\to\infty$ as $t\to \infty$) with respect to the free Hamiltonian \eqref{eq_free_hamiltonian}. Then the asymptotic angle and momentum $(\rho_\infty, \theta_\infty, \eta_\infty):=\lim_{t\to\infty} (\rho(t), \theta(t), \eta(t))\in (0, \infty)\times T^*S$ exists. Moreover, there exists a constant $C>0$ such that the estimate 
    \[ \rho_\infty t-C \leq r(t) \leq \rho_\infty t+C\]
    holds for all sufficiently large $t$. 
\end{theo}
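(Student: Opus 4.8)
The plan is to run a classical Mourre argument at the level of the $h_0$-flow, in the spirit of the quantum Mourre theory. First I would exploit conservation of energy: $h_0(x(t),\xi(t))\equiv E$ for some $E\geq0$, and in fact $E>0$, since $E=0$ would force $\rho(t)\equiv0$ and $h^*(r(t),\theta(t),\eta(t))\equiv0$, hence $\dot r=c^{-2}\rho\equiv0$ by Hamilton's equations, contradicting $r(t)\to\infty$. From $2E=c^{-2}\rho^2+h^*$ and $c\to1$ (a short-range consequence of Assumption~\ref{assu_classical}\ref{assu_sub_short_range}) one gets that $\rho(t)$ is bounded with $\limsup|\rho(t)|\leq\sqrt{2E}$, and $0\leq h^*(r(t),\theta(t),\eta(t))\leq2E$; integrating \eqref{eq_ineq_f_logbdd} gives $f(r)\geq f(1)r^{c_0}$ for $r\geq1$, so $f(r(t))\to\infty$. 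Then comes the monotonicity step: set $b(t):=f(r(t))\rho(t)$, so $\dot b=\{f\rho,h_0\}(x(t),\xi(t))\geq2f'(r(t))(E-Cr(t)^{-1-\mu})\geq f'(r(t))E>0$ once $r(t)$ is large, by \eqref{eq_classical_mourre} and $f'>0$. Hence $b$ is strictly increasing on a half-line. This first yields $\rho(t)>0$ for large $t$ (if $b\leq0$ on a tail, then $\rho\leq0$, so $\dot r\leq0$, contradicting nontrapping); then, using $\dot b\geq f'(r(t))E$ together with $f'(r(t))=\tfrac{1}{2b(t)}\tfrac{d}{dt}[f(r(t))^2]$ and monotonicity of $b$, one compares $b(t)$ with $f(r(t))$ and concludes $b(t)\gtrsim f(r(t))$, i.e. $\liminf_{t\to\infty}\rho(t)>0$. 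Therefore $\dot r$ is bounded below by a positive constant for large $t$, $r(t)\asymp t$, and $\int^\infty r(t)^{-1-\mu}\diff t<\infty$.

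The core is showing $h^*(r(t),\theta(t),\eta(t))\to0$ — equivalently, by $2E=c^{-2}\rho^2+h^*$ and $c\to1$, that $\rho(t)\to\rho_\infty:=\sqrt{2E}$. The mechanism is near-conservation of $w(t):=f(r(t))^2h^*(r(t),\theta(t),\eta(t))$, which is exactly conserved in the model cases $g=\diff r^2+f(r)^2h(\theta,\diff\theta)$. I would compute $\tfrac{d}{dt}h^*$ along the flow by the chain rule; the ``angular'' Poisson-bracket terms cancel, leaving $\tfrac{d}{dt}h^*=c^{-2}\rho\,\partial_r h^*-\tfrac12\rho^2\,\nabla_\eta h^*\cdot\nabla_\theta(c^{-2})$. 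Read as an inequality between functions (specialized at $\rho=0$, then corrected for $\rho\neq0$ with the short-range bound $|\partial_r c|\lesssim r^{-1-\mu}$), \eqref{eq_classical_mourre} forces $\partial_r h^*\leq-2\tfrac{f'}{f}h^*+O(r^{-1-\mu})$; combined with Assumption~\ref{assu_classical}(ii)--(iii) and Assumption~\ref{assu_higher_derivative} — which give $|\nabla_\eta h^*|\lesssim f(r)^{-1}\sqrt{h^*}$ and $|\nabla_\theta(c^{-2})|\lesssim r^{-1-\mu}$ — this yields a differential inequality $\tfrac{d}{dt}w\leq C\big(f(r(t))^2+\sqrt{w}\big)r(t)^{-1-\mu}$, the homogeneous part being absorbed by the $f^2$-weight. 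Integrating, using $r(t)\asymp t$, and invoking $c_0>1/2$ — which is exactly what makes $\int^\infty f(\sigma)^{-2}\diff\sigma$ converge and controls the weighted integrals $f(r(t))^{-2}\int^{r(t)}f(\sigma)^2\sigma^{-1-\mu}\diff\sigma$ — forces $h^*(r(t),\theta(t),\eta(t))\to0$, hence $\rho(t)\to\rho_\infty$.

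Finally, convergence of $(\theta(t),\eta(t))$ and the quantitative estimate. With $h^*\to0$ at the rate just obtained, Hamilton's equations give $|\dot\theta|\lesssim f(r(t))^{-1}\sqrt{h^*(t)}$ and $|\dot\eta|\lesssim r(t)^{-1-\mu}+h^*(t)$, which are integrable on a half-line, so $\theta(t)\to\theta_\infty$ and $\eta(t)\to\eta_\infty$ in $T^*S$. For the last claim, write $\dot r=c^{-1}\sqrt{2E-h^*}$ (taking the positive root, valid once $\rho>0$), so that $|\dot r-\rho_\infty|\lesssim|c^{-1}-1|+h^*(t)\lesssim r(t)^{-1-\mu}+h^*(t)$; integrating over $[T,t]$ gives $r(t)=\rho_\infty t+O(1)$.

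I expect the main obstacle to be the angular-energy step of the second paragraph. The delicate points are: (a) extracting the correct rate of decay of $\partial_r h^*$ from \eqref{eq_classical_mourre} in the general non-model, short-range-perturbed setting, so that the error terms are genuinely integrable along the orbit rather than merely bounded; (b) closing the differential inequality for $w$ quantitatively using $c_0>1/2$, as opposed to the weaker $c_0>0$ that would only yield boundedness of $h^*$; and (c) establishing $\liminf_{t\to\infty}\rho(t)>0$ — needed to reparametrize the orbit by $r$ and to turn $\int f(r(s))^2 r(s)^{-1-\mu}\diff s$-type integrals into convergent ones — before one knows that $\rho(t)$ converges.
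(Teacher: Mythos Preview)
Your approach is essentially the paper's: both track a quantity weighted by $f^2$ and exploit the classical Mourre inequality to derive a differential inequality along the flow. The paper works with $a=f(r)^2(\sqrt{2h_0}-\rho)$, you work with $w=f(r)^2h^*$; on the energy surface these are comparable since $h^*=2E_0-c^{-2}\rho^2\approx 2\rho_\infty(\rho_\infty-\rho)$. Your treatment of $\liminf\rho>0$ via $b=f\rho$ and the comparison $\dot{(b^2)}\gtrsim E\,\dot{(f^2)}$ is a clean variant of the paper's Lemma~\ref{lemm_asymp_radial}.

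The one genuine gap is the error term in your differential inequality for $w$. Specializing \eqref{eq_classical_mourre} at $\rho=0$ gives $\partial_r h^*\leq -2\tfrac{f'}{f}h^*+C\tfrac{f'}{f}r^{-1-\mu}$, with the $f'/f$ factor \emph{explicit} in the remainder. You absorb this into $O(r^{-1-\mu})$ and arrive at $\dot w\leq C\bigl(f^2+\sqrt{w}\bigr)r^{-1-\mu}$; but then $f(R)^{-2}\int^R f(\sigma)^2\sigma^{-1-\mu}\,\diff\sigma$ is only $O(R^{-\mu})$ in the polynomial case $f(r)=r^{c_0}$ with $\mu<2c_0$, hence not integrable when $0<\mu\leq1$, and the conclusion $r(t)=\rho_\infty t+O(1)$ fails. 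Keeping the sharper error $\dot w\lesssim ff'r^{-1-\mu}+\sqrt{w}\,r^{-1-\mu}$ is what rescues this: writing $ff'r^{-1-\mu}=\tfrac12\tfrac{\diff}{\diff r}(f^2r^{-1-\mu})+\tfrac{1+\mu}{2}f^2r^{-2-\mu}$ turns the main piece into a perfect derivative and gains an extra $r^{-1}$ on the rest, yielding $h^*=O(t^{-1-\delta})$ with $\delta<\min\{\mu,2c_0-1\}$. This is exactly what the paper does in Lemma~\ref{lemm_rho1} (see \eqref{eq_rho_3}--\eqref{eq_rho_4}), and it is precisely the point you flag as obstacle~(a); once you retain the $f'/f$ factor, your outline closes.
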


The proof of Theorem \ref{theo_classical_estimate} is separated to several steps. In the following, $(r(t), \theta(t), \rho(t), \eta(t))$ is a classical orbit satisfying the assumption of Theorem \ref{theo_classical_estimate}. 

We record the explicit form of the radial component of the Hamilton equation with respect to the free Hamiltonian \eqref{eq_free_hamiltonian}: 
\begin{equation}
    \label{eq_hamilton_equation_radial}
    \frac{\diff r}{\diff t}=c(r, \theta)^{-2}\rho, \quad \frac{d\rho}{\diff t}=\frac{\partial_r c}{c^3}\rho^2-\frac{1}{2}\frac{\partial}{\partial r}h^*(r, \theta, \eta)
\end{equation}
We also note the energy conservation law $h_0 (x(t), \xi (t))=E_0$. The total energy $E_0$ is positive by the nontrapping condition.

\begin{lemm}
    \label{lemm_asymp_radial}
    The asymptotic radial momentum $\rho_\infty:=\lim_{t\to\infty}\rho(t)$ exists and equals to $\sqrt{2E_0}$. 
\end{lemm}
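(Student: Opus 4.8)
The plan is to extract the asymptotics of $\rho(t)$ from the classical Mourre estimate \eqref{eq_classical_mourre}, exactly as one proves decay of the radial momentum in scattering theory. First I would record that, along the orbit, the function $t\mapsto f(r(t))\rho(t)$ has derivative
\[
    \frac{\diff}{\diff t}\bigl(f(r(t))\rho(t)\bigr)=\{f\rho,h_0\}(r(t),\theta(t),\rho(t),\eta(t))\ge 2f'(r(t))\bigl(h_0-Cr(t)^{-1-\mu}\bigr)=2f'(r(t))\bigl(E_0-Cr(t)^{-1-\mu}\bigr),
\]
using the energy conservation $h_0(x(t),\xi(t))=E_0$. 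Since $r(t)\to\infty$ (nontrapping) and $E_0>0$, for $t$ large enough the right-hand side is bounded below by $f'(r(t))E_0>0$, so $f(r(t))\rho(t)$ is eventually strictly increasing; in particular $\rho(t)$ is eventually positive (here I use $f>0$ and $f'\ge c_0 r^{-1}>0$ from \eqref{eq_ineq_f_logbdd}), hence $\diff r/\diff t=c^{-2}\rho>0$ eventually and $r(t)$ is eventually monotone, which lets me reparametrize integrals in $r$ rather than $t$.

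Next I would control $h^*(r(t),\theta(t),\eta(t))$. From $h_0=E_0$ and $h_0=\tfrac12(c^{-2}\rho^2+h^*)$ we get $h^*(r(t),\theta(t),\eta(t))=2E_0-c(r(t),\theta(t))^{-2}\rho(t)^2\le 2E_0$, so $h^*$ stays bounded along the orbit. I then want to show $h^*(r(t),\theta(t),\eta(t))\to 0$. For this I use \eqref{eq_ineq_model_bdd} together with $f(r)\to\infty$: indeed $f(r)\to\infty$ follows from $f'/f\ge c_0/r$ on $r\ge1$, which integrates to $f(r)\ge f(1)\,r^{c_0}$ with $c_0>1/2>0$. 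The angular part $h^*(1,\theta(t),\eta(t))$ is bounded — because $h^*(r(t),\cdot)$ is bounded and \eqref{eq_ineq_model_bdd} gives $h^*(1,\theta,\eta)\le Cf(r)^2 h^*(r,\theta,\eta)$, which is not immediately a bound... so instead I argue directly: from $h^*(r,\theta,\eta)\le 2E_0$ and the lower bound in \eqref{eq_ineq_model_bdd}, $h^*(1,\theta(t),\eta(t))\le C f(r(t))^2\cdot 2E_0$, which grows; that is the wrong direction. The cleaner route is to bound $\eta(t)$ via the full orbit equations: differentiate the energy splitting and use the Mourre-type lower bound on $\diff(f\rho)/\diff t$ to show $\int^\infty f'(r(t))\bigl(2E_0-c^{-2}\rho^2\bigr)\,\diff t<\infty$, i.e. $\int^\infty f'(r(t))h^*(r(t),\theta(t),\eta(t))\,\diff t<\infty$; combined with monotonicity of $r$ and $f'(r)\,\diff t = (c^2 f'(r)/\rho)\,\diff r$ this forces $h^*\to 0$ along a sequence, and then along the whole orbit by a Gronwall/continuity argument using Assumption \ref{assu_classical} \ref{assu_sub_dtheta_h}. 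Consequently $c(r(t),\theta(t))^{-2}\rho(t)^2=2E_0-h^*\to 2E_0$, and since $c\to1$ by the short-range condition \ref{assu_sub_short_range} and $\rho(t)>0$ eventually, we get $\rho(t)\to\sqrt{2E_0}$, so $\rho_\infty$ exists and equals $\sqrt{2E_0}$.

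The main obstacle I expect is the genuine convergence $h^*(r(t),\theta(t),\eta(t))\to0$ (equivalently, that the radial momentum captures the whole energy in the limit): boundedness of $h^*$ is immediate, but upgrading to a limit requires squeezing an integrability statement out of the Mourre estimate and then propagating it — handling the $\theta$-dependence of $c$ and $h^*$ via the bounds \eqref{eq_ineq_angular_bdd} and \ref{assu_sub_short_range}, and making sure the reparametrization $\diff t\leftrightarrow\diff r$ is valid (which needs $\rho$ bounded away from $0$, itself a consequence of the monotonicity of $f(r)\rho$ once we know its limit is positive — a mild bootstrap). Everything else — monotonicity of $r$, $f(r)\to\infty$, the algebraic manipulations with $h_0=E_0$ — is routine.
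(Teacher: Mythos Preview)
Your first paragraph is correct and matches the paper: the Mourre inequality makes $f(r(t))\rho(t)$ eventually increasing, and the nontrapping condition forces $\rho(T)>0$ for some large $T$, hence $\rho(t)>0$ and $r(t)$ monotone for $t\ge T$.

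The gap is in the second paragraph, in two places. First, the integrability claim does not follow from the Mourre lower bound: \eqref{eq_classical_mourre} gives $\tfrac{\diff}{\diff t}(f\rho)\ge f'(2E_0-Cr^{-1-\mu})$, which involves the \emph{constant} $2E_0$, not $h^*=2E_0-c^{-2}\rho^2$; there is no upper bound here that would control $\int f'h^*\,\diff t$. If instead you use the equivalent form \eqref{eq_classical_mourre_alt} on $\diff\rho/\diff t$ and the boundedness of $\rho$, you get only $\int (f'/f)(h^*-Cr^{-1-\mu})\,\diff t<\infty$, and to separate the error term you must bound $\int (f'/f)\,r^{-1-\mu}\,\diff t$. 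After the change of variables $\diff t=(c^2/\rho)\,\diff r$ this needs a positive lower bound on $\rho$ --- which is precisely what you are trying to prove. Second, your proposed bootstrap does not supply that lower bound: monotonicity of $f\rho$ with positive limit gives only $\rho(t)\ge A/f(r(t))\to 0$. In the asymptotically hyperbolic regime $f(r)\sim e^{br}$ that the paper explicitly covers, the resulting bound $1/\rho\lesssim f(r)$ makes the error integral $\sim\int e^{br}r^{-1-\mu}\,\diff r$ diverge, so the argument collapses.

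The paper sidesteps the whole $h^*\to 0$ detour with one line: since $c^{-2}\rho\le c^{-1}\sqrt{2E_0}$, one has $f'=\dfrac{\diff f/\diff t}{c^{-2}\rho}\ge\dfrac{\diff f/\diff t}{c^{-1}\sqrt{2E_0}}$, so the Mourre inequality becomes
\[
\frac{\diff}{\diff t}(f\rho)\ \ge\ \frac{2E_0-Cr^{-1-\mu}}{c^{-1}\sqrt{2E_0}}\,\frac{\diff f}{\diff t}\ \ge\ (\sqrt{2E_0}-\varepsilon)\,\frac{\diff f}{\diff t}
\]
for $t$ large. Integrating and dividing by $f(r(t))\to\infty$ gives $\liminf\rho(t)\ge\sqrt{2E_0}-\varepsilon$ for every $\varepsilon>0$, hence $\liminf\rho\ge\sqrt{2E_0}$; the matching $\limsup$ is immediate from $\rho\le c\sqrt{2E_0}$ and $c\to 1$. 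No integrability of $h^*$, no Gronwall, no bootstrap.
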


\begin{proof}
    \initstep\step We prove that $\rho(t)>0$ for sufficiently large $t$. The classical Mourre type estimate \eqref{eq_classical_mourre} implies 
    \begin{equation}\label{eq_frho_0}
        \frac{\diff}{\diff t}(f(r(t))\rho(t)) \geq f^\prime (r(t))(2E_0-Cr(t)^{-1-\mu})
    \end{equation}
By $f^\prime>0$, the nontrapping condition $r(t)\to \infty$ and $E_0>0$, the right hand side of \eqref{eq_frho_0} is positive for sufficiently large $t$. Thus $f(r(t))\rho (t) \geq f(r(T))\rho (T)$ for $t\geq T\gg 1$. 

We have to find a large $T$ such that $f(r(T))\rho (T)>0$. Suppose that there exists $T^\prime>0$ such that $\rho (T)\leq 0$ for all $T\geq T^\prime$. Then the Hamilton equation \eqref{eq_hamilton_equation_radial} implies 
\[
    r(T)=r(T^\prime)+\int_{T^\prime}^T c(r(t), \theta (t))^{-2}\rho (t)\, \diff t\leq r(T^\prime)\leq 0
\]
for all $T\geq T^\prime$. This contradicts to the nontrapping condition $r(T)\to \infty$. Thus, for any $T^\prime>0$, there exists $T\geq T^\prime$ such that $\rho(T)>0$. Hence we obtain $\rho (t)\geq f(r(T))\rho (T)/f(r(t))>0$ for all $t\geq T$. 

\step We employ the classical Mourre type estimate \eqref{eq_classical_mourre} again. For $\rho >0$ and $r\gg 1$, we have
\[
    \{f\rho, h_0\}\geq f^\prime (r)(2h_0-Cr^{-1-\mu})\geq \frac{2h_0-Cr^{-1-\mu}}{c(r, \theta)^{-1}\sqrt{2h_0}}\{ f, h_0\}
\]
by $c^{-1}\rho \leq \sqrt{2h_0}$. 
Thus 
\begin{equation}
    \label{eq_dt_rho_below}
        \frac{\diff}{\diff t}(f(r(t))\rho (t))\geq \frac{2E_0-Cr(t)^{-1-\mu}}{c(r(t), \theta(t))^{-1}\sqrt{2E_0}}\frac{\diff}{\diff t}(f(r(t))). 
\end{equation}
Take an arbitrary small $\varepsilon>0$. Since 
\[
    \lim_{t\to\infty} \frac{2E_0-Cr(t)^{-1-\mu}}{c(r(t), \theta(t))^{-1}\sqrt{2E_0}}
    =\sqrt{2E_0}, 
\]
\eqref{eq_dt_rho_below} implies 
\[
    \frac{\diff}{\diff t}(f(r(t))\rho (t))\geq ( \sqrt{2E_0}-\varepsilon)\frac{\diff}{\diff t}(f(r(t)))
\]
for sufficiently large $t$. This differential inequality shows 
\[
    f(r(t))\rho(t)\geq ( \sqrt{2E_0}-\varepsilon)f(r(t))+C
\]
for some constant $C>0$ and sufficiently large $t$. Dividing both sides by $f(r(t))>0$ and taking a infimum limit as $t\to \infty$, we have 
\[
    \liminf_{t\to\infty} \rho (t)\geq \sqrt{2E_0}-\varepsilon
\]
by $f(r)\geq C^{-1}r^{c_0}$ and $r(t)\to \infty$ as $t\to\infty$. Since $\varepsilon>0$ is arbitrary, we can take a limit $\varepsilon\to 0$ and obtain $\liminf_{t\to\infty} \rho (t)\geq 2E_0$. Combining this with $\rho \leq c(r, \theta)\sqrt{2h_0}$ and $\lim_{r\to\infty}c(r, \theta)=1$ (Assumption \ref{assu_classical} \ref{assu_sub_short_range}), we obtain 
\[
    \sqrt{2E_0}\leq \liminf_{t\to\infty} \rho(t)\leq \limsup_{t\to \infty} \rho (t)\leq \limsup_{t\to\infty} c(r(t), \theta (t))\sqrt{2E_0}=\sqrt{2E_0}. \qedhere
\]
\end{proof}

\begin{lemm}\label{lemm_rho1}
    We have an asymptotic behavior 
    \begin{equation}\label{eq_lemm_rho1}
        \rho(t)=\rho_\infty+O(t^{-1-\delta}) \quad (t\to\infty)
    \end{equation}
    for any $0<\delta<\min\{ 2c_0-1, \mu\}$. 
\end{lemm}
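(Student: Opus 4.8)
The plan is to bootstrap from Lemma \ref{lemm_asymp_radial} (which gives $\rho(t)\to\rho_\infty=\sqrt{2E_0}$) by integrating the radial momentum equation \eqref{eq_hamilton_equation_radial} from $t$ to $\infty$ and estimating the integrand. Concretely, from the energy conservation law $h_0(x(t),\xi(t))=E_0$ we have
\[
    \frac{1}{2}c(r,\theta)^{-2}\rho^2 = E_0 - \frac{1}{2}h^*(r,\theta,\eta),
\]
so $\rho_\infty^2-\rho(t)^2 = \rho_\infty^2 - c(r,\theta)^2(2E_0 - h^*(r,\theta,\eta)) = 2E_0(1-c^2) + c^2 h^*$. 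Using the short-range condition \ref{assu_sub_short_range} of Assumption \ref{assu_classical} ($|c-1|=O(r^{-1-\mu})$) and the model bound \eqref{eq_ineq_model_bdd} together with $f(r)\geq C^{-1}r^{c_0}$, both terms on the right are $O(r^{-1-\mu}) + O(f(r)^{-2}) = O(r^{-\min\{1+\mu,\,2c_0\}})$. Since $\rho(t)+\rho_\infty$ is bounded below by a positive constant for large $t$, this gives $\rho_\infty - \rho(t) = O(r(t)^{-\min\{1+\mu,\,2c_0\}})$.

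Next I would convert the $r(t)$-decay into $t$-decay. From the first Hamilton equation $\dot r = c^{-2}\rho$ and the fact that $c^{-2}\rho \to \rho_\infty > 0$, we get $r(t)/t \to \rho_\infty$, hence $r(t)\geq ct$ for large $t$ with some $c>0$. Substituting into the bound above yields $\rho_\infty-\rho(t) = O(t^{-\min\{1+\mu,\,2c_0\}})$. To land on the exponent $1+\delta$ with $\delta<\min\{2c_0-1,\mu\}$, note $\min\{1+\mu,\,2c_0\} = 1 + \min\{\mu,\,2c_0-1\} > 1+\delta$, so the stated bound follows immediately (indeed with room to spare). I should be slightly careful that the sign is controlled: a priori the estimate as written bounds $|\rho_\infty^2-\rho(t)^2|$, so I get $|\rho(t)-\rho_\infty| = O(t^{-1-\delta})$, which is exactly \eqref{eq_lemm_rho1}.

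The main obstacle — really the only place requiring care — is making the passage $r(t)\gtrsim t$ rigorous before one knows the full asymptotics, i.e. avoiding circularity. This is handled cleanly by Lemma \ref{lemm_asymp_radial}: once $\rho(t)\to\rho_\infty>0$ and $c(r(t),\theta(t))\to 1$ are known, $\dot r(t) = c(r(t),\theta(t))^{-2}\rho(t)$ converges to $\rho_\infty>0$, so for any $\varepsilon>0$ we have $\dot r(t)\geq \rho_\infty-\varepsilon$ for $t$ large, giving $r(t)\geq (\rho_\infty-\varepsilon)t + O(1)$ and in particular $r(t)^{-1} = O(t^{-1})$. A secondary minor point is confirming that $h^*(r(t),\theta(t),\eta(t)) = O(f(r)^{-2})$ uniformly along the orbit: this is immediate from \eqref{eq_ineq_model_bdd} once one knows $h^*(1,\theta(t),\eta(t))$ stays bounded, which follows since $h^*(1,\theta,\eta) \leq C f(r)^2 h^*(r,\theta,\eta) \leq 2C f(r)^2 E_0$ is not yet obviously bounded — instead use $h^*(r,\theta,\eta) = 2E_0 - c^{-2}\rho^2 \leq 2E_0$ directly, and then $h^*(r,\theta,\eta)\leq 2E_0$ plugged back gives nothing new, so one simply observes $0\leq h^*(r(t),\theta(t),\eta(t)) = 2E_0 - c(r(t),\theta(t))^{-2}\rho(t)^2$ and bounds this difference by the $c$- and $\rho$-asymptotics already in hand. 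All of these estimates are routine; the structure of the argument is the content.
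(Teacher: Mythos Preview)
Your argument is circular. You correctly derive from energy conservation that
\[
\rho_\infty^2-\rho(t)^2 \;=\; 2E_0\bigl(1-c(r(t),\theta(t))^2\bigr) \;+\; c(r(t),\theta(t))^2\, h^*(r(t),\theta(t),\eta(t)),
\]
and the first term is indeed $O(r(t)^{-1-\mu})$ by the short-range condition. The problem is the second term. You propose to bound $h^*(r(t),\theta(t),\eta(t))$ either via \eqref{eq_ineq_model_bdd}, which requires $h^*(1,\theta(t),\eta(t))$ bounded (unknown at this stage --- the boundedness of $\eta(t)$ is proved only \emph{after} Lemma~\ref{lemm_rho1} in Theorem~\ref{theo_classical_estimate}), or directly via $h^*=2E_0-c^{-2}\rho^2$. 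But the latter is just $\rho_\infty^2-\rho(t)^2+O(r(t)^{-1-\mu})$, so you are bounding $\rho_\infty^2-\rho(t)^2$ by itself plus a good term. Lemma~\ref{lemm_asymp_radial} gives only $\rho(t)\to\rho_\infty$ with no rate, so ``the $\rho$-asymptotics already in hand'' yield merely $h^*=o(1)$, and your inequality collapses to the triviality $\rho_\infty^2-\rho^2=o(1)$.

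This is precisely why the paper's proof introduces the auxiliary quantity $a(r,\rho)=f(r)^2(\sqrt{2h_0}-\rho)$ and uses the classical Mourre estimate \eqref{eq_classical_mourre} (equivalently \eqref{eq_classical_mourre_alt}) to obtain a \emph{differential} inequality for $a$ along the orbit, namely $\{a,h_0\}\le C\{f^2 r^{-1-\mu},h_0\}+Cf(r)^2 r^{-2-\mu}$. Integrating this gives a bound on $f(r(t))^2(\rho_\infty-\rho(t))$ that does not presuppose any decay of $h^*$. Energy conservation alone is an algebraic constraint linking $\rho_\infty-\rho$ and $h^*$; it cannot by itself produce a rate for either, and the Mourre-type dynamical input is essential here.
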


\begin{proof}
    We define $a(r, \rho):=f(r)^2(\sqrt{2h_0}-\rho)$. A direct calculation shows that 
    \begin{equation}\label{eq_rho_1}
        \{ a, h_0 \}
        =2\frac{ c^{-2}f^\prime}{f}\rho a-f^2\{\rho, h_0\}.  
    \end{equation}

    A simple calculation shows that classical Mourre estimate \eqref{eq_classical_mourre} is equivalent to 
\begin{equation}
    \label{eq_classical_mourre_alt}
    \{ \rho, h_0\} \geq \frac{f^\prime (r)}{f(r)}\left(2h_0-c^{-2}\rho^2-Cr^{-1-\mu}\right). 
\end{equation}

By \eqref{eq_classical_mourre_alt}, we have     
    \begin{equation}\label{eq_rho_2}
        \begin{split}
        \{ \rho , h_0\} 
        &\geq \frac{f^\prime}{f}(2h_0-c^{-2}\rho^2-Cr^{-1-\mu}) \\
        &=\frac{c^{-2}f^\prime}{f}(\sqrt{2h_0}+\rho)(\sqrt{2h_0}-\rho)-C\frac{f^\prime r^{-1-\mu}}{f}+\frac{2h_0 f^\prime}{f}(1-c^{-2}) \\
        &\geq \frac{c^{-2}f^\prime}{f^3}(\sqrt{2h_0}+\rho)a-C\frac{f^\prime r^{-1-\mu}}{f}(1+h_0). 
        \end{split}
    \end{equation}
    Here we used the short range condition $|c-1|=O(r^{-1-\mu})$ (Assumption \ref{assu_classical} \ref{assu_sub_short_range}). Combining \eqref{eq_rho_1} and \eqref{eq_rho_2}, we have 
    \begin{equation}\label{eq_rho_3}
        \begin{split}
            \{ a, h_0 \}
            &\leq \frac{c^{-2}f^\prime}{f}\underbrace{(2\rho-\sqrt{2h_0}-\rho)a}_{=-f^{-2}a^2\leq 0}+ Cff^\prime r^{-1-\mu} (1+h_0)\\
            &\leq Cff^\prime r^{-1-\mu} (1+h_0)
            =\frac{C(1+h_0)}{2c^{-2}\rho}\{ f^2 r^{-1-\mu}, h_0\}+(1+\mu)f^2 r^{-2-\mu}.  
        \end{split}
    \end{equation}
    In the following we only consider $(r, \theta, \rho, \eta)$ on the energy surface $\{h_0=E_0\}$ such that $\rho$ is sufficiently close to $\rho_\infty=\sqrt{2E_0}$. Then \eqref{eq_rho_3} becomes 
    \begin{equation}
        \label{eq_rho_4}
        \{ a, h_0 \}\leq C\{ f^2 r^{-1-\mu}, h_0\}+Cf(r)^2 r^{-2-\mu}. 
    \end{equation}
    Fix a large $R>0$. Since $r\mapsto f(r)/r^{c_0}$ is monotonically increasing by \eqref{eq_ineq_f_logbdd} and 
    \[
        \frac{\diff}{\diff r}\left( \frac{f(r)}{r^{c_0}}\right)=\frac{f^\prime - c_0 f r^{-1}}{r^{c_0}}\geq 0, 
    \]
    we have the inequality $f(r)/f(R)\leq (r/R)^{c_0}$ for $R\geq r$. Thus \eqref{eq_rho_4} becomes
    \[
        \{ a, h_0 \}\leq C\{ f^2 r^{-1-\mu}, h_0\}+Cf(R)^2 R^{-c_0}r^{-2-\mu+c_0}. 
    \]

    Now we substitute $(r, \theta, \rho, \eta)=(r(t), \theta (t), \rho (t), \eta(t))$. Then we have 
    \begin{align*}
        \frac{\diff}{\diff t}(a(r(t), \rho(t)))
        &\leq C\frac{\diff}{\diff t}(f(r(t))^2 +r(t)^{-1-\mu})+Cf(R)^2 R^{-c_0} r(t)^{-2-\mu+c_0}
    \end{align*}
    for large $t$ such that $r(t)\leq R$. Note that $t\mapsto r(t)$ is monotonically increasing for large $t$ by $\rho_\infty>0$ and the Hamilton equation \eqref{eq_hamilton_equation_radial}. Integrating both sides in $[T, t]$ and substituting $R=r(t)$, we obtain 
    \[
        a(r(t), \rho (t))\leq Cf(r(t))^2 r(t)^{-1-\mu}+Cf(r(t))^2 r(t)^{-c_0}\int_T^t r(s)^{-2-\mu+c_0}\, \diff s +C.  
    \]
    Recall $a(r, \rho)=f(r)^2(\sqrt{2h_0}-\rho)$. Then we have 
    \begin{equation}
        \label{eq_rho1_duhamel}
        \sqrt{2E_0}-\rho (t)\leq Cr(t)^{-1-\mu}C r(t)^{-c_0}\int_T^t r(s)^{-2-\mu+c_0}\, \diff s +Cf(r(t))^{-2}. 
    \end{equation}
            
    Since $f(r)\geq C^{-1}r^{c_0}$ by \eqref{eq_ineq_f_logbdd} and 
    \begin{align*}
        &r(t)^{-c_0}\int_T^t r(s)^{-2-\mu+c_0}\, \diff s \\
        &=r(t)^{-c_0}\int_T^t \frac{r(s)^{-2-\mu+c_0}\diff r/ds(s)}{c(r(s), \theta (s))^{-2}\rho (s)}\, \diff s
        \leq Cr(t)^{-c_0}\int_T^t r(s)^{-2-\mu+c_0}\frac{\diff r}{ds}(s)\, \diff s \\
        &\leq 
        \begin{cases}
            Cr(t)^{-1-\mu}+Cr(t)^{-c_0} & \text{if } \mu+1\neq c_0, \\
            Cr(t)^{-1-\mu}\log r(t) & \text{if } \mu+1=c_0
        \end{cases}
        \\
        &\leq Cr(t)^{-1-\delta}
    \end{align*}
    for $0<\delta<\min \{ \mu, 2c_0-1\}$, \eqref{eq_rho1_duhamel} becomes 
    \[
        \sqrt{2E_0}-\rho (t)\leq Cr(t)^{-1-\mu}+Cr(t)^{-1-\delta}+Cr(t)^{-2c_0}\leq Cr(t)^{-1-\delta}. 
    \] 
    We can replace $r(t)$ to $t$ since 
    \begin{align*}
        &\lim_{t\to\infty}\frac{r(t)}{t} \\
        &=\lim_{t\to\infty} \frac{t-T}{t}\int_0^1 c(r(T+(t-T)s), \theta (T+(t-T)s))^{-2}\rho (T+(t-T)s)\, \diff s \\
        &=\rho_\infty
    \end{align*}
    by the Hamilton equation \eqref{eq_hamilton_equation_radial} and the Lebesgue dominated convergence theorem. Thus $\sqrt{2E_0}-\rho (t)\leq O(t^{-1-\delta})$. 

    The converse inequality is easily proved by the estimate 
    \[
        \rho(t)\leq c\sqrt{2E_0}\leq (1+Cr(t)^{-1-\mu})\sqrt{2E_0}\leq \sqrt{2E_0}+O(t^{-1-\mu}). \qedhere
    \]
\end{proof}

\begin{proof}[Proof of Theorem \ref{theo_classical_estimate}]
    We already proved the existence of the asymptotically radial momentum $\rho_\infty=\lim_{t\to\infty}\rho(t)$ in Lemma \ref{lemm_asymp_radial}. Noting the integrability of $t^{-1-\delta}$ in $[1, \infty)$ and integrating both sides of \eqref{eq_lemm_rho1}, we obtain
    \[
        \rho_\infty t-C \leq r(t)\leq \rho_\infty+C. 
    \]
    
    In the following we prove the existence of asymptotic angle and angular momentum $(\theta_\infty, \eta_\infty):=\lim_{t\to\infty}(\theta (t), \eta(t))\in T^*S$. Let $d_S$ be the distance function on $S$ associated with the Riemannian metric $h(1, \theta, \diff \theta)$. For $t\geq s$, we have 
        \begin{equation}\label{eq_theta_cauchy} d_S(\theta(t), \theta(s))\leq \int_s^t \left|\frac{\diff \theta}{\diff t}(\sigma)\right|_{h(1)}\, \diff \sigma\leq \int_s^t f(r(\sigma))^{-1}\left|\frac{\diff \theta}{\diff t}(\sigma)\right|_{h(r(\sigma))}\, \diff \sigma. 
        \end{equation}
        The angle component of the Hamilton equation is 
        \begin{equation}\label{eq_hamilton_equation_angle}
            \frac{\diff \theta_j}{\diff t}(t)=\sum_{k=1}^{n-1} h^{jk}(r(t), \theta (t))\eta_k (t), 
        \end{equation}
        which implies 
        \[
            \left|\frac{\diff \theta}{\diff t}(t)\right|_{h(r)}=|\eta|_{h^*(r)}. 
        \]
        By this relation and the energy conservation law, we have 
            \[
                \left|\frac{\diff \theta}{\diff t}(\sigma)\right|_{h(r(\sigma))}=|\eta(\sigma)|_{h^*(r(\sigma))}=\sqrt{2E_0-c^{-2}\rho^2} 
            \]
        by \eqref{eq_ineq_model_bdd}. We apply Lemma \ref{lemm_rho1} and obtain 
            \[\sqrt{2E_0-c^{-2}\rho^2}\leq \sqrt{Ct^{-1-\delta}}=Ct^{-(1+\delta)/2}. 
            \]
            Combining this with $f(r)\geq C^{-1}r^{c_0}$, which follows from \eqref{eq_ineq_f_logbdd}, \eqref{eq_theta_cauchy} becomes 
            \[
                d_S(\theta(t), \theta(s))\leq C\int_s^t r(\sigma)^{-c_0}\sigma^{-(1+\delta)/2}\, \diff \sigma \leq C\int_s^t \sigma^{-c_0-(1+\delta)/2}\, \diff \sigma. 
            \]
            Since $c_0+(1+\delta)/2>1$ by $c_0>1/2$, the integrand $\sigma^{-c_0^(1+\delta)/2}$ is integrable in $[1, \infty)$. Thus 
            \[
                d_S(\theta(t), \theta(s))\to 0 \quad (t\geq s \to \infty). 
            \]
            Hence the limit 
            \[
                \theta_\infty:=\lim_{t\to\infty}\theta (t)
            \]
            exists by the completeness of compact Riemannian manifolds.

            Finally we consider the $\eta$ component. Take local coordinates near $\theta_\infty$. We take sufficiently large $T>0$ such that $\theta(t)$ is in the coordinate neighborhood for all $t\geq T$. In the associated canonical coordinates, we have 
            \begin{align*}\left|\frac{\diff \eta_i}{\diff t}\right|
                &\leq Cr^{-1-\mu}+Ch^*(r, \theta, \eta)= Cr^{-1-\mu}+C(2E_0-c^{-2}\rho^2) \\
                &\leq Ct^{-1-\mu}+C(2E_0+Ct^{-1-\mu}-(\rho_\infty^2-Ct^{-1-\delta}))=Ct^{-1-\delta}. 
            \end{align*}
            Thus
            \[ |\eta_i(t)-\eta_i(s)|\leq C\int_s^t \sigma^{-1-\delta}\, \diff \sigma=C(s^{-\delta}-t^{-\delta})\to 0 \quad (t\geq s \to \infty)\]
            for $t\geq s\geq T$. Hence the limit $\eta_{\infty, i}:=\lim_{t\to\infty}\eta_i(t)$ exists. We pull $(\eta_{\infty, 1}, \ldots, \eta_{\infty, n-1})$ back by the canonical coordinates and obtain the asymptotic angular momentum $\eta_\infty\in T^*_{\theta_\infty}S$. 
\end{proof}

\section{Pseudodifferential operators on manifolds}\label{sect_psido_manifolds}

\subsection{Symbol classes}\label{subs_cylindrical_class}

We first introduce a suitable symbol class for analyzing the symbols defined as \eqref{eq_defi_op_at}. In order to deduce global properties of pseudodifferential operators ($L^2$ boundedness for example), we need to control behavior of symbols near infinity ($r\to \infty$). 

\begin{defi}
    Let $m\in \mathbb{R}$. A function $a\in C^\infty (T^*M)$ belongs to $S_\mathrm{cyl}^m(T^*M)$ if it satisfies the following conditions. 
    \begin{itemize}
        \item For every local coordinate $\varphi: U\to V$, the push forward $\tilde\varphi_*a$ by the induced canonical coordinate $\tilde\varphi: T^*U\to T^*V$ belongs to $S^m_\mathrm{loc}(T^*V)$. Here $S^m_\mathrm{loc}(T^*V)\subset C^\infty (V\times \mathbb{R}^n)$ stands for the set of all functions which satisfy 
        \[
            \jbracket{\xi}^{-m+|\beta|}\partial_x^\alpha \partial_\xi^\beta a\in L^\infty (K\times \mathbb{R}^n)
        \]
        for all compact subsets $K\subset V$ and all multiindices $\alpha, \beta\in \mathbb{Z}_{\geq 0}^n$. 
        \item For any polar coordinate $\varphi: U\to V$ in the end $E$, the push forward $\tilde\varphi_*a(r, \theta, \rho, \eta)$ by the induced canonical coordinate $\tilde\varphi: T^*U\to T^*V$ satisfies 
        \[
            \jbracket{\xi}^{-m+|\beta|}\partial_{r, \theta}^\alpha \partial_\xi^\beta (\tilde \varphi_* a) \in L^\infty ([\delta, \infty)\times K^\prime\times \mathbb{R}^n)
        \]
        for all $\delta>0$, compact subsets $K^\prime\subset \mathbb{R}^{n-1}$ with $\mathbb{R}_+\times K^\prime \subset V$, and all multiindices $\alpha, \beta\in \mathbb{Z}_{\geq 0}^n$. Here $\xi=(\rho, \eta)$. 
    \end{itemize}
\end{defi}

\begin{rema*}
    We denote the subscript ``$\mathrm{cyl}$'' in $S^m_\mathrm{cyl}(T^*M)$ since the symbol in $S^m_\mathrm{cyl}(T^*M)$ can be regarded as a natural symbol class on $E \simeq \mathbb{R}_+\times S$ with a cylindrical metric $\diff r^2+\diff \theta^2$. 
\end{rema*}

It is useful to introduce a terminology for describing supports of symbols up to $O(\hbar^\infty)$. We define it following \cite{Ito06}. 

\begin{defi}\label{defi_support_modulo}
    An $\hbar$-dependent symbol $a(\hbar; x, \xi)\in S^m_\mathrm{cyl}(T^*M)$ satisfies $\rmop{supp}a\subset K$ modulo $O(\hbar^\infty)$ if there exists a possibly $\hbar$-dependent symbol $\tilde a(\hbar; x, \xi)\in S^m_\mathrm{cyl}(T^*M)$ such that $\rmop{supp} \tilde a\subset K$ and $a-\tilde a=O_{S^0_\mathrm{cyl}}(\hbar^\infty)$. 
\end{defi}

We explain a motivation to consider the support modulo $O(\hbar^\infty)$ and also recall facts on a symbol calculus on Euclidean spaces. Let $S^m(T^*\mathbb{R}^n)$ be the Kohn-Nirenberg symbol class 
\[
    S^m(T^*\mathbb{R}^n):=\{ a\in C^\infty (T^*\mathbb{R}^n) \mid |\partial_x^\alpha \partial_\xi^\beta a(x, \xi)|\leq C_{\alpha\beta}\jbracket{\xi}^{m-|\beta|}\}. 
\]
Similarly to Definition \ref{defi_support_modulo}, we call $\rmop{supp}a\subset K$ modulo $O(\hbar^\infty)$ for $a(\hbar; x, \xi)\in S^m(T^*\mathbb{R}^n)$ if there exists a symbol $\tilde a(\hbar; x, \xi)$ such that $\rmop{supp}\tilde a\subset K$ and $a-\tilde a=O_{S^0(T^*\mathbb{R}^n)}(\hbar^\infty)$. 
What to recall are the composition of symbols and the changing variables. 

\fstep{Composition of symbols}For $a\in S^{m_1}(T^*\mathbb{R}^n)$ and $b\in S^{m_2}(T^*\mathbb{R}^n)$, we can calculate a symbol $(a\# b)(\hbar; x, \xi)\in S^{m_1+m_2}(T^*\mathbb{R}^n)$ such that
\[
    a^\mathrm{w}(x, \hbar D)b^\mathrm{w}(x, \hbar D)=(a\# b)^\mathrm{w}(x, \hbar D)
\] 
and 
\begin{align*}
    &(a\# b)(\hbar; x, \xi) \\
    &=\sum_{k=0}^N \frac{\hbar^k}{k!}\left(\frac{D_x\cdot D_{\xi^\prime}-D_{x^\prime} \cdot D_\xi}{2i}\right)^k (a(x, \xi)b(x^\prime, \xi^\prime))\biggr|_{\substack{x^\prime=x \\ \xi^\prime=\xi}} \\
    &\quad +O_{S^{m_1+m_2-N-1}(T^*\mathbb{R}^n)}(\hbar^{N+1})
\end{align*}
for any integer $N\geq 0$ (see Theorem 9.5 in \cite{Zworski12}). Each term 
\[
    \left(\frac{D_x\cdot D_{\xi^\prime}-D_{x^\prime} \cdot D_\xi}{2i}\right)^k (a(x, \xi)b(x^\prime, \xi^\prime))\biggr|_{\substack{x^\prime=x \\ \xi^\prime=\xi}}
\]
is supported in $\rmop{supp}(ab)$. Thus if we define a symbol $c\in S^{m_1+m_2}(T^*\mathbb{R}^n)$ as an asymptotic sum
\[
    c(\hbar; x, \xi)\sim \sum_{k=0}^\infty \frac{\hbar^k}{k!}\left(\frac{D_x\cdot D_{\xi^\prime}-D_{x^\prime} \cdot D_\xi}{2i}\right)^k (a(x, \xi)b(x^\prime, \xi^\prime))\biggr|_{\substack{x^\prime=x \\ \xi^\prime=\xi}}
\]
by the Borel theorem, then $\rmop{supp}c \subset \rmop{supp}(ab)$ and $a\# b-c=O_{S^0(T^*\mathbb{R}^n)}(\hbar^\infty)$. Hence we have $\rmop{supp}(a\# b)\subset \rmop{supp} (ab)$ modulo $O(\hbar^\infty)$. 

\fstep{Changing variables}For a suitable diffeomorphism $\gamma: \mathbb{R}^n\to \mathbb{R}^n$ and a symbol $a\in S^m(T^*\mathbb{R}^n)$, we have 
\[
    \gamma_* a^\mathrm{w}(x, \hbar D)\gamma^*=\tilde a^\mathrm{w}(x, \hbar D)
\]
for some symbol $\tilde a(\hbar; x, \xi)\in S^m(T^*\mathbb{R}^n)$ which has an asymptotic expansion 
\[
    \tilde a(\hbar; x, \xi)\sim \sum_{j=0}^\infty \hbar^j \tilde a_j(x, \xi), \quad \tilde a_j\in S^{m-j}(T^*\mathbb{R}^n)
\]
and 
\[
    \tilde a_0(x, \xi)=a(\gamma (x), \diff \gamma (x)^{-1*}\xi)=:\tilde\gamma^*a(x, \xi)
\]
and $\rmop{supp}a_j \subset \rmop{supp}a_0$ for all $j\geq 0$. This implies $\rmop{supp}\tilde a\subset \rmop{supp}\tilde\gamma^*a$ modulo $O(\hbar^\infty)$. Here $\diff \gamma (x)^{-1*}\xi=\eta$ if and only if 
\[
    \xi_j=\sum_{k=1}^n \frac{\partial\gamma_k}{\partial x_j}(x)\eta_k. 
\]
Furthermore, since we consider pseudodifferential operators acting on half-densities (we will explain them in Section \ref{subs_psido_half_densities}), we have $\tilde a_1=0$. Thus 
\[
    \tilde a=\tilde\gamma^*a+O_{S^{m-2}(T^*\mathbb{R}^n)}(\hbar^2). 
\]
For more details, see Theorem 9.9 and Theorem 9.10 in \cite{Zworski12} or Proposition E.10 in \cite{Dyatlov-Zworski19}. 

\subsection{Pseudodifferential operators acting on half-densities}\label{subs_psido_half_densities}
Before definition of pseudodifferential operators, we recall basic facts on half-densities on manifolds. For a manifold $M$, a line bundle $\pi: \Omega^{1/2}(M)\to M$ is defined as follows and call sections of the line bundle $\Omega^{1/2}(M)\to M$ half-densities of $M$:  

\fstep{Fiber}A fiber $\pi^{-1}(x)$ is a complex vector space spanned by functions of the form 
\[
    |\omega|^{1/2}: (v_1, \ldots, v_n)\in (T^*M)^n\longmapsto |\omega (v_1, \ldots, v_n)|^{1/2}\in \mathbb{R}, \quad \omega\in \Lambda^n T^*_xM. 
\]

\fstep{Local trivialization}Each local coordinates $\varphi=(x_1, \ldots, x_n): U\to V$ on $M$ induces a local line bundle isomorphism 
\[
    (x, v|\diff x_1 \wedge \cdots \wedge \diff x_n|^{1/2})\in \pi^{-1}(U)\overset{\simeq}{\longmapsto} (x, v)\in U\times \mathbb{C}.  
\]

\fstep{}We denote the space of all smooth compactly supported half-densities by $C_c^\infty (M; \Omega^{1/2})$. 

We employ two manipulations for half-densities. 

\fstep{Inner product}Similarly to the definition of integration of differential forms, we define 
\begin{equation}\label{eq_inner_half_densities}
    \jbracket{u, v}:=\sum_{\iota\in I}\int_{\mathbb{R}^n} ((\kappa_\iota \tilde u \overline{\tilde v})\circ \varphi_\iota^{-1})(x)\, \diff x. 
\end{equation}
Here 
\begin{itemize}
    \item $\{\varphi_\iota: U_\iota\to V_\iota\}_{\iota\in I}$ is a locally finite atlas; 
    \item $\{\kappa_\iota\in C^\infty(M)\}_{\iota\in I}$ is a partition of unity subordinate to $\{ U_\iota \}_{\iota\in I}$; 
    \item $u=\tilde u |\diff x_1\wedge \cdots \wedge \diff x_n|^{1/2}$ and $v=\tilde v |\diff x_1\wedge \cdots \wedge \diff x_n|^{1/2}$ in $U_\iota$ are compactly supported half-densities. 
\end{itemize}
\eqref{eq_inner_half_densities} is independent of the choice of an atlas and a partition of unity. 
The inner product \eqref{eq_inner_half_densities} induces an $L^2$-norm $\|u\|_{L^2}:=\jbracket{u, u}^{1/2}$. We define $L^2(M; \Omega^{1/2})$ as the completion of $C_c^\infty (M; \Omega^{1/2})$ with respect to the norm $\|\cdot \|_{L^2}$. 

\fstep{Pull back}For a smooth map $f: M\to M$, we define a pull back $f^*u$ for $u\in C_c^\infty (M; \Omega^{1/2})$ as 
\[
    f^*u (x)(v_1, \ldots, v_n):=u(df(x)(v_1), \ldots, df (x)(v_n))
\]
for $x\in M$ and $v_1, \ldots , v_n \in T_xM$. If $f(x_1, \ldots, x_n)=(y_1, \ldots, y_n)$ locally, then 
\[
    f^* (\tilde u |\diff y|^{1/2})=(\tilde u\circ f)(x)\left|\det \frac{\partial f}{\partial x}(x)\right|^{1/2}|\diff x|^{1/2}. 
\]

All pull back manipulations by diffeomorphisms are unitary operators with respect to the inner product \eqref{eq_inner_half_densities}. 

If $f: M\to M$ be a diffeomorphism, then we define a push forward of half-densities as $f_*:=(f^{-1})^*$. 

\subsection{Properties of pseudodifferential operators}

For the definition of a quantization procedure, we take a finite atlas $\{ \varphi_\iota: U_\iota \to V_\iota \}$ on $M$ as below. 
\begin{enumerate}
    \item We cover the compact subset $r^{-1}(0)$ by finite atlas $\{\varphi_\iota: U_\iota \to V_\iota \}_{\iota\in I_K}$. Here $U_\iota\subset M$, $V_\iota\subset \mathbb{R}^n$ and $\# I_K<\infty$. 
    \item We cover the compact manifold $S$ by finite atlas $\{\varphi^\prime_\iota: U^\prime_\iota \to V^\prime_\iota \}_{\iota\in I_\infty}$. Here $U^\prime_\iota\subset S$, $V^\prime_\iota\subset \mathbb{R}^{n-1}$ and $\# I_\infty<\infty$. We set $U_\iota:=\mathbb{R}_+\times U^\prime_\iota$, $V_\iota:=\mathbb{R}_+\times V^\prime_\iota$ and $\varphi_\iota:=\rmop{id}\times \varphi^\prime_\iota: U_\iota \to V_\iota$. 
    \item We define $I:=I_K \cup I_\infty$ (assuming that $I_K \cap I_\infty =\varnothing$). 
\end{enumerate}

Furthermore, we take a partition of unity $\{\kappa_\iota\in C^\infty (M)\}_{\iota\in I}$ subordinate to $\{\varphi_\iota \}_{\iota\in I}$ such that the following statements hold. 
\begin{itemize}
    \item For $\iota\in I_K$, $\kappa_\iota\in C_c^\infty (U_\iota)$. 
    \item For $\iota\in I_\infty$, $\kappa_\iota$ is a cylindrical function (see Definition \ref{defi_conical}) with $\rmop{supp}\kappa_\iota \subset U_\iota$. 
    \item $\sum_{\iota\in I} \kappa_\iota=1$. 
\end{itemize}

In the following we fix the atlas $\{\varphi_\iota\}_{\iota\in I}$ and the partition of unity $\{\kappa_\iota\}_{\iota\in I}$. 

\begin{defi}[(Non-canonical) quantization]\label{defi_quantization}
    We fix cylindrical functions $\chi_\iota\in C^\infty(M)$ with $\rmop{supp} \chi_\iota \subset U_\iota$ and $\rmop{supp} \chi_\iota =1$ near $\rmop{supp}\kappa_\iota$. For a symbol $a\in S^m_\mathrm{cyl}(T^*M)$ and a function $u\in C_c^\infty (M; \Omega^{1/2})$, we define a quantization as 
    \begin{equation}\label{eq_psido_cyl} \Oph (a)u:=\sum_{\iota\in I}\chi_\iota \varphi_\iota^*(\tilde\varphi_{\iota*}(\kappa_\iota a))^\mathrm{w}(x, \hbar D)\varphi_{\iota*}(\chi_\iota u).  
    \end{equation}

    Here pseudodifferential operators acting on half-densities on Euclidean spaces are defined as 
    \[
        a^\mathrm{w}(x, \hbar D)(\tilde u |\diff x|^{1/2}):=\frac{1}{(2\pi\hbar)^n}\int_{\mathbb{R}^{2n}} a\left(\frac{x+y}{2}, \xi\right) e^{i\xi \cdot (x-y)/\hbar}\tilde u(y)\, \diff y\diff \xi |\diff x|^{1/2}. 
    \]
\end{defi}

Examples which we keep in mind are the quantization of polynomials in momentum variables. In polar coordinates, the quantization of polynomials in $S^m_\mathrm{cyl}(T^*M)$ is a sum of $a_\beta (r, \theta)D_r^{\beta_0}D_\theta^{\beta^\prime}$ with bounded $a_\alpha(r, \theta)$ in the sense that 
\[
    |\partial_r^{\alpha_0}\partial_\theta^{\alpha^\prime}a_\beta (r, \theta)|\leq C_{\alpha\beta}. 
\]

\begin{rema*}
    If $(M, g)$ is a Riemannian manifold and $\varphi: U\to V$ are local coordinates, then $\varphi^* a^\mathrm{w}(x, \hbar D)\varphi_*$ is 
    \begin{align*}
        &\varphi^*a^\mathrm{w}(x, \hbar D)\varphi_*(\tilde u |\vol_g|^{1/2}) \\
        &:=\frac{g(x)^{-1/4}}{(2\pi\hbar)^n}\int_{\mathbb{R}^{2n}} a\left(\frac{x+y}{2}, \xi\right) e^{i\xi \cdot (x-y)/\hbar}\tilde u(y)g(y)^{1/4}\, \diff y\diff \xi |\vol_g|^{1/2}. 
    \end{align*}
    Here $g(x)$ is defined by the relation $\vol_g(x)=g(x)^{1/2}\diff x$. The difference between pseudodifferential operators acting on half-densities and those acting on functions is the existence of the factor $g(x)^{-1/4}g(y)^{1/4}$. 
\end{rema*}

We employ composition and commutator of pseudodifferential operators, boundedness on the $L^2$ space and the sharp G\aa rding inequality in proof of the main theorem.

\begin{theo}\label{theo_psido_composition}
    Let $m_1, m_2\in \mathbb{R}$. For $a\in S^{m_1}_\mathrm{cyl}(T^*M)$, $b\in S^{m_2}_\mathrm{cyl}(T^*M)$, the following statements hold. 
    \begin{enumerate}
        \item The composition $\Oph (a)\Oph (b)$ is represented by some symbol $c(\hbar; x, \xi)\in S^{m_1+m_2-2}_\mathrm{cyl}(T^*M)$ as 
        \begin{equation}\label{eq_composition}
            \Oph (a) \Oph (b)=\Oph \left(ab+\frac{i\hbar}{2}\{a, b\}+\hbar^2 c\right)+O_{L^2\to L^2}(\hbar^\infty). 
        \end{equation}
        The symbol $c(\hbar; x, \xi)$ satisfies $\rmop{supp}c(\hbar; x, \xi)\subset \rmop{supp}(ab)$ mod $O(\hbar^\infty)$. 
        \item In particular, the commutator $[\Oph (a), \Oph (b)]$ is represented by some $c\in S^{m_1+m_2-2}_\mathrm{cyl}(T^*M)$ as 
        \begin{equation}\label{eq_commutator}
            [\Oph (a), \Oph (b)]=\Oph (i\hbar\{a, b\}+\hbar^2 c)+O_{L^2\to L^2}(\hbar^\infty). 
        \end{equation}
        The symbol $c=c (\hbar; x, \xi)$ satisfies $\rmop{supp}c(\hbar; x, \xi)\subset \rmop{supp}(ab)$ mod $O(\hbar^\infty)$. 
    \end{enumerate}
\end{theo}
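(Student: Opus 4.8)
The plan is to reduce the composition on the manifold to the known composition formula on Euclidean spaces (recalled in Section \ref{subs_cylindrical_class}) by unfolding the definition \eqref{eq_psido_cyl} of $\Oph$. Writing $a_\iota := \tilde\varphi_{\iota*}(\kappa_\iota a)$ and $b_\iota := \tilde\varphi_{\iota*}(\kappa_\iota b)$, the product $\Oph(a)\Oph(b)$ becomes a double sum over $\iota, \iota' \in I$ of terms of the form $\chi_\iota \varphi_\iota^* a_\iota^\mathrm{w}(x,\hbar D)\varphi_{\iota*}(\chi_\iota \chi_{\iota'}\,\varphi_{\iota'}^* b_{\iota'}^\mathrm{w}(x,\hbar D)\varphi_{\iota'*}(\chi_{\iota'}u))$. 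The terms with $U_\iota \cap U_{\iota'} = \varnothing$ contribute $O_{L^2\to L^2}(\hbar^\infty)$: on such terms the two cutoffs have disjoint supports, so the composed operator has a kernel that is $O(\hbar^\infty)$ in every seminorm by non-stationary phase (pseudolocality); this uses that the symbols lie in $S^m_\mathrm{cyl}$, which controls behavior uniformly as $r\to\infty$, so the estimate is genuinely global and not just local. For the remaining terms, $U_\iota$ and $U_{\iota'}$ overlap, and on the overlap I transport everything into a single coordinate chart: using the change-of-variables formula for Weyl quantization on half-densities (recalled at the end of Section \ref{subs_cylindrical_class}, including the crucial fact $\tilde a_1 = 0$ for half-densities), the conjugated operator $\varphi_{\iota'*}\varphi_\iota^* b_{\iota'}^\mathrm{w}\varphi_{\iota'*}\varphi_\iota^*$ (wherever this composition of transition maps is defined) is again a Weyl quantization with principal symbol $\tilde\gamma^* b_{\iota'}$ and error $O_{S^{m_2-2}}(\hbar^2)$ supported in the pullback of $\operatorname{supp}b_{\iota'}$ modulo $O(\hbar^\infty)$.

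Once both factors of each surviving term live in the same chart, I apply the Euclidean composition formula: $a^\mathrm{w}(x,\hbar D)\,\beta^\mathrm{w}(x,\hbar D) = (a\#\beta)^\mathrm{w}(x,\hbar D)$ with
\[
a\#\beta = a\beta + \frac{\hbar}{2i}\{a,\beta\} + \hbar^2(\text{remainder}),
\]
the remainder lying in $S^{m_1+m_2-2}$ and supported in $\operatorname{supp}(a\beta)$ modulo $O(\hbar^\infty)$ by the discussion in Section \ref{subs_cylindrical_class}. Then I transport back to $M$, collecting: the leading term reassembles (using $\sum_\iota \kappa_\iota = 1$ and that $\chi_\iota = 1$ near $\operatorname{supp}\kappa_\iota$) into $ab$; the $O(\hbar)$ term reassembles into $\frac{i\hbar}{2}\{a,b\}$, where the Poisson bracket is coordinate-invariant so the patching is consistent; and the $O(\hbar^2)$ remainders, together with the $\hbar^2$ errors from the change of variables, define a symbol $c \in S^{m_1+m_2-2}_\mathrm{cyl}(T^*M)$ with $\operatorname{supp}c \subset \operatorname{supp}(ab)$ modulo $O(\hbar^\infty)$, the latter because every error term that arose was supported in (the pullback/pushforward of) $\operatorname{supp}(a_\iota b_{\iota'}) \subset \operatorname{supp}(\kappa_\iota\kappa_{\iota'}ab)$. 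The commutator statement (ii) follows by subtracting the formula for $\Oph(b)\Oph(a)$ from that for $\Oph(a)\Oph(b)$, noting $ab - ba = 0$ and $\{a,b\} - \{b,a\} = 2\{a,b\}$, so the $\hbar^0$ term cancels and the $O(\hbar)$ term is $i\hbar\{a,b\}$.

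The main obstacle is bookkeeping the supports and the uniform (in $r$) seminorm estimates through the coordinate changes. Two points need care: first, the transition maps between polar charts $\varphi_\iota$, $\varphi_{\iota'}$ with $\iota, \iota' \in I_\infty$ are of the form $\operatorname{id}\times(\text{transition on }S)$, so they are bona fide "suitable diffeomorphisms" to which the Euclidean change-of-variables result applies uniformly as $r\to\infty$ — this is where the definition of $S^m_\mathrm{cyl}$ (with its separate clause for polar coordinates) is exactly tailored; one must check the cross terms with $\iota\in I_K$, $\iota'\in I_\infty$ are handled by compactness on the $I_K$ side. Second, one must verify that the errors absorbed into $c$ stay in the cylindrical class and do not merely lie in a local class; this again is immediate from the structure of the transition maps and the fact that the Euclidean composition/change-of-variables expansions preserve the relevant uniform symbol estimates. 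I expect no genuine difficulty beyond this organization, since each analytic ingredient is quoted from Section \ref{subs_cylindrical_class} or standard references (\cite{Zworski12}, \cite{Dyatlov-Zworski19}).
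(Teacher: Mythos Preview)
Your approach is essentially the same as the paper's: both unfold the definition of $\Oph$ into a double sum over chart indices, discard terms with $U_\iota\cap U_{\iota'}=\varnothing$, transport the surviving terms into a common chart via the half-density change-of-variables formula, apply the Euclidean Moyal expansion, and reassemble using $\sum_\iota\kappa_\iota=1$. Two remarks on the execution. First, the terms with $U_\iota\cap U_{\iota'}=\varnothing$ are in fact identically zero (since $\operatorname{supp}\chi_\iota\subset U_\iota$ forces $\chi_\iota\chi_{\iota'}=0$), so no pseudolocality argument is needed there. Second, and more substantively, your phrase ``transport back to $M$, collecting'' hides the one nontrivial step the paper isolates as a separate lemma (Lemma~\ref{lemm_psido_local_global}): after the chartwise composition you are left with a sum $\sum_\iota \Ophloc(c_\iota)$ of \emph{locally} quantized symbols, and one must show this equals $\Oph(c)+O_{L^2\to L^2}(\hbar^\infty)$ for some \emph{global} $c\in S^{m_1+m_2-2}_{\mathrm{cyl}}(T^*M)$, where $\Oph$ is the fixed non-canonical quantization tied to the chosen $\{\kappa_\iota,\chi_\iota\}$. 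The paper proves this by an iterative Borel-type construction; without it the ``reassembly'' is not automatic, because the partition of unity appears asymmetrically in the output of the composition and in the definition of $\Oph$. The paper also organizes the cancellation of the cross terms $\{\kappa_\iota,a\}$, $\{a,\kappa_{\iota'}\}$, $\{\kappa_\iota,ab\}$ by splitting $A_\iota=A_\iota'-\tfrac{i\hbar}{2}A_\iota''$ (moving $\kappa_\iota$ outside as a multiplier), which makes the two sums over $\iota$ and $\iota'$ separately tractable; your sketch is correct that these terms vanish upon summation, but the paper's splitting is what makes this transparent.
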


\begin{rema*}
    One can prove that the $O_{L^2\to L^2}(\hbar^\infty)$ term in \eqref{eq_composition} and \eqref{eq_commutator} has smooth integral kernels. However, we do not use it in this paper. 
\end{rema*}

\begin{theo}
    \label{theo_L2_bdd_cyl}
    For any symbol $a\in S^0_\mathrm{cyl}(T^*M)$, the operator $\Oph (a)$ is bounded on $L^2(M; \Omega)$. 

    Furthermore, if the symbol $a$ also depends on some parameter $\tau\in \Omega$ and are uniformly bounded in $S^0_\mathrm{cyl}(T^*M)$, then the operator norm $\|\Oph (a)\|_{L^2\to L^2}$ is uniformly bounded with respect to $\tau\in\Omega$. 
\end{theo}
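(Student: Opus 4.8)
The plan is to reduce the statement, via the finite atlas built into the definition \eqref{eq_psido_cyl}, to a single uniform estimate on Euclidean space and then invoke the semiclassical Calder\'on--Vaillancourt theorem. First, since $I$ is finite, it suffices to bound each summand
\[
    T_{\iota,\hbar}:=\chi_\iota\,\varphi_\iota^*\,b_\iota^\mathrm{w}(x,\hbar D)\,\varphi_{\iota*}\,\chi_\iota, \qquad b_\iota:=\tilde\varphi_{\iota*}(\kappa_\iota a),
\]
on $L^2(M;\Omega^{1/2})$ uniformly in $\hbar\in(0,1]$. The maps $\varphi_{\iota*}$ and $\varphi_\iota^*$ act between $L^2$ spaces of half-densities as isometries (as recalled in Section \ref{subs_psido_half_densities}, pull-backs by diffeomorphisms are unitary), and multiplication by the cylindrical cut-off $\chi_\iota$ is bounded on $L^2$ with norm $\|\chi_\iota\|_{L^\infty}<\infty$, a cylindrical function being bounded. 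Hence
\[
    \|T_{\iota,\hbar}\|_{L^2\to L^2}\le \|\chi_\iota\|_{L^\infty}^2\,\bigl\|b_\iota^\mathrm{w}(x,\hbar D)\bigr\|_{L^2(\mathbb{R}^n)\to L^2(\mathbb{R}^n)},
\]
and the whole problem is reduced to proving that $\|b_\iota^\mathrm{w}(x,\hbar D)\|_{L^2(\mathbb{R}^n)\to L^2(\mathbb{R}^n)}\le C$ with $C$ independent of $\hbar$.

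Next I would check that each $b_\iota$, extended by $0$ outside $V_\iota$, belongs to the Calder\'on--Vaillancourt class $S^0_{0,0}(\mathbb{R}^{2n})$, i.e.\ $|\partial_x^\alpha\partial_\xi^\beta b_\iota|\le C_{\alpha\beta}$ for all multiindices $\alpha,\beta$, with the $C_{\alpha\beta}$ controlled by finitely many seminorms of $a$ in $S^0_\mathrm{cyl}(T^*M)$. For $\iota\in I_K$ this is immediate: $\kappa_\iota\in C_c^\infty(U_\iota)$, so $b_\iota$ is compactly supported in $x$, and the $S^0_\mathrm{loc}$ condition gives $|\partial_x^\alpha\partial_\xi^\beta b_\iota|\le C_{\alpha\beta}\jbracket{\xi}^{-|\beta|}\le C_{\alpha\beta}$ on that compact $x$-set. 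For $\iota\in I_\infty$ one first observes that $\rmop{supp}\kappa_\iota$, being a closed subset of $M$ contained in the open end $E$, is bounded away from the compact core; thus $\rmop{supp}(\kappa_\iota a)\subset[\delta_0,\infty)\times K'$ for some $\delta_0>0$ and some compact $K'\Subset U'_\iota$. On such a region the defining estimates of $S^0_\mathrm{cyl}(T^*M)$ in polar coordinates, combined with the Leibniz rule applied to the factor $\kappa_\iota$ (which is cylindrical, hence bounded together with all its derivatives), give $|\partial_{r,\theta}^\alpha\partial_{\rho,\eta}^\beta b_\iota|\le C_{\alpha\beta}\jbracket{(\rho,\eta)}^{-|\beta|}\le C_{\alpha\beta}$; since the support is a positive distance from $\{r=0\}$ and compact in $\theta$, the zero extension is smooth and lies in $S^0_{0,0}(\mathbb{R}^{2n})$. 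In every case the seminorms of $b_\iota$ are dominated by those of $a$.

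Finally, the semiclassical Calder\'on--Vaillancourt theorem (see \cite{Zworski12}; alternatively, reduce to the classical statement by the unitary rescaling $u\mapsto\hbar^{n/4}u(\hbar^{1/2}\,\cdot\,)$, under which $b_\iota^\mathrm{w}(x,\hbar D)$ becomes $b_{\iota,\hbar}^\mathrm{w}(x,D)$ with $b_{\iota,\hbar}(x,\xi)=b_\iota(\hbar^{1/2}x,\hbar^{1/2}\xi)$, still bounded in $S^0_{0,0}$ uniformly for $\hbar\le 1$) yields
\[
    \bigl\|b_\iota^\mathrm{w}(x,\hbar D)\bigr\|_{L^2\to L^2}\le C_n\sum_{|\alpha|+|\beta|\le N(n)}\sup_{\mathbb{R}^{2n}}\bigl|\partial_x^\alpha\partial_\xi^\beta b_\iota\bigr|,
\]
with $C_n,N(n)$ depending only on $n$. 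Summing over the finite set $I$ gives boundedness of $\Oph(a)$ on $C_c^\infty(M;\Omega^{1/2})$, and hence, by density, on all of $L^2(M;\Omega^{1/2})$. For the last assertion, if $a=a(\tau)$ is bounded in $S^0_\mathrm{cyl}(T^*M)$ uniformly in $\tau$, then the right-hand side above (and $\|\chi_\iota\|_{L^\infty}$, which is $\tau$-independent) is bounded uniformly in $\tau$, so $\|\Oph(a(\tau))\|_{L^2\to L^2}$ is too. I expect the main obstacle to be the second paragraph: the patches $\varphi_\iota$ with $\iota\in I_\infty$ are non-compact, so one cannot simply cite $L^2$ boundedness of compactly supported pseudodifferential operators; one must verify the \emph{global} (as $r\to\infty$) symbol bounds, which is exactly what the class $S^0_\mathrm{cyl}(T^*M)$ encodes, while also taking care of the zero extension near $r=0$.
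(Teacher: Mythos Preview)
Your proof is correct and follows essentially the same route as the paper's: decompose $\Oph(a)$ into the finite sum \eqref{eq_psido_cyl}, use the unitarity of $\varphi_\iota^*$, $\varphi_{\iota*}$ and the boundedness of multiplication by $\chi_\iota$, and then apply the Calder\'on--Vaillancourt theorem to each local symbol $\tilde\varphi_{\iota*}(\kappa_\iota a)\in S^0(T^*\mathbb{R}^n)$. The paper's proof is a single sentence to this effect; you have simply written out the verification that the local symbols really do lie in the required class, with special attention to the non-compact charts $\iota\in I_\infty$ where the global bounds as $r\to\infty$ come from the definition of $S^0_\mathrm{cyl}(T^*M)$, and to the zero extension near $r=0$, which is justified because $\rmop{supp}\kappa_\iota$ is closed in $M$, contained in the open end $E$, and disjoint from the compact set $M\setminus E$.
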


\begin{proof}
    Each terms $\chi_\iota \varphi_\iota^*(\tilde\varphi_{\iota*}(\kappa_\iota a))^\mathrm{w}(x, \hbar D)\varphi_{\iota*}\chi_\iota$ in the definition \eqref{eq_psido_cyl} of pseudodifferential operators is bounded on $L^2(M; \Omega)$ by the Calder\'{o}n-Vaillancourt theorem for $\tilde\varphi_{\iota*}(\kappa_\iota a)\in S^0 (T^*\mathbb{R}^n)$ and the unitarity of the pull back $\varphi^*$ and the push forward $\varphi_*$. Thus the finite sum \eqref{eq_psido_cyl} over $\iota\in I$ is also a bounded operator on $L^2(M; \Omega^{1/2})$. 
\end{proof}

\begin{theo}[Sharp G\aa rding inequality]\label{theo_sharp_garding}
    For every $a\in S^0_\mathrm{cyl}(T^*M)$ with $\rmop{Re} a\geq 0$, there exists a real symbol $b(\hbar; x, \xi)\in S^0_\mathrm{cyl}(T^*M)$ such that the inequality 
        \begin{equation}\label{eq_sharp_garding}
            \rmop{Re} \Oph (a)\geq -\hbar \Oph (b)+O_{L^2\to L^2}(\hbar^\infty)
        \end{equation}
    holds and $\rmop{supp}b \subset \rmop{supp} a$ mod $O(\hbar^\infty)$. 
    
    Furthermore, if the symbol $a$ also depends on some parameter $\tau\in \Omega$ and are uniformly bounded in $S^0_\mathrm{cyl}(T^*M)$, then the $O_{L^2\to L^2}(\hbar^\infty)$ in \eqref{eq_sharp_garding}, and the symbol $b\in S^0_\mathrm{cyl}(T^*M)$ itself are uniformly bounded with respect to $\tau\in\Omega$. 
\end{theo}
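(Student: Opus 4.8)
The plan is to reduce the inequality to the Euclidean sharp G\aa rding inequality chart by chart, using the fixed partition of unity $\{\kappa_\iota\}_{\iota\in I}$ from Definition \ref{defi_quantization}, and then to patch the local error symbols into one global symbol. First I would observe that $\Oph$ is linear in the symbol and that $\Oph(a)^*=\Oph(\bar a)$: this is immediate from \eqref{eq_psido_cyl}, since the pull-backs $\varphi_\iota^*,\varphi_{\iota*}$ are unitary, multiplication by the real cylindrical function $\chi_\iota$ is self-adjoint, and $(b^{\mathrm w}(x,\hbar D))^*=\bar b^{\mathrm w}(x,\hbar D)$ for the Weyl quantization on half-densities (it acts as the ordinary Weyl quantization on the density coefficient). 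Hence $\rmop{Re}\Oph(a)=\Oph(\rmop{Re}a)$, so it suffices to treat $a=\rmop{Re}a\ge 0$ real and to produce a real $b\in S^0_\mathrm{cyl}(T^*M)$ with $\rmop{supp}b\subset\rmop{supp}a$ modulo $O(\hbar^\infty)$ and $\Oph(a)\ge -\hbar\Oph(b)+O_{L^2\to L^2}(\hbar^\infty)$. By \eqref{eq_psido_cyl}, $\Oph(a)=\sum_{\iota\in I}\chi_\iota\varphi_\iota^*c_\iota^{\mathrm w}(x,\hbar D)\varphi_{\iota*}\chi_\iota$ with $c_\iota:=\tilde\varphi_{\iota*}(\kappa_\iota a)$, and $c_\iota\ge 0$ because $\kappa_\iota\ge 0$.

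Next I would apply the Euclidean Friedrichs symmetrization to each $c_\iota$ (the classical argument, see \cite{Zworski12}). For $\iota\in I_K$ this is the standard statement for $S^0(T^*\mathbb{R}^n)$ symbols of compact $x$-support; for $\iota\in I_\infty$ one uses its analogue, uniform on the cylinder $[1,\infty)\times\mathbb{R}^{n-1}$, for the H\"ormander class attached to the flat metric $\diff r^2+\diff\theta^2$ (which follows from the classical case, e.g.\ by partitioning $r$ into unit intervals, since that metric is translation-invariant in $r$), after extending $c_\iota$ by zero off $V_\iota$ as in \eqref{eq_psido_cyl}. In either case one gets the anti-Wick quantization $\mathrm{Op}_\hbar^{\mathrm{AW}}(c_\iota)\ge 0$ and a real symbol $b_\iota$ in the relevant class whose asymptotic expansion consists of universal constants times even-order derivatives of $c_\iota$, so that $\rmop{supp}b_\iota\subset\rmop{supp}c_\iota$ modulo $O(\hbar^\infty)$ and $c_\iota^{\mathrm w}(x,\hbar D)+\hbar\, b_\iota^{\mathrm w}(x,\hbar D)=\mathrm{Op}_\hbar^{\mathrm{AW}}(c_\iota)+O_{L^2\to L^2}(\hbar^\infty)$. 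Conjugating by $\chi_\iota\varphi_\iota^*(\cdot)\varphi_{\iota*}\chi_\iota$ and using $(\varphi_{\iota*}\chi_\iota)^*=\chi_\iota\varphi_\iota^*$, the operator $\chi_\iota\varphi_\iota^*\mathrm{Op}_\hbar^{\mathrm{AW}}(c_\iota)\varphi_{\iota*}\chi_\iota=(\varphi_{\iota*}\chi_\iota)^*\mathrm{Op}_\hbar^{\mathrm{AW}}(c_\iota)(\varphi_{\iota*}\chi_\iota)$ is nonnegative, while the $O(\hbar^\infty)$ remainder stays $O_{L^2\to L^2}(\hbar^\infty)$ after conjugation by the bounded multiplications $\chi_\iota$ and the unitaries. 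Summing over the finite set $I$ gives $\Oph(a)+\hbar B\ge O_{L^2\to L^2}(\hbar^\infty)$, where $B:=\sum_{\iota\in I}\chi_\iota\varphi_\iota^*b_\iota^{\mathrm w}(x,\hbar D)\varphi_{\iota*}\chi_\iota$.

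Finally I would identify $B$ as $\Oph(b)+O_{L^2\to L^2}(\hbar^\infty)$ for a single $b\in S^0_\mathrm{cyl}(T^*M)$ with $\rmop{supp}b\subset\rmop{supp}a$ modulo $O(\hbar^\infty)$. Each summand $\chi_\iota\varphi_\iota^*b_\iota^{\mathrm w}(x,\hbar D)\varphi_{\iota*}\chi_\iota$ has Schwartz kernel supported in $\rmop{supp}\chi_\iota\times\rmop{supp}\chi_\iota\subset U_\iota\times U_\iota$, so by the pseudodifferential calculus developed in this section it equals $\Oph(b^{(\iota)})+O_{L^2\to L^2}(\hbar^\infty)$ for a symbol $b^{(\iota)}\in S^0_\mathrm{cyl}(T^*M)$ supported in $T^*U_\iota$, with leading part $\tilde\varphi_\iota^*\bigl((\varphi_{\iota*}\chi_\iota)^2 b_\iota\bigr)$ and $\rmop{supp}b^{(\iota)}\subset\rmop{supp}c_\iota$ modulo $O(\hbar^\infty)$; it lies in $S^0_\mathrm{cyl}(T^*M)$ because for $\iota\in I_K$ it has compact $x$-support and for $\iota\in I_\infty$ the cylindrical bounds on $b_\iota$ together with the cylindricity of $\chi_\iota$ give the uniform estimates as $r\to\infty$. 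Then $b:=\sum_{\iota\in I}b^{(\iota)}$ is real, $B=\Oph(b)+O_{L^2\to L^2}(\hbar^\infty)$, hence $\Oph(a)\ge -\hbar\Oph(b)+O_{L^2\to L^2}(\hbar^\infty)$; and since $\rmop{supp}c_\iota=\tilde\varphi_\iota(\rmop{supp}(\kappa_\iota a))\subset\tilde\varphi_\iota(\rmop{supp}a\cap T^*U_\iota)$, one has $\rmop{supp}b^{(\iota)}\subset\tilde\varphi_\iota^{-1}(\rmop{supp}c_\iota)\subset\rmop{supp}a$, so $\rmop{supp}b\subset\rmop{supp}a$ modulo $O(\hbar^\infty)$. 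The uniform statement follows because every construction above (the symmetrization, the Borel sums, the change-of-variables expansions) depends boundedly on $a\in S^0_\mathrm{cyl}(T^*M)$. I expect the main obstacle to be precisely this reassembly: chart by chart everything is classical, but patching the locally produced $b_\iota$ into a genuine symbol of class $S^0_\mathrm{cyl}(T^*M)$—checking the defining estimates in every chart, including the uniform-in-$r$ ones—while preserving $\rmop{supp}b\subset\rmop{supp}a$ modulo $O(\hbar^\infty)$ requires the coordinate-invariance of the calculus of this section and some bookkeeping with supports.
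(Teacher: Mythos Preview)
Your proposal is correct and follows essentially the same route as the paper: localize via the partition of unity in Definition~\ref{defi_quantization}, apply the Euclidean sharp G\aa rding inequality (the paper's Theorem~\ref{theo_sharp_garding_euclid}, proved via the anti-Wick/FBI comparison of Proposition~\ref{prop_fbi_fundamental}) to each $c_\iota=\tilde\varphi_{\iota*}(\kappa_\iota a)\ge 0$, and then reassemble the local error symbols $b_\iota$ into a single $b\in S^0_\mathrm{cyl}(T^*M)$; the paper does this last step in one stroke via Lemma~\ref{lemm_psido_local_global} (which applies because $\rmop{supp}b_\iota\subset\rmop{supp}\tilde\varphi_{\iota*}(\kappa_\iota a)\subset\pi^{-1}(\rmop{supp}\varphi_{\iota*}\kappa_\iota)$), whereas you convert each summand to $\Oph(b^{(\iota)})$ separately---the content is the same. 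One small simplification: your detour through ``partitioning $r$ into unit intervals'' for $\iota\in I_\infty$ is unnecessary, since the extended symbol $c_\iota$ already lies in $S^0(T^*\mathbb{R}^n)$ with uniform bounds (by cylindricity of $\kappa_\iota$ and the $S^0_\mathrm{cyl}$ estimates on $a$), so Theorem~\ref{theo_sharp_garding_euclid} applies directly.
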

We prove Theorem \ref{theo_psido_composition} and Theorem \ref{theo_sharp_garding} in Subsection \ref{subs_proof_psido}. 

In order to treat $[A_\hbar (t), H]$ in \eqref{eq_positive_heisenberg_derivative}, we represent the semiclassical Laplacian
\[
    -\hbar^2\triangle_g (u|\vol_g|^{1/2}):=\rmop{div}(\grad u)|\vol_g|^{1/2}
\]
as a pseudodifferential operator: 

\begin{theo}\label{theo_laplacian_psido}
    We have 
    \[
        -\hbar^2\triangle_g=\Oph (|\xi|_{g^*}^2+\hbar^2 V_g), 
    \]
    where $V_g\in C^\infty (M)$ is defined as 
    \[
        V_g(x):=\sum_{\iota\in I}\sum_{j, k=1}^n\left(\frac{1}{4}\partial_{x_j}\partial_{x_k}(\kappa_\iota g_\iota^{jk})+g_\iota^{-1/4}\partial_{x_j}(\kappa_\iota g_\iota^{jk}\partial_{x_k}g_\iota^{1/4})\right), 
    \]
    $g_\iota^{jk}$, $g_\iota$ are defined on $U_\iota$ as 
    \[
        (g_\iota^{jk}):=(g^\iota_{jk})^{-1}, \quad 
        g_\iota:=\sqrt{\det (g^\iota_{jk})}, \quad 
        \text{where } g=\sum_{j, k=1}^n g^\iota_{jk}\diff x_j \diff x_k. 
    \]
    Furthermore, $V_g$ belongs to the symbol class $S^0_\mathrm{cyl}(T^*M)$. 
\end{theo}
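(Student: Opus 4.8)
The plan is to work patch-by-patch, reducing everything to a local computation on $\mathbb{R}^n$ and then reassembling via the partition of unity. First I would recall that the Laplace--Beltrami operator acting on functions in a local chart $\varphi_\iota : U_\iota \to V_\iota$ has the familiar form $\triangle_g \tilde u = g_\iota^{-1/2}\sum_{j,k}\partial_{x_j}(g_\iota^{1/2} g_\iota^{jk}\partial_{x_k}\tilde u)$, where $g_\iota = \sqrt{\det(g^\iota_{jk})}$. Conjugating by the density factor to pass to the half-density picture (as described in the Remark after Definition \ref{defi_quantization}), one gets that $-\hbar^2\triangle_g$, restricted to the chart and acting on half-densities $\tilde u\,|\vol_g|^{1/2}$, equals $g_\iota^{-1/4}\bigl(-\hbar^2\sum_{j,k}\partial_{x_j} g_\iota^{1/2} g_\iota^{jk}\partial_{x_k}\bigr)g_\iota^{-1/4}$ applied to $\tilde u$. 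So the core of the argument is the following elementary symbol computation on $\mathbb{R}^n$: write $P = g^{-1/4}\bigl(-\hbar^2\sum_{j,k}\partial_{x_j} g^{1/2} g^{jk}\partial_{x_k}\bigr)g^{-1/4}$ and compute its Weyl symbol.

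The key step is therefore to expand $P$ and identify its full Weyl symbol. Writing $-\hbar^2\partial_{x_j} = (\hbar D_{x_j})(\cdot)$ up to the factor $-i$ absorbed in $D$, and expanding the product of multiplication operators and $\hbar D$'s, one obtains a second-order differential operator whose principal part is $\sum_{j,k}g^{jk}(\hbar\xi_j)(\hbar\xi_k) = |\xi|_{g^*}^2$ (this is the only term of order $\hbar^0$ in the symbol). The crucial point, and where one must be slightly careful, is that for the \emph{Weyl} quantization of a real differential operator there is no first-order ($\hbar^1$) term: the Weyl symbol of a formally self-adjoint operator is real, and since $|\xi|_{g^*}^2$ is already real and of order two in $\xi$, the remainder is of order $\hbar^2$ and is a function of $x$ alone (all the $\xi$-dependence having been exhausted by the two derivatives). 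Computing that remainder explicitly by moving the $g^{1/2}g^{jk}$ and $g^{-1/4}$ factors through the derivatives --- i.e. applying the Leibniz rule and collecting all terms in which both $\hbar D$'s have hit coefficient functions --- yields exactly $\hbar^2$ times the displayed expression $\frac14\partial_{x_j}\partial_{x_k}(g^{jk}) + g^{-1/4}\partial_{x_j}(g^{jk}\partial_{x_k}g^{1/4})$, summed over $j,k$ (for the single chart; the $\kappa_\iota$ and the sum over $\iota$ appear only upon reassembly). I expect this bookkeeping --- making sure the Weyl ordering is respected and that no spurious $\hbar^1$ term survives --- to be the main technical obstacle, though it is purely a finite computation.

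Finally I would reassemble: applying the above in each chart of the fixed atlas $\{\varphi_\iota\}$, inserting the partition of unity $\{\kappa_\iota\}$ and the cutoffs $\chi_\iota$ from Definition \ref{defi_quantization}, and using that $\chi_\iota = 1$ near $\mathrm{supp}\,\kappa_\iota$ so that the cutoffs do not disturb the symbol, one gets $-\hbar^2\triangle_g = \Oph(|\xi|_{g^*}^2 + \hbar^2 V_g)$ with $V_g$ as displayed. It remains to check $V_g \in S^0_\mathrm{cyl}(T^*M)$: in any local coordinate $V_g$ is manifestly smooth and independent of $\xi$, so the local estimates are immediate, and in polar coordinates on the end $E$ the required bounds $|\partial_{r,\theta}^\alpha V_g|\leq C_\alpha$ follow from Assumption \ref{assu_higher_derivative} (boundedness of all derivatives of $h^*$ and the short-range bounds on $c$), since $V_g$ is built algebraically from $g^{jk}$, $g^{1/2}$ and their derivatives, all of which are controlled there. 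This last point is routine given the standing assumptions.
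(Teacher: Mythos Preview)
Your overall strategy---local Weyl-symbol computation plus reassembly via the partition of unity---is the same as the paper's, and the $S^0_\mathrm{cyl}$ verification is handled the same way. But there is a real gap in your reassembly step that you should close.

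You compute, in a single chart and \emph{without} $\kappa_\iota$, that the Weyl symbol of the half-density Laplacian is $|\xi|_{g^*}^2+\hbar^2\bigl(\tfrac14\partial_{x_j}\partial_{x_k}g^{jk}+g^{-1/4}\partial_{x_j}(g^{jk}\partial_{x_k}g^{1/4})\bigr)$, and then say ``the $\kappa_\iota$ and the sum over $\iota$ appear only upon reassembly.'' This does not match the statement: in the displayed $V_g$ the cutoff $\kappa_\iota$ sits \emph{inside} the derivatives, i.e.\ one has $\tfrac14\partial_{x_j}\partial_{x_k}(\kappa_\iota g_\iota^{jk})$, not $\kappa_\iota\cdot\tfrac14\partial_{x_j}\partial_{x_k}g_\iota^{jk}$. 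The reason is that, by Definition~\ref{defi_quantization}, $\Oph(a)$ quantizes $\kappa_\iota a$ in each chart, and the Weyl quantization of $\kappa_\iota|\xi|_{g^*}^2$ is \emph{not} $\kappa_\iota$ times the Weyl quantization of $|\xi|_{g^*}^2$: the $\hbar^2$ correction picks up terms where the derivatives hit $\kappa_\iota$. Simply multiplying your local potential by $\kappa_\iota$ and summing therefore gives the wrong $V_g$.

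The paper fixes this by first writing the decomposition
\[
-\hbar^2\triangle_g u=\sum_{\iota\in I}\chi_\iota\,\rmop{div}\bigl(\kappa_\iota\,\rmop{grad}(\chi_\iota u)\bigr),
\]
which is exact because $\sum_\iota\kappa_\iota=1$ and $\chi_\iota=1$ on $\rmop{supp}\kappa_\iota$, and then computing directly the Weyl quantization of $\kappa_\iota|\xi|_{g^*}^2$ (rather than of $|\xi|_{g^*}^2$ alone). That computation yields $-\rmop{div}(\kappa_\iota\rmop{grad}\,\cdot\,)-V_\iota$ with $V_\iota$ exactly the $\iota$-th summand of $V_g$, $\kappa_\iota$ inside the derivatives. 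Summing then gives the claimed identity. Your plan becomes correct if you insert $\kappa_\iota$ \emph{before} doing the local Weyl-symbol computation, not after.
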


\begin{proof}
    We decompose $-\hbar^2\triangle_g$ into 
    \begin{equation}\label{eq_laplacian_decomposition}
        -\hbar^2 \triangle_g u=\sum_{\iota\in I} \chi_\iota \rmop{div} (\kappa_\iota \rmop{grad} (\chi_\iota u)). 
    \end{equation}
        
    Direct calculation of $(\tilde \varphi_{\iota*}(\kappa_\iota |\xi|_{g^*}^2))^\mathrm{w}(x, \hbar D)$ shows that 
    \[
        \varphi_\iota^*(\tilde \varphi_{\iota*}(\kappa_\iota |\xi|_{g^*}^2))^\mathrm{w}(x, \hbar D)\varphi_{\iota *}u=-\rmop{div} (\kappa_\iota \rmop{grad}u)-V_\iota u
    \]
    and 
    \begin{equation}\label{eq_geometric_potential_local}
        V_\iota(x):=\frac{1}{4}\partial_{x_j}\partial_{x_k}(\kappa_\iota g_\iota^{jk})
        +g_\iota^{-1/4}\partial_{x_j}(\kappa_\iota g_\iota^{jk}\partial_{x_k}g_\iota^{1/4}). 
    \end{equation}
    Thus, by \eqref{eq_laplacian_decomposition}, we have
    \[
        -\hbar^2 \triangle_g u=\sum_{\iota \in I} \chi_\iota \varphi_\iota^*(\tilde \varphi_{\iota*}(\kappa_\iota |\xi|_{g^*}^2))^\mathrm{w}(x, \hbar D)\varphi_{\iota *}(\chi_\iota u)
        +\sum_{\iota \in I} V_\iota \chi_\iota^2 u. 
    \]
    $V_\iota \chi_\iota^2$ by $\chi_\iota =1$ on $\rmop{supp}V_\iota$ and $V_g=\sum_{\iota\in I}V_\iota$ implies that 
    \[
        \sum_{\iota \in I} V_\iota \chi_\iota^2 u=Vu=\sum_{\iota \in I}\chi_\iota \varphi_\iota^* (\tilde\varphi_{\iota*}(\kappa_\iota V))^\mathrm{w}\varphi_{\iota*}(\chi_\iota u)=\Oph (V)u. 
    \]

    We have to show that $V_g\in S^0_\mathrm{cyl}(T^*M)$. The problem is the behavior of derivatives of $g^{jk}_\iota$ and $g_\iota$ for $\iota\in I_\infty$. By \eqref{eq_metric_polar}, we have 
    \begin{align*}
        g^{jk}_\iota&=
        \begin{cases}
            c(r, \theta)^{-2} & \text{if } j=k=1, \\
            h^{j-1, k-1}_\iota (r, \theta) & \text{if } j, k\geq 2, \\
            0 & \text{otherwise}, 
        \end{cases} \\ 
        g_\iota &=c(r, \theta)^2 h_\iota (r, \theta), 
    \end{align*}
    where 
    \[
        (h^{jk}_\iota):=(h^\iota_{jk})^{-1}, \quad h_\iota:=\det (h^\iota_{jk}), \quad h(r, \theta, \diff \theta)=\sum_{j, k=1}^{n-1} h^\iota_{jk}(r, \theta)\diff \theta_j \diff \theta_k. 
    \]
    Assumption \ref{assu_classical} \ref{assu_sub_short_range} (in particular $c\to 1$ as $r\to \infty$) and Assumption \ref{assu_higher_derivative} implies the boundedness of 
    \[
        |\partial_{r, \theta}^\alpha \partial_{x_j}\partial_{x_k}(\kappa_\iota g_\iota^{jk})|
    \]
    and 
    \[
        |\partial_{r, \theta}^\alpha (g_\iota^{-1/4}\partial_{x_j}(\kappa_\iota g_\iota^{jk}\partial_{x_k}g_\iota^{1/4}))|
    \]
    in \eqref{eq_geometric_potential_local}. This shows that $V_\iota\in S^0_\mathrm{cyl}(T^*M)$ for $\iota\in I_\infty$. 
\end{proof}

\begin{rema*}
    $V_g(x)$ depends on choices of atlas on $M$. 
\end{rema*}

\subsection{Proof of Theorem \ref{theo_psido_composition} and Theorem \ref{theo_sharp_garding}}\label{subs_proof_psido}

It is useful to introduce a notation of pseudodifferential operators associated with locally defined symbols. 

\begin{defi}\label{defi_local_psido}
    For $a_\iota\in S^m_\mathrm{cyl}(T^*\mathbb{R}^n)$ and $u\in C_c^\infty (M; \Omega^{1/2})$, we define
    \[
        \Ophloc (a_\iota)u:=\chi_\iota \varphi_\iota^* a_\iota^\mathrm{w}(x, \hbar D)\varphi_{\iota*}(\chi_\iota u). 
    \]
\end{defi}

The operators in Definition \ref{defi_local_psido} are represented by a quantization of globally defined symbols.

\begin{lemm}\label{lemm_psido_local_global}
    Assume that symbols $a_\iota\in S^m(T^*\mathbb{R}^n)$ satisfy $\rmop{supp}a_\iota \subset \pi^{-1}(\rmop{supp} \varphi_{\iota*} \kappa_\iota)$ for all $\iota\in I$, where $\pi: T^*M \to M$ is the natural projection. Then there exists $a(\hbar; x, \xi)\in S^m_\mathrm{cyl}(T^*M)$ such that 
    \begin{equation}\label{eq_psido_local_global}
        \sum_{\iota\in I}\Ophloc (a_\iota)=\Oph (a)+O_{L^2\to L^2}(\hbar^\infty). 
    \end{equation}
    This symbol $a\in S^m_\mathrm{cyl}(T^*M)$ satisfies $\rmop{supp}a\subset \rmop{supp}a_0$ modulo $O(\hbar^\infty)$ where 
    \[
        a_0(x, \xi)=\sum_{\iota\in I} \tilde \varphi_\iota^* a_\iota (x, \xi). 
    \]

    Furthermore, if the symbols $a_\iota$ also depend on some parameter $\tau\in \Omega$ and are uniformly bounded in $S^m(T^*\mathbb{R}^n)$, then the $O_{L^2\to L^2}(\hbar^\infty)$ in \eqref{eq_psido_local_global} are uniformly bounded with respect to $\tau\in\Omega$. 
\end{lemm}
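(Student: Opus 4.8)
\emph{Plan of proof.} The idea is to realise $P:=\sum_{\iota\in I}\Ophloc(a_\iota)$ as $\Oph(a)$ for a symbol $a$ built by an asymptotic expansion, reducing everything to the Euclidean symbol calculus already recalled above — composition of symbols, the change-of-variables formula (with leading term the canonical pullback $\tilde\gamma^{*}$ and vanishing first-order correction for half-densities), $L^2$-boundedness and Borel summation — together with semiclassical pseudolocality: an operator $\chi\varphi^{*}b^{\mathrm w}(x,\hbar D)\varphi_{*}\tilde\chi$ whose cutoffs have disjoint supports is $O_{L^2\to L^2}(\hbar^\infty)$, and more generally a cutoff equal to $1$ on the base support of $b$ may be inserted or deleted modulo $O_{L^2\to L^2}(\hbar^\infty)$, with constants uniform in any parameters. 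Put $\psi_\iota:=\varphi_{\iota*}\kappa_\iota$, so the hypothesis reads $\rmop{supp}a_\iota\subset\pi^{-1}(\rmop{supp}\psi_\iota)$, and set $a_0:=\sum_{\iota\in I}\tilde\varphi_\iota^{*}a_\iota$.

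The first step is one reduction of $\Ophloc(a_\iota)$. Inserting $1=\sum_\kappa\kappa_\kappa$ (before and after the symbol) and invoking pseudolocality together with the support hypothesis localises the problem: each contribution may be taken with all cutoffs supported in an overlap $U_\iota\cap U_\kappa$, modulo $O(\hbar^\infty)$. On such an overlap I transport from the chart $\varphi_\iota$ to the chart $\varphi_\kappa$ by the change-of-variables formula and absorb the factor $\kappa_\kappa$ by the composition formula. Summing over $\kappa$ and then over $\iota$ yields
\[
    P=\Oph(a_0)+\hbar\sum_{\iota\in I}\Ophloc\bigl(e_\iota^{(1)}\bigr)+O_{L^2\to L^2}(\hbar^\infty),
\]
where each $e_\iota^{(1)}\in S^{m-1}(T^{*}\mathbb{R}^n)$ again has base support in $\rmop{supp}\psi_\iota$ — so $\{e_\iota^{(1)}\}_\iota$ satisfies precisely the hypothesis of the lemma, one order lower. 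The reason $a_0$ is the leading term is the identity $\psi_\iota\cdot\tilde\varphi_{\iota*}\bigl(\sum_\kappa\tilde\varphi_\kappa^{*}a_\kappa\bigr)=\tilde\varphi_{\iota*}(\kappa_\iota a_0)$: the contributions of all charts $\kappa$, transported to chart $\iota$ and cut off by $\psi_\iota$, reassemble exactly into the $\iota$-th symbol appearing in $\Oph(a_0)$.

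Iterating, for every $N\ge 1$ one obtains
\[
    P=\Oph\!\Bigl(\textstyle\sum_{k=0}^{N-1}\hbar^{k}a_0^{(k)}\Bigr)+\hbar^{N}\sum_{\iota\in I}\Ophloc\bigl(e_\iota^{(N)}\bigr)+O_{L^2\to L^2}(\hbar^\infty),
\]
with $e_\iota^{(N)}\in S^{m-N}(T^{*}\mathbb{R}^n)$ sharing the support property and $a_0^{(k)}:=\sum_\iota\tilde\varphi_\iota^{*}e_\iota^{(k)}$ (so $a_0^{(0)}=a_0$). One checks that each $a_0^{(k)}\in S^{m-k}_{\mathrm{cyl}}(T^{*}M)$: the local Kohn--Nirenberg bounds are preserved under the finitely many coordinate changes involved, and in a polar chart near the end the support hypothesis confines $\theta$ to a compact set while the $r$-uniformity survives because transition maps between polar charts are independent of $r$ (overlaps with the charts indexed by $I_K$ occur only for bounded $r$). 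Defining $a\in S^{m}_{\mathrm{cyl}}(T^{*}M)$ by $a\sim\sum_{k\ge 0}\hbar^{k}a_0^{(k)}$ (Borel) and using $L^2$-boundedness to estimate the remainder $\hbar^{N}\sum_\iota\Ophloc(e_\iota^{(N)})$ by $O(\hbar^{N})$ once $N$ is large gives $P=\Oph(a)+O_{L^2\to L^2}(\hbar^\infty)$. Since every correction $a_0^{(k)}$ is obtained only by transporting, multiplying and composing symbols whose supports lie inside $\bigcup_\iota\tilde\varphi_\iota^{-1}(\rmop{supp}a_\iota)$, one gets $\rmop{supp}a\subset\rmop{supp}a_0$ modulo $O(\hbar^\infty)$; the uniformity assertion is automatic because every estimate above is uniform in the parameter $\tau$.

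The step I expect to be the main obstacle is this first reduction: the bookkeeping over chart overlaps must be done carefully enough to see that, after transporting between charts and reinserting $\kappa_\kappa$, the new symbols $e_\iota^{(N)}$ genuinely retain the base-support condition $\rmop{supp}e_\iota^{(N)}\subset\pi^{-1}(\rmop{supp}\psi_\iota)$ — which is what keeps $\Ophloc$ meaningful and closes the induction — and, simultaneously, that the globalised symbols $a_0^{(k)}$ remain in the cylindrical class $S^{m-k}_{\mathrm{cyl}}(T^{*}M)$ with bounds uniform as $r\to\infty$, where the change-of-variables pushforwards must not disturb the $r$-uniform estimates. The remaining ingredients are the standard semiclassical pseudolocality and symbol calculus on $\mathbb{R}^n$.
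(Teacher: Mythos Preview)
Your proposal is correct and follows essentially the same route as the paper's proof: both construct the symbol $a$ by an iterative scheme in which one compares $\Oph(a_0)$ with $\sum_{\iota}\Ophloc(a_\iota)$, exhibits the difference as $\hbar$ times a sum of the same shape (local symbols with base support in $\rmop{supp}\psi_\iota$), and then Borel-sums the resulting $a_0^{(k)}$. The only cosmetic difference is the direction of the computation---the paper expands $\Oph(a_0)$ via its definition, changes variables to chart $\iota'$, and reads off $\sum_{\iota'}\Ophloc(a_{\iota'}-\hbar b_{1,\iota'})$, whereas you start from $\sum_\iota\Ophloc(a_\iota)$ and transport into the charts appearing in $\Oph(a_0)$; the chart manipulations, the use of pseudolocality to insert/remove cutoffs, the change-of-variables formula for half-densities (vanishing $\hbar^1$ term), and the support bookkeeping are the same in both.
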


\begin{proof}
    The explicit form of $\Oph (a_0)$ is
    \begin{align}
        \Oph (a_0)
        &=\sum_{U_\iota \cap U_{\iota^\prime}\neq \varnothing} 
        \chi_\iota \varphi_\iota^* (\tilde\varphi_{\iota*}(\kappa_\iota \tilde\varphi_{\iota^\prime}^*a_{\iota^\prime}))^\mathrm{w}(x, \hbar D)(\varphi_{\iota*}\chi_\iota)\varphi_{\iota*} \nonumber\\
        &=\sum_{U_\iota \cap U_{\iota^\prime}\neq \varnothing} 
        \chi_\iota \varphi_{\iota^\prime}^* (\kappa_\iota a_{\iota^\prime}+O_{S^{m-2}}(\hbar^2))^\mathrm{w}(x, \hbar D)(\varphi_{\iota^\prime *}\chi_\iota)\varphi_{\iota^\prime *} \nonumber\\
        &=\sum_{\iota^\prime \in I} 
        \varphi_{\iota^\prime}^* (a_{\iota^\prime}+O_{S^{m-2}}(\hbar^2))^\mathrm{w}(x, \hbar D)\varphi_{\iota^\prime *}+O_{L^2\to L^2}(\hbar^\infty) \nonumber\\
        &=\sum_{\iota\in I} \Ophloc \left( a_\iota-\hbar b_{1, \iota}\right)+O_{L^2\to L^2}(\hbar^\infty) \label{eq_a0_local_global_II}
    \end{align}
    by changing variables of pseudodifferential operators and the assumption $\rmop{supp}a_\iota \subset \rmop{supp}\varphi_{\iota*}\kappa_\iota$. Here $b_{1, \iota}=b_{1, \iota}(\hbar; x, \xi)\in S^{m-1}(T^*\mathbb{R}^n)$ has an asymptotic expansion 
    \[
        b_{1, \iota}(\hbar; x, \xi)\sim \sum_{j=0}^\infty \hbar^j b_{1j, \iota}(x, \xi), \quad b_{1j, \iota}\in S^{m-j-1}(T^*\mathbb{R}^n)
    \]
    with $\rmop{supp} b_{1j, \iota}\subset \rmop{supp}\varphi_{\iota*}(\kappa_\iota a_0)$. 

    We repeat the same argument for $b_{10, \iota}(x, \xi)$. If we set 
    \[
        a_1(x, \xi):=-\sum_{\iota\in I} \tilde\varphi_\iota^* b_{10, \iota}(x, \xi), 
    \]
    then we have 
    \begin{equation}\label{eq_a1_local_global_II}
        \Oph (a_1)=\sum_{\iota\in I} \Ophloc \left( b_{10, \iota}-\hbar c_{2, \iota}\right)+O_{L^2\to L^2}(\hbar^\infty). 
    \end{equation}
    Here $c_{2, \iota}(\hbar; x, \xi)\in S^{m-2}(T^*\mathbb{R}^n)$ has an asymptotic expansion 
    \[
        c_{2, \iota}(\hbar; x, \xi)\sim \sum_{j=0}^\infty \hbar^j c_{2j, \iota}(x, \xi), \quad c_{2j, \iota}\in S^{m-j-2}(T^*\mathbb{R}^n)
    \]
    with $\rmop{supp} c_{2j, \iota}\subset \rmop{supp}\varphi_{\iota*}(\kappa_\iota a_0)$. 

    Summing up \eqref{eq_a0_local_global_II} and \eqref{eq_a1_local_global_II}$\times \hbar$, we obtain 
    \[
        \Oph (a_0+\hbar a_1)=\sum_{\iota\in I} \Ophloc (a_\iota-\hbar^2 b_{2, \iota})+O_{L^2\to L^2}(\hbar^\infty), 
    \]
    where 
    \[
        b_{2, \iota}(\hbar; x, \xi):=\hbar^{-1}(b_{1, \iota}-b_{10, \iota})+c_{2, \iota} \in S^{m-2}(T^*\mathbb{R}^n). 
    \]
    $b_{2, \iota}(\hbar; x, \xi)$ has an asymptotic expansion 
    \[
        b_{2, \iota}(\hbar; x, \xi)\sim \sum_{j=0}^\infty \hbar^j b_{2j, \iota}(x, \xi), \quad b_{2j, \iota}\in S^{m-j-2}(T^*\mathbb{R}^n)
    \]
    with $\rmop{supp} b_{2j, \iota}\subset \rmop{supp}\varphi_{\iota*}(\kappa_\iota a_0)$. 

    We repeat this argument and construct $a_j\in S^{m-j}_\mathrm{cyl}(T^*M)$ such that 
    \[
        \Oph \left( \sum_{j=0}^N \hbar^j a_j\right)
        =\sum_{\iota\in I} \Ophloc (a_\iota-\hbar^{N+1} b_{N+1, \iota})+O_{L^2\to L^2}(\hbar^\infty)
    \]
    for all $N\in \mathbb{Z}_{\geq 0}$, where $b_{N+1, \iota}(\hbar; x, \xi)\in S^{N+1-j}(T^*\mathbb{R}^n)$ has an asymptotic expansion
    \[
        b_{N+1, \iota}(\hbar; x, \xi)\sim \sum_{j=0}^\infty \hbar^j b_{N+1, j, \iota}(x, \xi), \quad b_{N+1, j, \iota}\in S^{m-j-N-1}(T^*\mathbb{R}^n)
    \]
    with $\rmop{supp} b_{N+1, j, \iota}\subset \rmop{supp}\varphi_{\iota*}(\kappa_\iota a_0)$. 

    The desired symbol $a(\hbar; x, \xi)$ is defined as an asymptotic expansion 
    \[
        a(\hbar; x, \xi)\sim \sum_{j=0}^\infty \hbar^j a_j(x, \xi)
    \]
    by Borel's theorem. 
\end{proof}

\begin{proof}[Proof of Theorem \ref{theo_psido_composition}]
    For $u\in C_c^\infty (M; \Omega^{1/2})$ We decompose $\Oph (a)\Oph (b)$ into 
    \begin{equation}\label{eq_ophab}
        \Oph (a)\Oph (b)
        =\sum_{U_\iota\cap U_{\iota^\prime}\neq \varnothing}
        A_\iota(\varphi_{\iota*}(\chi_\iota \chi_{\iota^\prime}))B_{\iota\iota^\prime}, 
    \end{equation}
    where 
    \begin{equation}\label{eq_defi_aii}
        A_\iota:=\chi_\iota \varphi_\iota^* (\tilde \varphi_{\iota*}(\kappa_\iota a))^\mathrm{w}(x, \hbar D)
    \end{equation}
    and 
    \begin{equation}\label{eq_defi_bii}
        B_{\iota\iota^\prime}u:=\varphi_{\iota*}\varphi_{\iota^\prime}^* (\tilde \varphi_{\iota^\prime *}(\kappa_{\iota^\prime}b))^\mathrm{w}(x, \hbar D)(\varphi_{\iota^\prime *}(\chi_{\iota^\prime}u)). 
    \end{equation}

    Take cylindrical functions $\chi_\iota^\prime\in C^\infty (M)$ such that $\rmop{supp}\chi_\iota^\prime\subset U_\iota$ and $\chi_\iota^\prime=1$ near $\rmop{supp}\chi_\iota$. 

    We treat $A_\iota$. In local coordinates,  
    \begin{align*}
        (\varphi_{\iota*}\kappa_\iota)(\tilde \varphi_{\iota*}(\chi_\iota^\prime a))^\mathrm{w}
        &=((\varphi_{\iota*}\kappa_\iota)\# (\tilde\varphi_{\iota*}(\chi_\iota^\prime a)))^\mathrm{w} \\
        &=\left( \tilde\varphi_{\iota*}\left(\kappa_\iota a+\frac{i\hbar}{2}\{ \kappa_\iota, a\}\right)+O_{S^{m_1-2}}(\hbar^2)\right)^\mathrm{w}. 
    \end{align*}
    Thus 
    \begin{equation}\label{eq_aii_decomposition}
        A_\iota
        =\underbrace{\kappa_\iota \varphi_\iota^* (\tilde\varphi_{\iota*}(\chi_\iota^\prime a))^\mathrm{w}}_{=:A_\iota^\prime} 
        -\frac{i\hbar}{2}\underbrace{\chi_\iota \varphi_\iota^* (\tilde\varphi_{\iota*}\{ \kappa_\iota, a\}+\hbar c_\iota^\prime)^\mathrm{w}}_{=:A_\iota^\pprime}. 
    \end{equation}
    Here $c_\iota^\prime (\hbar; x, \xi)\in S^{m_1-2}(T^*\mathbb{R}^n)$ satisfies $\rmop{supp}c_\iota^\prime \subset \rmop{supp}\rmop{supp} \tilde\varphi_{\iota*}(\kappa_\iota a)$ modulo $O(\hbar^\infty)$. 

    We calculate $A_\iota^\prime (\varphi_{\iota*}(\chi_\iota \chi_{\iota^\prime}))B_{\iota\iota^\prime}$ and $A_\iota^\pprime (\varphi_{\iota*}(\chi_\iota \chi_{\iota^\prime}))B_{\iota\iota^\prime}$ respectively. 

    \fstep{Calculation of $\bm{A_\iota^\prime (\varphi_{\iota*}(\chi_\iota \chi_{\iota^\prime}))B_{\iota\iota^\prime}}$}We apply the changing variables for Weyl quantization acting on half densities to $B_{\iota\iota^\prime}$ and obtain 
    \begin{equation}\label{eq_aipb_wip}
        \begin{split}
            (\varphi_{\iota*}(\chi_\iota \chi_{\iota^\prime}))B_{\iota\iota^\prime}
            &=(\varphi_{\iota*}(\chi_\iota \chi_{\iota^\prime}))\varphi_{\iota*}\varphi_{\iota^\prime}^* (\tilde \varphi_{\iota^\prime *}(\chi_\iota^\prime\kappa_{\iota^\prime}b))^\mathrm{w}(x, \hbar D)(\varphi_{\iota^\prime *}(\chi_\iota^\prime\chi_{\iota^\prime}))\varphi_{\iota^\prime *} \\
            &\quad+O_{L^2\to L^2}(\hbar^\infty) \\
            &=(\varphi_{\iota*}(\chi_\iota \chi_{\iota^\prime}))b_{\iota\iota^\prime}^\mathrm{w}(x, \hbar D)(\varphi_{\iota*}(\chi_\iota^\prime \chi_{\iota^\prime}))\varphi_{\iota*}+O_{L^2\to L^2}(\hbar^\infty), 
        \end{split}
    \end{equation}
    where 
    \[
        b_{\iota\iota^\prime}(x, \xi)
        =(\tilde\varphi_{\iota^\prime}\circ \tilde\varphi_\iota^{-1})^*(\tilde\varphi_{\iota^\prime *}( \chi_\iota^\prime\kappa_{\iota^\prime}b))+\hbar^2 q^\prime_{\iota\iota^\prime} 
        =\tilde\varphi_{\iota *}( \chi_\iota^\prime \kappa_{\iota^\prime}b)+\hbar^2 q^\prime_{\iota\iota^\prime}
    \]
    and $q^\prime_{\iota\iota^\prime}(\hbar; x, \xi)\in S^{m_2-2}(T^*\mathbb{R}^n)$ satisfies $\rmop{supp} q^\prime_{ \iota\iota^\prime}\subset \rmop{supp}\tilde\varphi_{\iota*}(\chi_\iota \kappa_{\iota^\prime}b)$ modulo $O(\hbar^\infty)$. 

    Hence by \eqref{eq_aii_decomposition}, we have 
\begin{equation}\label{eq_aii1_bii}
    \begin{split}
    &A_\iota^\prime (\varphi_{\iota*}(\chi_\iota \chi_{\iota^\prime}))B_{\iota\iota^\prime} \\
    &=\kappa_\iota \varphi_\iota^* (\tilde\varphi_{\iota*}(\chi_\iota^\prime a))^\mathrm{w}
    (\varphi_{\iota*}(\chi_\iota \chi_{\iota^\prime}))
    (\tilde\varphi_{\iota *}( \chi_\iota^\prime \kappa_{\iota^\prime}b)+\hbar^2 q^\prime_{\iota\iota^\prime})^\mathrm{w}\varphi_{\iota*}(\chi_\iota^\prime \chi_{\iota^\prime})\varphi_{\iota*} \\
    &\quad+O_{L^2\to L^2}(\hbar^\infty) \\
    &=\kappa_\iota \varphi_\iota^* ((\tilde\varphi_{\iota*}(\chi_\iota^\prime a))\# (\varphi_{\iota*}(\chi_\iota \chi_{\iota^\prime}))\# (\tilde\varphi_{\iota *}( \chi_\iota^\prime \kappa_{\iota^\prime}b)+\hbar^2 q^\prime_{\iota\iota^\prime}))^\mathrm{w}\varphi_{\iota*}(\chi_\iota^\prime \chi_{\iota^\prime})\varphi_{\iota*} \\
    &\quad+O_{L^2\to L^2}(\hbar^\infty) \\
    &=\kappa_\iota \varphi_\iota^* \biggl(\tilde\varphi_{\iota*}\biggl(\chi_\iota \kappa_{\iota^\prime}ab
    +\frac{i\hbar}{2}(\{a, b\} \chi_\iota \kappa_{\iota^\prime}+\{a, \chi_\iota \kappa_{\iota^\prime}\} b+\{\chi_\iota, b\}\kappa_{\iota^\prime}a)\biggr)
    +\hbar^2 \tilde q^\prime_{\iota\iota^\prime}\biggr)^\mathrm{w}
    \\
    &\quad (\varphi_{\iota*}(\chi_\iota^\prime \chi_{\iota^\prime}))\varphi_{\iota*}+O_{L^2\to L^2}(\hbar^\infty) \\
    &=\kappa_\iota \varphi_\iota^* \biggl(\tilde\varphi_{\iota*}\biggl(\chi_\iota \kappa_{\iota^\prime}ab
    +\frac{i\hbar}{2}(\{a, b\} \chi_\iota \kappa_{\iota^\prime}+\{a,  \kappa_{\iota^\prime}\} \chi_\iota b)\biggr)
    +\hbar^2 \tilde q^\prime_{\iota\iota^\prime}\biggr)^\mathrm{w}
    (\varphi_{\iota*}\chi_\iota^\prime )\varphi_{\iota*}
    \\
    &\quad +O_{L^2\to L^2}(\hbar^\infty). 
    \end{split}
\end{equation}
Here $\tilde q^\prime_{\iota\iota^\prime}(\hbar; x, \xi)\in S^{m_2-2}(T^*\mathbb{R}^n)$ satisfies $\rmop{supp} \tilde q^\prime_{ \iota\iota^\prime}\subset \rmop{supp}\tilde\varphi_{\iota*}(\chi_\iota \kappa_{\iota^\prime}ab)$ modulo $O(\hbar^\infty)$. For fixed $\iota\in I$, we sum \eqref{eq_aii1_bii} over $\iota^\prime \in I$ such that $U_\iota\cap U_{\iota^\prime}\neq \varnothing$ and obtain 
    \begin{equation}\label{eq_aii1_bii_result}
        \begin{split}
        &\sum_{\iota^\prime: U_\iota \cap U_{\iota^\prime}\neq \varnothing}A_\iota^\prime (\varphi_{\iota*}(\chi_\iota \chi_{\iota^\prime}))B_{\iota\iota^\prime} \\
        &=\kappa_\iota \varphi_\iota^* \biggl(\tilde\varphi_{\iota*}\biggl(\chi_\iota ab+\frac{i\hbar}{2}\{a, b\} \chi_\iota\biggr)+\hbar^2 \tilde q^\prime_\iota\biggr)^\mathrm{w}(\varphi_{\iota*}\chi_\iota^\prime)\varphi_{\iota*} \\
        &\quad+O_{L^2\to L^2}(\hbar^\infty). 
        \end{split}
    \end{equation}
    Here $\tilde q^\prime_\iota:=\sum_{\iota^\prime: U_\iota \cap U_{\iota^\prime}\neq \varnothing}\tilde q^\prime_{\iota\iota^\prime}$. 

    Since $\tilde q^\prime_\iota=\chi_\iota^2 \tilde q^\prime_\iota+O_{S^0}(\hbar^\infty)$, we can find a symbol $\tilde c^\prime_\iota (\hbar; x, \xi)\in S^{m_1+m_2-2}(T^*\mathbb{R}^n)$ which satisfies
\begin{align*}
    &(\varphi_{\iota*}\kappa_\iota)\left(\tilde\varphi_{\iota*}\left(\chi_\iota ab+\frac{i\hbar}{2}\{a, b\} \chi_\iota\right)+\hbar^2 \tilde q^\prime_\iota \right)^\mathrm{w} \\
    &=(\varphi_{\iota*}\chi_\iota)\left(\tilde\varphi_{\iota*}\left(\kappa_\iota ab+\frac{i\hbar}{2}\{a, b\} \kappa_\iota+\frac{i\hbar}{2}\{\kappa_\iota, ab\} \right)+\hbar^2 c^\prime_\iota\right)^\mathrm{w}(\varphi_{\iota*}\chi_\iota) \\
    &\quad+O_{L^2\to L^2}(\hbar^\infty)
\end{align*}
and $\rmop{supp}c^\prime_{j\iota}\subset \rmop{supp}\tilde\varphi_{\iota*}(\kappa_\iota ab)$ modulo $O(\hbar^\infty)$. Hence \eqref{eq_aii1_bii_result} becomes 
\begin{align*}
    &\sum_{\iota^\prime: U_\iota \cap U_{\iota^\prime}\neq \varnothing}A_\iota^\prime (\varphi_{\iota*}(\chi_\iota \chi_{\iota^\prime}))B_{\iota\iota^\prime} \\
    &=\chi_\iota\varphi_\iota^*\left(\tilde\varphi_{\iota*}\left(\kappa_\iota ab+\frac{i\hbar}{2}\{a, b\} \kappa_\iota+\frac{i\hbar}{2}\{\kappa_\iota, ab\} \right) +\hbar^2 c^\prime_\iota \right)^\mathrm{w}(\varphi_{\iota*}\chi_\iota)\varphi_{\iota*} \\
    &\quad+O_{L^2\to L^2}(\hbar^\infty).  
\end{align*}
Summing up this over $\iota\in I$ and obtain 
\begin{equation}
    \label{eq_aii1_bii_final}
    \begin{split}
        &\sum_{U_\iota \cap U_{\iota^\prime}\neq \varnothing}A_\iota^\prime (\varphi_{\iota*}(\chi_\iota \chi_{\iota^\prime}))B_{\iota\iota^\prime} \\
    &=\Oph \left( ab+\frac{i\hbar}{2}\{a, b\} \right)
    +\sum_{\iota\in I} \Ophloc \left( \frac{i\hbar}{2}\tilde\varphi_{\iota*}\{\kappa_\iota, ab\}+\hbar^2 c^\prime_\iota\right) \\
    &\quad+O_{L^2\to L^2}(\hbar^\infty). 
    \end{split}
\end{equation}
Since the support of $\tilde\varphi_{\iota*}\{\kappa_\iota, ab\}$ and $c^\prime_\iota$ is included in $\rmop{supp}\tilde\varphi_{\iota*}(\kappa_\iota ab)$, we can apply Lemma \ref{lemm_psido_local_global} for \eqref{eq_aii1_bii_final} and find a symbol $c^\prime(\hbar; x, \xi)\in S^{m_1+m_2-2}_\mathrm{cyl}(T^*M)$ which satisfies 
\[
    \sum_{\iota\in I} \Ophloc \left( \frac{i\hbar}{2}\tilde\varphi_{\iota*}\{\kappa_\iota, ab\}+\hbar^2 c^\prime_\iota\right)=\hbar \Oph (c^\prime)+O_{L^2\to L^2}(\hbar^\infty), 
\]
$\rmop{supp}c^\prime_j\subset \rmop{supp}(ab)$ modulo $O(\hbar^\infty)$ and 
\[
    c^\prime_0(x, \xi)=\sum_{\iota\in I} \frac{i}{2}\{\kappa_\iota , ab\}=0. 
\]

Thus \eqref{eq_aii1_bii_final} becomes
\begin{equation}
    \label{eq_aii1_bii_final_2}
    \begin{split}
    &\sum_{U_\iota \cap U_{\iota^\prime}\neq \varnothing}A_\iota^\prime (\varphi_{\iota*}(\chi_\iota \chi_{\iota^\prime}))B_{\iota\iota^\prime} \\
    &=\Oph \left( ab+\frac{i\hbar}{2}\{a, b\} +\hbar^2 (\hbar^{-1}c^\prime)\right)
    +O_{L^2\to L^2}(\hbar^\infty). 
    \end{split}
\end{equation}

\fstep{Calculation of $\bm{A_\iota^\pprime (\varphi_{\iota*}(\chi_\iota \chi_{\iota^\prime}))B_{\iota\iota^\prime}}$}
It is enough to calculate the principal term of $A_{\iota\iota^\prime}^\pprime B_{\iota\iota^\prime}$ in \eqref{eq_aii_decomposition} since $A_{\iota\iota^\prime}^\pprime$ has a coefficient $\hbar$. By changing variables of the Weyl quantization acting on half-densities, we have 
\[
    A_\iota^\pprime=\varphi_{\iota^\prime}^* \left( \tilde\varphi_{\iota^\prime *} (\chi_{\iota^\prime}^\prime\{ \kappa_\iota, a\})+O_{S^{m_1-2}}(\hbar)\right)^\mathrm{w}(\varphi_{\iota^\prime}\circ \varphi_\iota^{-1})_*+O_{L^2\to L^2}(\hbar^\infty). 
\]
Hence
\begin{align*}
    &A_{\iota\iota^\prime}^\pprime (\varphi_{\iota*}(\chi_\iota \chi_{\iota^\prime}))B_{\iota\iota^\prime} \\
    &=\varphi_{\iota^\prime}^* \left( \tilde\varphi_{\iota^\prime *} (\chi_{\iota^\prime}^\prime\{ \kappa_\iota, a\})+O_{S^{m_1-2}}(\hbar)\right)^\mathrm{w}
    (\varphi_{\iota*}(\chi_\iota \chi_{\iota^\prime}))
    \left(\tilde \varphi_{\iota^\prime *}(\kappa_{\iota^\prime}b)\right)^\mathrm{w} (\varphi_{\iota^\prime *}\chi_{\iota^\prime})\varphi_{\iota^\prime*} \\
    &\quad +O_{L^2\to L^2}(\hbar^\infty) \\
    &=\chi_{\iota^\prime}\varphi_{\iota^\prime}^*(\tilde\varphi_{\iota^\prime*} (\kappa_{\iota^\prime}b\{ \kappa_\iota, a\})+\hbar c^\pprime_{\iota\iota^\prime})^\mathrm{w}\varphi_{\iota^\prime}(\chi_{\iota^\prime}u)+O_{L^2\to L^2}(\hbar^\infty). 
\end{align*}
$c^\pprime_{\iota\iota^\prime}(\hbar; x, \xi)\in S^{m_1+m_2-2-j}(T^*\mathbb{R}^n)$ satisfies $\rmop{supp} c^\pprime_{\iota\iota^\prime}\subset \rmop{supp}\tilde\varphi_{\iota*}(\kappa_\iota \kappa_{\iota^\prime} ab)$ modulo $O(\hbar^\infty)$. We sum them up over $\iota\in I$ such that $U_\iota\cap U_{\iota^\prime}\neq \varnothing$. Then the terms including $\{\kappa_\iota, a\}$ vanish and we obtain 
    \begin{equation}\label{eq_aii2_bii_final}
    \sum_{\iota: U_\iota\cap U_{\iota^\prime}\neq \varnothing}A_{\iota\iota^\prime}^\pprime (\varphi_{\iota*}(\chi_\iota \chi_{\iota^\prime}))B_{\iota\iota^\prime} 
    =\hbar\chi_{\iota^\prime}\varphi_{\iota^\prime}^*(c^\pprime_{\iota^\prime})^\mathrm{w}(\varphi_{\iota^\prime*}\chi_{\iota^\prime})\varphi_{\iota^\prime*}+O_{L^2\to L^2}(\hbar^\infty), 
\end{equation}
where $c^\pprime_{\iota^\prime}:=\sum_{\iota: U_\iota\cap U_{\iota^\prime}\neq \varnothing}c^\pprime_{\iota\iota^\prime}$. The sum of \eqref{eq_aii2_bii_final} over $\iota^\prime \in I$ is 
\begin{equation}\label{eq_aii2_bii_final_2}
    \sum_{U_\iota\cap U_{\iota^\prime}\neq \varnothing}A_{\iota\iota^\prime}^\pprime (\varphi_{\iota*}(\chi_\iota \chi_{\iota^\prime}))B_{\iota\iota^\prime} 
    =\hbar\sum_{\iota^\prime \in I}\Ophloc[\iota^\prime] (c^\pprime_{\iota^\prime})+O_{L^2\to L^2}(\hbar^\infty). 
\end{equation}
Since $\rmop{supp}c^\pprime_{\iota^\prime}\subset \rmop{supp} \tilde\varphi_{\iota^\prime *}(\kappa_{\iota^\prime}ab)$, we can apply Lemma \ref{lemm_psido_local_global} and find a symbol $c^\pprime (\hbar; x, \xi)\in S^{m_1+m_2-2}_\mathrm{cyl}(T^*M)$ which satisfies 
\[
    \sum_{\iota^\prime \in I}\Ophloc[\iota^\prime] (c^\pprime_{\iota^\prime})
    =\Oph (c^\pprime)+O_{L^2\to L^2}(\hbar^\infty)
\]
and $\rmop{supp}c^\pprime_j \subset \rmop{supp} (ab)$ modulo $O(\hbar^\infty)$. Hence \eqref{eq_aii2_bii_final_2} becomes 
\begin{equation}
    \label{eq_aii2_bii_final_3}
    \sum_{U_\iota\cap U_{\iota^\prime}\neq \varnothing}A_{\iota\iota^\prime}^\pprime (\varphi_{\iota*}(\chi_\iota \chi_{\iota^\prime}))B_{\iota\iota^\prime} 
    =\hbar \Oph (c^\pprime)+O_{L^2\to L^2}(\hbar^\infty). 
\end{equation}

\fstep{Conclusion}\eqref{eq_aii_decomposition}, \eqref{eq_aii1_bii_final_2} and \eqref{eq_aii2_bii_final_3} imply 
\begin{equation}\label{eq_psido_composition_wip}
    \begin{split}
        &\Oph (a)\Oph (b) \\
        &=\Oph \left(ab+\frac{i\hbar}{2}\{a, b\}+\hbar^2 (\hbar^{-1}c^\prime)-\frac{i\hbar}{2}(\hbar c^\pprime)\right)+O_{L^2\to L^2}(\hbar^\infty) \\
        &=\Oph \left(ab+\frac{i\hbar}{2}\{a, b\}+\hbar^2 c\right)+O_{L^2\to L^2}(\hbar^\infty)
    \end{split}
\end{equation}
where 
\[
    c(\hbar; x, \xi):=\hbar^{-1}c^\prime(\hbar; x, \xi)-\frac{i}{2}c^\pprime(\hbar; x, \xi). 
\]
The symbol $c\in S^{m_1+m_2-2}_\mathrm{cyl}(T^*M)$ has the desired properties. 
\end{proof}

Next we prove the sharp G\aa rding inequality (Theorem \ref{theo_sharp_garding}). We begin with the case of Euclidean spaces. 

\begin{theo}[Sharp G\aa rding inequality on Euclidean spaces]\label{theo_sharp_garding_euclid}
    For all $a\in S^0(\mathbb{R}^n)$ with $\rmop{Re}a\geq 0$, there exists a symbol $b=b(\hbar)\in S^0(\mathbb{R}^n)$ such that the following statements hold: 
    \begin{itemize}
        \item The inequality 
        \begin{equation}\label{eq_sharp_garding_euclid}
            \rmop{Re}a^\mathrm{w}(x, \hbar D)\geq -\hbar \rmop{Re}b^\mathrm{w}(x, \hbar D)
        \end{equation} 
        holds. 
        \item $\rmop{supp}b\subset \rmop{supp}a$ modulo $O(\hbar^\infty)$. 
    \end{itemize}
    Furthermore, if the symbol $a$ also depends on some parameter $\tau\in \Omega$ and are uniformly bounded in $S^0(T^*\mathbb{R}^n)$, then the symbol $b\in S^0(T^*\mathbb{R}^n)$ itself are uniformly bounded with respect to $\tau\in\Omega$. 
\end{theo}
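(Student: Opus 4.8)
The plan is to prove this by Friedrichs symmetrization, i.e.\ via the anti-Wick (Berezin) quantization attached to semiclassical Gaussian coherent states; the positivity will then be automatic, and the only real work is tracking the support of $b$ and the uniformity in the parameter $\tau$.

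First I would reduce to a real, nonnegative symbol. Since the semiclassical Weyl quantization satisfies $(a^\mathrm{w}(x, \hbar D))^* = \bar a^\mathrm{w}(x, \hbar D)$, one has $\rmop{Re} a^\mathrm{w}(x, \hbar D) = (\rmop{Re} a)^\mathrm{w}(x, \hbar D)$, and $\rmop{Re} a$ is a real-valued element of $S^0(\mathbb{R}^n)$ (uniformly bounded in $\tau$ if $a$ is) with $\rmop{Re} a \geq 0$ and $\rmop{supp}(\rmop{Re} a) \subset \rmop{supp} a$. Hence it suffices to produce, for a real symbol $a \geq 0$, a real $b = b(\hbar) \in S^0(\mathbb{R}^n)$ with $a^\mathrm{w}(x, \hbar D) \geq -\hbar\, b^\mathrm{w}(x, \hbar D)$ and $\rmop{supp} b \subset \rmop{supp} a$ modulo $O(\hbar^\infty)$; the same $b$ then works for the original symbol, with $\rmop{Re} b^\mathrm{w} = b^\mathrm{w}$.

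Next I would bring in the anti-Wick operator $\mathrm{Op}^{\mathrm{AW}}_\hbar(a)$, which is a superposition with weights $a(z) \geq 0$ of the rank-one operators $\jbracket{\cdot, \varphi^\hbar_z}\varphi^\hbar_z \geq 0$ ($z \in \mathbb{R}^{2n}$, $\varphi^\hbar_z$ the normalized Gaussian coherent state), so that $\mathrm{Op}^{\mathrm{AW}}_\hbar(a) \geq 0$. The standard facts to invoke (see e.g.\ \cite{Zworski12}, Ch.~5) are: $\mathrm{Op}^{\mathrm{AW}}_\hbar(a) = (a * G_\hbar)^\mathrm{w}(x, \hbar D)$, where $G_\hbar$ is the normalized Wigner function of $\varphi^\hbar_0$, a Gaussian on $\mathbb{R}^{2n}$ of width $O(\sqrt{\hbar})$; and $a * G_\hbar \in S^0(\mathbb{R}^n)$ with seminorms controlled by finitely many seminorms of $a$. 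Then I would Taylor-expand the phase-space convolution, $a * G_\hbar = \sum_{k=0}^N \hbar^k L_k a + O_{S^0}(\hbar^{N+1})$ with $L_0 = \rmop{id}$ and each $L_k$ a constant-coefficient differential operator on $\mathbb{R}^{2n}$ (for the standard Gaussian, a positive multiple of $\Delta^k$, coming from the moments of $G_1$); crucially $\rmop{supp}(L_k a) \subset \rmop{supp} a$ for every $k$. By Borel's theorem I would choose a real $b \in S^0(\mathbb{R}^n)$ with $b \sim \sum_{k \geq 1} \hbar^{k-1} L_k a$ and $\rmop{supp} b \subset \rmop{supp} a$ modulo $O(\hbar^\infty)$, so that $e_\hbar := a - (a * G_\hbar) + \hbar b = O_{S^0}(\hbar^\infty)$. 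Replacing $b$ by $b - \hbar^{-1} e_\hbar$ — which stays real, stays uniformly bounded in $S^0$, and does not change $\rmop{supp} b$ modulo $O(\hbar^\infty)$ since $\hbar^{-1} e_\hbar = O_{S^0}(\hbar^\infty)$ — yields the exact identity $a^\mathrm{w}(x, \hbar D) = \mathrm{Op}^{\mathrm{AW}}_\hbar(a) - \hbar\, b^\mathrm{w}(x, \hbar D)$. Since $\mathrm{Op}^{\mathrm{AW}}_\hbar(a) \geq 0$, this gives $\rmop{Re} a^\mathrm{w}(x, \hbar D) \geq -\hbar\, \rmop{Re} b^\mathrm{w}(x, \hbar D)$; and since every estimate above uses only finitely many $S^0$-seminorms of $a$, the symbol $b$ is uniformly bounded in $S^0$ whenever $a$ is, uniformly in $\tau \in \Omega$.

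The main obstacle is not the positivity, which is built into the anti-Wick construction, but obtaining the sharp support statement $\rmop{supp} b \subset \rmop{supp} a$ modulo $O(\hbar^\infty)$ together with an error-free operator inequality. A plain Friedrichs symmetrization with a compactly supported window only places $\rmop{supp} b$ in an $O(\sqrt{\hbar})$-neighborhood of $\rmop{supp} a$, which is far weaker than modulo $O(\hbar^\infty)$; the point is to use the Gaussian window and to observe that all Taylor corrections of the convolution are differential operators applied to $a$, so they preserve $\rmop{supp} a$ exactly, the spreading being confined to the $O(\hbar^\infty)$ remainder, which is then harmless. The rest — realness, absorbing the remainder, uniformity in $\tau$ — is routine bookkeeping.
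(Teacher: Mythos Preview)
Your proposal is correct and takes essentially the same approach as the paper: anti-Wick (FBI/coherent-state) quantization of $a$ gives a nonnegative operator whose Weyl symbol is the Gaussian convolution $p_a=a*G_\hbar$, and one sets $b=\hbar^{-1}(p_a-a)$; the Taylor expansion of the convolution shows each term is a constant-coefficient differential operator (powers of $\Delta_{x,\xi}$) applied to $a$, hence supported in $\rmop{supp}a$, giving $\rmop{supp}b\subset\rmop{supp}a$ modulo $O(\hbar^\infty)$. The only cosmetic difference is that the paper defines $b$ directly as $\hbar^{-1}(p_a-a)$ and reads off the asymptotic expansion, whereas you first Borel-sum the expansion and then absorb the $O(\hbar^\infty)$ discrepancy; both are equivalent and equally routine.
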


For investigation of the support of $b(\hbar; x, \xi)$ in \eqref{theo_sharp_garding_euclid}, we recall the FBI transform and its fundamental properties. 

\begin{prop}\label{prop_fbi_fundamental}
    We define an FBI transform $Fu$ of $u\in \mathscr{S}(\mathbb{R}^n)$ as 
\[
F u(x, \xi):=\frac{2^{n/4}}{(2\pi \hbar)^{3n/4}}\int_{\mathbb{R}^n}e^{-|x-y|^2/2\hbar+i\xi\cdot(x-y)/\hbar}u(y)\, \diff y. 
\]
Then the following statements hold. 
\begin{enumerate}
    \renewcommand{\labelenumi}{(\roman{enumi})}
    \item $F$ is continuously extended to a linear isometry from $L^2(\mathbb{R}^n)$ to $L^2(\mathbb{R}^{2n})$. 
    \item For $b\in S^0(T^*\mathbb{R}^n)$, we define
    \[
    p_b(x, \xi):=\left(\frac{1}{\pi\hbar}\right)^n\int_{\mathbb{R}^{2n}} e^{-|x-y|^2/\hbar - |\xi-\eta|^2/\hbar}b(y, \eta)\, \diff y\diff \eta. 
    \]
    Then $p_b\in S^0(T^*\mathbb{R}^n)$ and 
    \begin{equation}\label{eq_weyl_antiwick}
    F^* M_b F =p_b^\mathrm{w}(x, \hbar D). 
    \end{equation}
    Here $M_b: u\mapsto b u$ is the multiplication operator by $b$. 
\end{enumerate}
\end{prop}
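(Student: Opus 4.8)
The plan is to treat the two parts separately, reducing everything to Gaussian integrals and Plancherel's theorem.

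For part (i), I would first rewrite $Fu(x,\xi)$ as a rescaled Fourier transform, in the $y$ variable, of the Gaussian-windowed function $g_x(y):=e^{-|x-y|^2/2\hbar}u(y)$: namely $Fu(x,\xi)=\frac{2^{n/4}}{(2\pi\hbar)^{3n/4}}\,e^{i\xi\cdot x/\hbar}\,\widehat{g_x}(\xi/\hbar)$, with $\widehat{\,\cdot\,}$ the non-unitary Fourier transform. Fixing $x$, substituting $\xi\mapsto\hbar\xi$ and applying Plancherel in $\xi$ gives $\int_{\mathbb{R}^n}|Fu(x,\xi)|^2\,\diff\xi=(\pi\hbar)^{-n/2}\|g_x\|_{L^2}^2$; integrating in $x$, using Fubini and $\int_{\mathbb{R}^n}e^{-|x-y|^2/\hbar}\,\diff x=(\pi\hbar)^{n/2}$, the constants cancel and $\|Fu\|_{L^2(\mathbb{R}^{2n})}=\|u\|_{L^2(\mathbb{R}^n)}$. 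Since $\mathscr S(\mathbb{R}^n)$ is dense in $L^2(\mathbb{R}^n)$, $F$ extends to the claimed isometry.

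For part (ii), I would prove the operator identity \eqref{eq_weyl_antiwick} by computing the Schwartz kernel of $F^*M_bF$. Inserting the definitions of $F$ and $F^*$, the inner phase-space integral in the $\eta$ variable is Gaussian; after justifying its convergence (by first truncating $b$, say replacing it by $b(y,\eta)\chi(\varepsilon\eta)$ and letting $\varepsilon\downarrow0$, or by interpreting it as an oscillatory integral), one obtains an integral operator with a Gaussian-type kernel $K_b(x,x')$. Applying the Weyl-symbol inversion formula $p(x,\xi)=\int_{\mathbb{R}^n}K_b\!\left(x+\tfrac z2,\,x-\tfrac z2\right)e^{-i\xi\cdot z/\hbar}\,\diff z$ and carrying out the remaining Gaussian integrations reproduces exactly the convolution defining $p_b$. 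An alternative is to verify \eqref{eq_weyl_antiwick} on the dense family of exponential symbols $b_{a,\alpha}(y,\eta)=e^{i(a\cdot y+\alpha\cdot\eta)/\hbar}$ — for which conjugation of the corresponding modulation–translation operator by $F$ is explicit on the left, and the Gaussian average is explicit on the right — and then recover general $b$ from its (inverse) Fourier representation.

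To see $p_b\in S^0(T^*\mathbb{R}^n)$, write $p_b=b\ast G_\hbar$ with $G_\hbar(z,\zeta)=(\pi\hbar)^{-n}e^{-(|z|^2+|\zeta|^2)/\hbar}$ an $L^1$-normalized Gaussian of width $\sqrt\hbar\le1$; differentiating under the integral sign moves the derivatives onto $b$, so $\partial_x^\alpha\partial_\xi^\beta p_b=(\partial_x^\alpha\partial_\xi^\beta b)\ast G_\hbar$, and the symbol bound $|\partial_x^\alpha\partial_\xi^\beta b(y,\eta)|\le C_{\alpha\beta}\jbracket{\eta}^{-|\beta|}$ together with Peetre's inequality $\jbracket{\eta}^{-|\beta|}\le2^{|\beta|/2}\jbracket{\xi}^{-|\beta|}\jbracket{\xi-\eta}^{|\beta|}$ and the rapid decay of $G_\hbar$ gives $|\partial_x^\alpha\partial_\xi^\beta p_b(x,\xi)|\le C'_{\alpha\beta}\jbracket{\xi}^{-|\beta|}$, with constants uniform in $\hbar\in(0,1]$ (and in an extra parameter $\tau$ when $b$ depends on one with uniform seminorms).

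The step I expect to be the main obstacle is the operator identity in (ii) for a general $b\in S^0$: since such $b$ is only bounded, neither the Gaussian integrals in the kernel computation nor the interchange of integrations in the kernel-to-symbol passage are absolutely convergent, so they must be legitimized — by truncation and dominated convergence, by an oscillatory-integral interpretation, or by arguing at the level of the sesquilinear form $\jbracket{F^*M_bFu,v}$ with $u,v\in\mathscr S(\mathbb{R}^n)$. Once the regularization is set up, what remains is routine bookkeeping of Gaussian integrals.
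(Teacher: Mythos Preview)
Your proposal is correct and matches the paper's approach: the paper defers (i) and the operator identity \eqref{eq_weyl_antiwick} to references (Martinez, Zworski) as ``direct calculation,'' and only writes out the $p_b\in S^0$ estimate, which it does exactly as you do---moving derivatives onto $b$ via $\partial_x^\alpha\partial_\xi^\beta p_b=p_{\partial_x^\alpha\partial_\xi^\beta b}$ and then bounding the Gaussian average of $\jbracket{\eta}^{-|\beta|}$ by $C\jbracket{\xi}^{-|\beta|}$. Your explicit invocation of Peetre's inequality for this last step is in fact more detailed than the paper's.
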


\begin{rema*}
    $F^*M_bF$ is so-called anti-Wick quantization of the symbol $b$. 
\end{rema*}

\begin{proof}
    A direct calculation shows (i) and the relation \eqref{eq_weyl_antiwick} (see \cite{Martinez02} or Chapter 13 in \cite{Zworski12} for details). We have to prove $p_b\in S^0(T^*\mathbb{R}^n)$ if $b\in S^0(T^*\mathbb{R}^n)$. The facts $\partial_x e^{-|x-y|^2/\hbar}=-\partial_y e^{-|x-y|^2/\hbar}$, $\partial_\xi e^{-|\xi-\eta|^2/\hbar}=-\partial_\eta e^{-|\xi-\eta|^2/\hbar}$ and integration by parts show that 
    \[
        \partial_x^\alpha\partial_\xi^\beta p_b(x, \xi)=p_{\partial_x^\alpha\partial_\xi^\beta b}(x, \xi). 
    \]
    Thus the estimate 
    \begin{align*}
        |\partial_x^\alpha\partial_\xi^\beta p_b(x, \xi)|
        &\leq \frac{|b|_{0, \alpha, \beta}}{(\pi\hbar)^n} \int_{\mathbb{R}^{2n}} e^{-|x-y|^2/\hbar - |\xi-\eta|^2/\hbar}\jbracket{\eta}^{-|\beta|}\, \diff y\diff \eta \\
        &\leq C|b|_{0, \alpha, \beta}\jbracket{\xi}^{-|\beta|}, 
    \end{align*}
    where 
    \[
        |b|_{0, \alpha, \beta}:=\sup_{(x, \xi)\in T^*\mathbb{R}^n} \jbracket{\xi}^{|\beta|}|\partial_x^\alpha \partial_\xi^\beta b(x, \xi)|.   
    \]
    This shows that $p_b\in S^0(T^*\mathbb{R}^n)$. 
\end{proof}

\begin{proof}[Proof of Theorem \ref{theo_sharp_garding_euclid}]
    We define a symbol $b(\hbar; x, \xi)\in S^0(T^*\mathbb{R}^n)$ as 
    \[
    b(\hbar; x, \xi)=\hbar^{-1}(p_a(\hbar; x, \xi)-a(x, \xi)). 
    \]
    Then by $\rmop{Re}a\geq 0$, we have
    \begin{align*}
        &\rmop{Re}\jbracket{a^\mathrm{w}(x, \hbar D)u, u}_{L^2(\mathbb{R}^n)} \\
        &=\jbracket{M_{\rmop{Re}a} F u, F u}_{L^2(\mathbb{C}^n)}-\hbar \rmop{Re}\jbracket{b^\mathrm{w}(\hbar; x, \hbar D) u, u}_{L^2(\mathbb{R}^n)} \\
    &\geq -\hbar \rmop{Re}\jbracket{b^\mathrm{w}(\hbar; x, \hbar D) u, u}_{L^2(\mathbb{R}^n)}. 
\end{align*}

    By a calculation by the Taylor theorem, we obtain
    \begin{align*}
        p_a(x, \xi)&=\sum_{j=0}^N\frac{1}{j!}\left(\frac{\hbar}{4}\right)^j \triangle_{x, \xi}^j a(x, \xi)+\hbar^{N+1}q_{N+1}(\hbar; x, \xi), \\
    q_{N+1}(\hbar; x, \xi)&:=\sum_{|\alpha|+|\beta|=2N+2}\pi^{-n}\int_{\mathbb{R}^{2n}} \diff y\diff \eta \, e^{-|y|^2-|\eta|^2} y^\alpha \eta^\beta  \\
    &\quad\times \int_0^1 \diff \tau \, \partial_x^\alpha \partial_\xi^\beta a(x+\hbar^{1/2}\tau y, \xi+\hbar^{1/2}\tau \eta)\end{align*}
    Thus
    \[
         b(\hbar; x, \xi)=\sum_{j=0}^{N-1}\frac{1}{(j+1)!}\frac{\hbar^j}{4^{j+1}} \triangle_{x, \xi}^{j+1} a(x, \xi)+\hbar^N q_{N+1}(\hbar; x, \xi). \qedhere
    \]
    This implies $\rmop{supp}b\subset \rmop{supp}a$ modulo $O(\hbar^\infty)$. 
\end{proof}

\begin{proof}[Proof of Theorem \ref{theo_sharp_garding}]
    Let $u\in C_c^\infty (M; \Omega^{1/2})$. Since
    \begin{equation}\label{eq_expectation_decomposition}
        \rmop{Re}\jbracket{ \Oph (a)u, u}_{L^2}=\sum_{\iota\in I}\rmop{Re}\jbracket{ (\tilde\varphi_{\iota*}(\kappa_\iota a))^\mathrm{w}(x, \hbar D)(\varphi_{\iota*}(\chi_\iota u)), \varphi_{\iota*}(\chi_\iota u)}_{L^2}, 
    \end{equation}
    it is enough to investigate $(\tilde\varphi_{\iota*}(\kappa_\iota a))^\mathrm{w}(x, \hbar D)$ for each $\iota\in I$. 
    By Theorem \ref{theo_sharp_garding_euclid}, there exists $b_\iota=b_\iota(\hbar)\in S^0(T^*\mathbb{R}^n)$ such that 
    \begin{align*}
        &\rmop{Re}\jbracket{ (\tilde\varphi_{\iota*}(\kappa_\iota a))^\mathrm{w}(x, \hbar D)\varphi_{\iota*}(\chi_\iota u), \varphi_{\iota*}(\chi_\iota u)}_{L^2} \\
        &\geq -\hbar \jbracket{b_\iota^\mathrm{w}(x, \hbar D)\varphi_{\iota*}(\chi_\iota u), \varphi_{\iota*}(\chi_\iota u)}_{L^2}
    \end{align*}
        and $\rmop{supp}b_\iota \subset (\tilde\varphi_{\iota*}(\kappa_\iota a))$ modulo $O(\hbar^\infty)$. 
    \[
        b_\iota (\hbar; x, \xi)\sim \sum_{j=0}^\infty\frac{1}{(j+1)!}\frac{\hbar^j}{4^{j+1}} \triangle_{x, \xi}^{j+1} (\tilde\varphi_{\iota*}(\kappa_\iota a))(x, \xi) \quad \text{in } S^0(T^*\mathbb{R}^n). 
    \]
        Thus by \eqref{eq_expectation_decomposition}, we obtain 
        \begin{align*}
            \rmop{Re}\jbracket{ \Oph (a)u, u}_{L^2}
            &\geq -\hbar \sum_{\iota\in I}\rmop{Re}\jbracket{ b_\iota^\mathrm{w}(x, \hbar D)(\varphi_{\iota*}(\chi_\iota u)), \varphi_{\iota*}(\chi_\iota u)}_{L^2} \\
            &=-\hbar \jbracket{\sum_{\iota\in I} \Ophloc (b_\iota) u, u}_{L^2}. 
        \end{align*}
        Since $\rmop{supp}b_\iota \subset \rmop{supp}(\tilde\varphi_{\iota*}(\kappa_\iota a))$ modulo $O(\hbar^\infty)$, we can apply Lemma \ref{lemm_psido_local_global} and obtain a symbol $b(\hbar; x, \xi)\in S^0_\mathrm{cyl}(T^*M)$ which satisfies
        \[
            \sum_{\iota\in I} \Ophloc (b_\iota)=\Oph (b)+O_{L^2\to L^2}(\hbar^\infty)
        \]
        and $\rmop{supp}b\subset \rmop{supp}a$ modulo $O(\hbar^\infty)$. 
\end{proof}

\subsection{Non-canonical quantization and (radially homogeneous) wavefront sets}\label{subs_quantization_wf_hwf}

In this section we prove Theorem \ref{theo_hwf_quantization} and Proposition \ref{prop_wf_quantization}. As a preparation, we prove a lemma on the relation between a quantization of locally defined symbols and the quantization procedure $\Oph$. 

\begin{lemm}\label{lemm_local_quantization}
    Let $\varphi: U\to V$ be polar coordinates on $M$ and $\chi\in C^\infty (M)$ be a cylindrical function supported in $U$. Then, for a symbol $b\in S^m(T^*\mathbb{R}^n)$, there exists $a(\hbar; x, \xi)\in S^m_\mathrm{cyl}(T^*M)$ which satisfies 
        \[
            \chi\varphi^* b^\mathrm{w}(x, \hbar D)(\varphi_* \chi)\varphi_*
            =\Oph (a)+O_{L^2\to L^2}(\hbar^\infty)
        \]
        and has an asymptotic expansion 
        \[
            a(\hbar; x, \xi)\sim \sum_{j=0}^\infty \hbar^j a_j(x, \xi), \quad a_j\in S^{m-j}_\mathrm{cyl}(T^*M)
        \]
        with $\rmop{supp}a_j \subset \rmop{supp}(\chi^2\tilde\varphi^* b)$ and $a_0(x, \xi)=\chi(x)^2\tilde\varphi^*b(x, \xi)$. 
\end{lemm}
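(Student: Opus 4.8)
The plan is to imitate the proof of Lemma~\ref{lemm_psido_local_global}, the only genuinely new ingredient being the change of variables relating the polar coordinates $\varphi$ to the fixed atlas $\{\varphi_\iota\}_{\iota\in I}$. I will rewrite $\chi\varphi^*b^\mathrm{w}(x,\hbar D)(\varphi_*\chi)\varphi_*$, modulo $O_{L^2\to L^2}(\hbar^\infty)$, as a finite sum $\sum_{\iota\in I}\Ophloc(d_\iota)$ with $d_\iota\in S^m(T^*\mathbb{R}^n)$ supported, modulo $O(\hbar^\infty)$, in $\pi^{-1}(\rmop{supp}\varphi_{\iota*}(\kappa_\iota\chi^2))$, and then invoke Lemma~\ref{lemm_psido_local_global}. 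The leading symbol will come out to be $a_0(x,\xi):=\chi(x)^2\tilde\varphi^*b(x,\xi)$, extended by $0$ off $T^*U$; this extension is smooth because $\rmop{supp}\chi$ is a closed subset of $U$ (and in fact stays in a region $r\geq\delta$, since $M\setminus E$ is compact), and since $\chi$ is cylindrical and, in polar coordinates, the canonical lift $\tilde\varphi$ acts only on the $S$-components and trivially in $r$, the bounds defining $S^m_\mathrm{cyl}(T^*M)$ (uniform as $r\to\infty$) hold for $a_0$; thus $a_0\in S^m_\mathrm{cyl}(T^*M)$.

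For the reduction I would insert the partition of unity on both sides: using $\chi=\sum_\iota\kappa_\iota\chi$ (a finite sum, as $\rmop{supp}\chi$ meets only finitely many $U_\iota$),
\[
 \chi\varphi^*b^\mathrm{w}(x,\hbar D)(\varphi_*\chi)\varphi_*=\sum_{\iota,\iota^\prime\in I}(\kappa_\iota\chi)\,\varphi^*b^\mathrm{w}(x,\hbar D)(\varphi_*(\kappa_{\iota^\prime}\chi))\,\varphi_*.
\]
By the pseudolocality of the Weyl calculus (with estimates uniform in $r$, thanks to the cylindrical structure), the $(\iota,\iota^\prime)$ terms with $U_\iota\cap U_{\iota^\prime}\cap\rmop{supp}\chi=\varnothing$ are $O_{L^2\to L^2}(\hbar^\infty)$; for the surviving finitely many pairs I would first fold the two cutoffs into $b^\mathrm{w}(x,\hbar D)$ by the composition formula for $S^m$ symbols, getting a symbol in $S^m(T^*\mathbb{R}^n)$ supported, modulo $O(\hbar^\infty)$, in $\rmop{supp}\varphi_*(\kappa_\iota\kappa_{\iota^\prime}\chi^2)\subset\varphi(U\cap U_\iota)$ with leading part $\varphi_*(\kappa_\iota\kappa_{\iota^\prime}\chi^2)\cdot b$, and then, as in the ``Changing variables'' discussion in Section~\ref{subs_cylindrical_class} applied to the transition diffeomorphism $\varphi_\iota\circ\varphi^{-1}$ (extended to a global diffeomorphism of $\mathbb{R}^n$ away from the relevant supports, which is harmless since the operators are insensitive to the extension modulo $O(\hbar^\infty)$), conjugate into $\varphi_\iota$-coordinates at the cost of an $O(\hbar^2)$ symbol correction (the half-density normalization killing the $O(\hbar)$ term). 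This rewrites the $(\iota,\iota^\prime)$ term, modulo $O(\hbar^\infty)$, as $\Ophloc(d_{\iota\iota^\prime})$ with $d_{\iota\iota^\prime}\in S^m(T^*\mathbb{R}^n)$ having an asymptotic expansion whose leading term is $\tilde\varphi_{\iota*}(\kappa_\iota\kappa_{\iota^\prime}\chi^2\,\tilde\varphi^*b)$ and all of whose terms are supported in $\rmop{supp}\tilde\varphi_{\iota*}(\kappa_\iota\kappa_{\iota^\prime}\chi^2\,\tilde\varphi^*b)$. Summing over $\iota^\prime$ and setting $d_\iota:=\sum_{\iota^\prime}d_{\iota\iota^\prime}$ gives leading term $\tilde\varphi_{\iota*}(\kappa_\iota\chi^2\,\tilde\varphi^*b)$ (using $\sum_{\iota^\prime}\kappa_{\iota^\prime}=1$) and support in $\pi^{-1}(\rmop{supp}\varphi_{\iota*}(\kappa_\iota\chi^2))$ modulo $O(\hbar^\infty)$.

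Finally, Lemma~\ref{lemm_psido_local_global} (applied exactly as in the proof of Theorem~\ref{theo_psido_composition}) produces $a(\hbar;x,\xi)\in S^m_\mathrm{cyl}(T^*M)$ with the asserted asymptotic expansion $a\sim\sum_j\hbar^j a_j$, $a_j\in S^{m-j}_\mathrm{cyl}(T^*M)$, with $\chi\varphi^*b^\mathrm{w}(x,\hbar D)(\varphi_*\chi)\varphi_*=\Oph(a)+O_{L^2\to L^2}(\hbar^\infty)$, and with $\rmop{supp}a_j\subset\rmop{supp}a_0$, where $a_0=\sum_\iota\tilde\varphi_\iota^*(\text{leading term of }d_\iota)=\sum_\iota\kappa_\iota\chi^2\,\tilde\varphi^*b=\chi^2\,\tilde\varphi^*b$. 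This is precisely the claimed leading symbol, and its support is $\rmop{supp}(\chi^2\tilde\varphi^*b)$, as required. I expect the only real work to be the bookkeeping already familiar from the proofs of Lemma~\ref{lemm_psido_local_global} and Theorem~\ref{theo_psido_composition}: threading the auxiliary cylindrical cutoffs through the composition and change-of-variables formulas while checking that every remainder stays supported in $\rmop{supp}(\chi^2\tilde\varphi^*b)$ modulo $O(\hbar^\infty)$. The one conceptual point, and the reason the argument stays inside $S^m_\mathrm{cyl}$, is that for $\iota\in I_\infty$ the transition maps between polar coordinates and $\varphi_\iota$ are independent of $r$, so pseudolocality and the change-of-variables estimates are automatically uniform as $r\to\infty$; I would state this explicitly when invoking them.
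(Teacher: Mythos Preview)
Your proposal is correct and follows essentially the same approach as the paper: reduce to a sum of $\Ophloc$ operators via the partition of unity and the change-of-variables formula for half-density Weyl quantization, then invoke Lemma~\ref{lemm_psido_local_global}. The only organizational difference is that the paper first absorbs both cutoffs $\chi$ into the symbol (writing $\chi\varphi^*b^\mathrm{w}(\varphi_*\chi)\varphi_*=\varphi^*b_\chi^\mathrm{w}\varphi_*$ with $b_\chi\sim(\varphi_*\chi)^2 b+\cdots$) and then inserts a \emph{single} partition of unity on the symbol side, $b_\chi=\sum_\iota(\tilde\varphi_*\kappa_\iota)b_\chi$, before changing variables---avoiding your double sum over $(\iota,\iota^\prime)$ and the pseudolocality step, but arriving at the same input for Lemma~\ref{lemm_psido_local_global} with the same leading symbol $a_0=\chi^2\tilde\varphi^*b$.
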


\begin{proof}
    We calculate the composition 
    \begin{align*}
        \chi\varphi^* b^\mathrm{w}(x, \hbar D)(\varphi_* \chi)\varphi_*
        = \varphi^* b_\chi^\mathrm{w}(x, \hbar D)\varphi_*. 
    \end{align*}
    Here $b_\chi (\hbar; x, \xi)\in S^m(T^*\mathbb{R}^n)$ has an asymptotic expansion
    \[
        b_\chi (\hbar; x, \xi)\sim \sum_{j=0}^\infty \hbar^j b_{\chi, j}(x, \xi), \quad b_{\chi, j}\in S^{m-j}(T^*\mathbb{R}^n)
    \]
    with $\rmop{supp} b_{\chi, j}\subset (b(\varphi_*\chi))$ and $b_0(x, \xi)=\chi(x)^2b(x, \xi)$. 
    We decompose $\varphi^* b_\chi^\mathrm{w}\varphi_*$ into 
    \begin{equation}\label{eq_psido_local_decomposition}
        \varphi^* b_\chi^\mathrm{w}(\hbar; x, \hbar D)\varphi_*=\sum_{\iota: U_\iota \cap U\neq \varnothing} \varphi^* ((\tilde\varphi_*\kappa_\iota)b_\chi)^\mathrm{w}(\hbar; x, \hbar D)\varphi_*. 
    \end{equation}
    By the changing variables of pseudodifferential operators and $b_{\chi, 0}=\chi^2 \tilde\varphi^* b$, we have
    \begin{equation}
        \label{eq_psido_local_changing}
        ((\tilde\varphi_*\kappa_\iota)b_\chi)^\mathrm{w}(\hbar; x, \hbar D)
        =\varphi_*\varphi_\iota^* (\tilde \varphi_{\iota*}(\kappa_\iota\chi^2 \tilde\varphi^*b)+\hbar c_\iota )^\mathrm{w}(\hbar; x, \hbar D) \varphi_{\iota*}\varphi^*, 
    \end{equation}
    where $c_\iota (\hbar; x, \xi)\in S^{m-1}(T^*\mathbb{R}^n)$ and has an asymptotic expansion
    \[
            c_\iota (\hbar; x, \xi)\sim \sum_{j=0}^\infty \hbar^j c_{j, \iota}(x, \xi), \quad c_{j, \iota}\in S^{m-1-j}(T^*\mathbb{R}^n)
        \]
        with $\rmop{supp}c_{j, \iota} \subset \rmop{supp}\tilde\varphi_{\iota*} (\kappa_\iota \chi^2\tilde\varphi^*b)$. 
    Substituting \eqref{eq_psido_local_changing} to \eqref{eq_psido_local_decomposition}, we obtain 
    \begin{equation}\label{eq_psido_local_iota}
    \begin{split}
        &\varphi^* b_\chi^\mathrm{w}(\hbar; x, \hbar D)\varphi_* \\
        &=\sum_{\iota: U_\iota \cap U\neq \varnothing} \varphi_\iota^* (\tilde \varphi_{\iota*}(\kappa_\iota \varphi^*b_\chi)+\hbar c_\iota )^\mathrm{w}(\hbar; x, \hbar D) \varphi_{\iota*}\\
        &=\sum_{\iota\in I} \Ophloc (\tilde \varphi_{\iota*}(\kappa_\iota \varphi^*b_\chi)+\hbar c_\iota )+O_{L^2\to L^2}(\hbar^\infty). \\
    \end{split}
\end{equation}
    Since the support of $\tilde \varphi_{\iota*}(\kappa_\iota \varphi^*b_\chi)+\hbar c_\iota$ is included in $\rmop{supp}\varphi_{\iota*}\kappa_\iota$, we can apply Lemma \ref{lemm_psido_local_global} for \eqref{eq_psido_local_iota} and obtain a symbol $a(\hbar; x, \xi)\in S^m_\mathrm{cyl}(T^*M)$ which satisfies 
    \[
        \sum_{\iota\in I} \Ophloc (\tilde \varphi_{\iota*}(\kappa_\iota \varphi^*b)+\hbar c_\iota )
        =\Oph (a)+O_{L^2\to L^2}(\hbar^\infty)
    \]
    and has an asymptotic expansion 
    \[
        a(\hbar; x, \xi)\sim \sum_{j=0}^\infty \hbar^j a_j(x, \xi), \quad a_j\in S^{m-j}_\mathrm{cyl}(T^*M)
    \]
    with $\rmop{supp}a_j \subset \rmop{supp}(\chi^2\tilde\varphi^* b)$ and $a_0(x, \xi)=\chi(x)^2\varphi^*b(x, \xi)$. 
\end{proof}

\begin{proof}[Proof of Theorem \ref{theo_hwf_quantization}]
    Take a cylindrical function $\chi^\prime\in C^\infty (M)$ such that $\rmop{supp}\chi^\prime \subset U$ and $\chi^\prime=1$ near $\rmop{supp}\chi$. By Lemma \ref{lemm_local_quantization}, there exists a symbol $c(\hbar; x, \xi)\in S^0_\mathrm{cyl}(T^*M)$ which satisfies
\begin{equation}\label{eq_hwf_time_difference}
    \chi\varphi^*(\varphi_*\chi^\prime-a)^\mathrm{w}(\hbar r, \theta, \hbar D_r, \hbar D_\theta)\varphi_*(\chi u)=\Oph (c)u
\end{equation}
and has an asymptotic expansion 
\[
    c(\hbar; x, \xi)\sim \sum_{j=0}^\infty \hbar^j c_j(\hbar; x, \xi), \quad c_j(\hbar; x, \xi)\in S^{-j}_\mathrm{cyl}(T^*M)
\]
with $\rmop{supp}c_j(\hbar) \subset \rmop{supp}\chi^2(1-\tilde\varphi_\iota^*a (\hbar r, \theta, \rho, \eta))$. We compose $A_\hbar (t_0)$ to the left hand side of \eqref{eq_hwf_time_difference} and obtain 
\begin{equation}\label{eq_hwf_time_difference_2}
    A_\hbar (t_0)(\chi^2 u)-A_\hbar (t_0)\chi\varphi^*a^\mathrm{w}(\hbar r, \theta, \hbar D_r, \hbar D_\theta)\varphi_*(\chi u)=A_\hbar (t_0)\Oph (c)u
\end{equation}
    Taking $\delta_0, \delta_1, \ldots$ sufficiently small, we can assume that 
    \[
        \rmop{supp} a(\hbar^{-1}t_0)\cap \rmop{supp}(\chi^\prime-\tilde\varphi^*a(\hbar r, \theta, \rho, \eta))=\varnothing
    \]
    and 
    \[
        \rmop{supp}a(\hbar^{-1}t_0)\cap \rmop{supp}(1-\chi^2)=\varnothing. 
    \]
    Then, by \eqref{eq_hwf_time_difference_2}, we obtain 
    \begin{align*}
        &A_\hbar (t_0)-A_\hbar (t_0)\chi\varphi^* a^\mathrm{w}(x, \hbar D)(\varphi_* \chi)\varphi_* \\
        &=A_\hbar (t_0)\Oph (c)+A_\hbar (t_0)(1-\chi^2)=O_{L^2\to L^2}(\hbar^\infty). \qedhere
    \end{align*}
\end{proof}

\begin{proof}[Proof of Proposition \ref{prop_wf_quantization}]
    Let $a\in C_c^\infty (T^*M)$ be a symbol such that $a=1$ near $(x_0, \xi_0)$ and $\Oph (a)u=O_{L^2}(\hbar^\infty)$. Take a coordinate function $\varphi: U\to V$ near $x_0$. We take a cutoff function $\chi\in C_c^\infty (U)$ and a symbol $b\in C_c^\infty (T^*\mathbb{R}^n)$ such that $\chi=1$ near $x_0$, $b=1$ near $\tilde\varphi (x_0, \xi_0)$ and $a=1$ near $\rmop{supp}\tilde\varphi^*b$. By Lemma \ref{lemm_psido_local_global}, there exists a symbol $c(\hbar; x, \xi)\in S^0_\mathrm{cyl}(T^*M)$ such that 
    \[
        \chi \varphi^* b^\mathrm{w}(x, \hbar D)(\varphi_*\chi)\varphi_*=\Oph (c)+O_{L^2\to L^2}(\hbar^\infty)
    \]
    and $\rmop{supp}c\subset \rmop{supp}\chi^2\tilde\varphi^*b$ modulo $O(\hbar^\infty)$. Since $\rmop{supp} \chi^2\tilde\varphi^*b \cap \rmop{supp}(1-a)=\varnothing$, Theorem \ref{theo_psido_composition} shows that $\Oph (c)\Oph (1-a)=O_{L^2\to L^2}(\hbar^\infty)$. Thus 
    \begin{align*}
        \chi \varphi^* b^\mathrm{w}(x, \hbar D)\varphi_*(\chi u)
        &=\Oph (c)u+O_{L^2}(\hbar^\infty) \\
        &=\Oph (c)\underbrace{\Oph (a)u}_{=O_{L^2}(\hbar^\infty)}+\underbrace{\Oph (c)\Oph (1-a)}_{=O_{L^2\to L^2}(\hbar^\infty)}u+O_{L^2}(\hbar^\infty) \\
        &=O_{L^2}(\hbar^\infty). 
    \end{align*}
    This shows that $(x_0, \xi_0)\not\in \rmop{WF}(u)$. 
\end{proof}

\subsection{Radially homogeneous wavefront sets and homogeneous wavefront sets}\label{subs_cylindrical_homogeneous}

In this section, we prove Proposition \ref{prop_hwf_polar} and Corollary \ref{coro_cylindrical_homogeneous}. 

\begin{proof}[Proof of Proposition \ref{prop_hwf_polar}]
    \fstep{(i) $\bm{\Rightarrow}$ (ii)}Assume that $(x_0, \xi_0)\not\in \rmop{HWF}(u)$. By definition of homogeneous wavefront sets, there exists a symbol $a\in C_c^\infty (T^*\mathbb{R}^n)$ such that $a=1$ near $(x_0, \xi_0)$ and $\| a^\mathrm{w}(\hbar x, \hbar D)u\|_{L^2}=O(\hbar^\infty)$. We can assume that $\rmop{supp}a \subset \Gamma \times \mathbb{R}^n$ for small conic neighborhood $\Gamma$ of $x_0$. Let $\varphi: \Gamma \to \mathbb{R}_+\times V^\prime$ be polar coordinates. Take a cylindrical function $\chi\in C^\infty (\mathbb{R}^n)$ such that $\rmop{supp}\chi \subset \Gamma$ and $\chi=1$ near $\rmop{supp}a$. Then, by the changing variables of pseudodifferential operators (see Section \ref{subs_cylindrical_class}), we have 
    \begin{equation}\label{eq_hwf_polar_change}
        (\chi-a)^\mathrm{w}(\hbar x, \hbar D)=\varphi^* b^\mathrm{w}(\hbar; r, \theta, \hbar D_r, \hbar D_\theta)\varphi_*, 
    \end{equation}
    where $b(\hbar; r, \theta, \rho, \eta)\in C_c^\infty (T^*\mathbb{R}^n)$ satisfies 
    \begin{align}
        \rmop{supp}b(\hbar)
        &\subset \{ \tilde\varphi (x, \xi)\in T^*\mathbb{R}^n \mid (\hbar x, \xi)\in \rmop{supp}(\chi-a)\} \nonumber\\
        &=\{ (r, \theta, \rho, \eta)\in T^*(\mathbb{R}_+\times V^\prime) \mid (\hbar r, \theta, \rho, \hbar \eta)\in \rmop{supp}\tilde\varphi_*(\chi-a)\} \label{eq_hwf_polar_wip}
    \end{align}
    modulo $O(\hbar^\infty)$. Here we employed the explicit form of $\tilde\varphi^{-1}$: 
    \begin{equation}\label{eq_lift_polar_explicit}
        \tilde\varphi^{-1}(r, \theta, \rho, \eta)=\left(r\omega (\theta), \rho \omega (\theta)+\frac{1}{r}\sum_{j, k=1}^{n-1}h^{jk}(\theta) \eta_j \frac{\partial \omega}{\partial \theta_k}(\theta)\right), 
    \end{equation}
    where $\varphi^{-1}(r, \theta)=r\omega(\theta)$, $\omega: V^\prime \to S^{n-1}$ is an embedding into the $(n-1)$-dimensional sphere $S^{n-1}$ and $(h^{jk}(\theta))_{j, k=1}^{n-1}$ is the inverse matrix of the positive definite symmetric matrix $(\partial_{\theta_j}\omega (\theta)\cdot \partial_{\theta_k}\omega(\theta))_{j, k=1}^{n-1}$ (equal to the metric tensor on the sphere). 
    
    We set $\tilde\varphi (x_0, \xi_0)=(r_0, \theta_0, \rho_0, \eta_0)$. Since $\chi-a=0$ near $(x_0, (\xi_0\cdot\hat x_0)\hat x_0)$, we can take a symbol $c(r, \theta, \rho, \eta)\in C_c^\infty (T^*(\mathbb{R}_+\times V^\prime))$ such that $\tilde\varphi_*(\chi-a)=0$ near the set 
    \[
        \{ (r, \theta, \rho, \eta) \mid (r, \theta, \rho, \eta+\eta_0)\in \rmop{supp}c \}. 
    \]
    Then 
    \[
    \rmop{supp} c(\hbar r, \theta, \rho, \hbar \eta)\cap \rmop{supp}\tilde\varphi_*(\chi-a)(\hbar r, \theta, \rho, \hbar \eta)=\varnothing. 
    \]
    Thus \eqref{eq_hwf_polar_change} implies
    \begin{equation}\label{eq_hwf_polar_change_2}
        c^\mathrm{w}(\hbar r, \theta, \hbar D_r, \hbar^2 D_\theta)b^\mathrm{w}(\hbar; r, \theta, \hbar D_r, \hbar D_\theta)=O_{L^2\to L^2}(\hbar^\infty). 
\end{equation}
Since 
\[
    \rmop{supp}c (\hbar r, \theta, \rho, \hbar\eta)\cap \rmop{supp}((\varphi_*\chi)(\hbar r, \theta, \rho, \eta)-\varphi_*\chi)=\varnothing,     
\]
\eqref{eq_hwf_polar_change_2} becomes 
\[
    c^\mathrm{w}(\hbar r, \theta, \hbar D_r, \hbar^2 D_\theta)(\varphi_*\chi)\varphi_*-\varphi_*a^\mathrm{w}(\hbar x, \hbar D)=O_{L^2\to L^2}(\hbar^\infty). 
\]
Hence, since $a^\mathrm{w}(\hbar x, \hbar D)u=O_{L^2}(\hbar^\infty)$, we have 
\[
    c^\mathrm{w}(\hbar r, \theta, \hbar D_r, \hbar^2 D_\theta)(\varphi_*(\chi u))=O_{L^2}(\hbar^\infty). 
\]

\fstep{(ii) $\bm{\Rightarrow}$ (i)}We take polar coordinates $\varphi: U\to V$, cylindrical function $\chi\in C^\infty (\mathbb{R}^n)$ and $a\in C_c^\infty (T^*\mathbb{R})$ as in the statement (ii). Take a cylindrical function $\chi\in C^\infty (\mathbb{R}^n)$ such that $\rmop{supp}\chi\subset U$ and $\chi=1$ near $\rmop{supp} \tilde\varphi^*a$. By the changing variables of pseudodifferential operators, we have 
\begin{equation}\label{eq_hwf_polar_change_inv}
    (\varphi_*\chi-a)^\mathrm{w}(\hbar r ,\theta, \hbar D_r, \hbar^2 D_\theta)=\varphi_* b^\mathrm{w}(\hbar; x, \hbar D)\varphi^*+O_{L^2\to L^2}(\hbar ^\infty), 
\end{equation}
where $b(\hbar; x, \xi)\in S^0(T^*\mathbb{R}^n)$ satisfies 
\begin{equation}\label{eq_hwf_polar_change_inv_wip}
    \begin{split}
        \rmop{supp}b(\hbar)
        &\subset \{ \tilde\varphi^{-1}(r, \theta, \rho, \eta) \mid (\hbar r, \theta, \rho, \hbar \eta)\in \rmop{supp}(\varphi_*\chi-a)\} \\
        &=\{ (x, \xi) \mid (\hbar x, \xi)\in \rmop{supp}(\chi-\tilde\varphi^* a)\}
    \end{split}
\end{equation}
modulo $O(\hbar^\infty)$ by \eqref{eq_lift_polar_explicit}. Thus we can take a symbol $c(x, \xi)\in C_c^\infty (T^*\mathbb{R}^n)$ such that $\chi-\tilde\varphi^*a=0$ near $\rmop{supp}c$. Then 
\[
    \rmop{supp}c(\hbar x, \xi)\cap \rmop{supp}b(\hbar; x, \xi)=\varnothing. 
\]
Thus \eqref{eq_hwf_polar_change_inv_wip} implies 
\begin{equation}\label{eq_hwf_polar_change_inv_2}
    c^\mathrm{w}(\hbar x, \hbar D)b^\mathrm{w}(\hbar; x, \hbar D)=O_{L^2\to L^2}(\hbar^\infty). 
\end{equation}
Since 
\[
    \rmop{supp}c (\hbar x, \xi)\cap \rmop{supp}(\chi(\hbar x))=\varnothing,     
\]
\eqref{eq_hwf_polar_change_inv_2} becomes 
\[
    c^\mathrm{w}(\hbar x, \hbar D)\varphi^*-\varphi^*a^\mathrm{w}(\hbar r, \theta, \hbar D_r, \hbar^2 D_\theta)=O_{L^2\to L^2}(\hbar^\infty). 
\]
Hence, since $a^\mathrm{w}(\hbar r, \theta, \hbar D_r, \hbar^2 D_\theta)\varphi_*(\chi u)=O_{L^2}(\hbar^\infty)$, we have 
\[
    c^\mathrm{w}(\hbar x, \hbar D)u=O_{L^2}(\hbar^\infty). \qedhere
\]
\end{proof}

\begin{proof}[Proof of Corollary \ref{coro_cylindrical_homogeneous}]
Assume that $x_0\neq 0$ and $(x_0, (\xi_0\cdot \hat x_0)\hat x_0)\not\in \rmop{HWF}(u)$, where $\hat x_0:=x_0/|x_0|$. By Proposition \ref{prop_hwf_polar}, there exist polar coordinates $\varphi: U\to V$, cylindrical function $\chi\in C^\infty (\mathbb{R}^n)$ with $\rmop{supp}\chi \subset U$ and $\chi=1$ near the set $\{ \lambda x \mid \lambda\geq 1\}$, and $a\in C_c^\infty (V)$ with $a=1$ near $\Psi (x, \xi)$ such that 
\begin{equation}\label{eq_hwf_polar_appl}
\|a^\mathrm{w}(\hbar r, \theta, \hbar D_r, \hbar^2 D_\theta)\varphi_*(\chi u)\|_{L^2(\mathbb{R}^n; \Omega^{1/2})}=O(\hbar^\infty)
\end{equation}
holds. 

Since $\tilde\varphi(x_0, (\xi_0\cdot\hat x_0)\hat x_0)=(|x_0|, \hat x_0, \xi_0\cdot \hat x_0, 0)$, the symbol $a$ is identically equals to 1 near $(|x_0|, \hat x_0, \xi_0\cdot \hat x_0, 0)$. Thus we can take a symbol $c(r, \theta, \rho, \eta)\in C_c^\infty (T^*(\mathbb{R}_+\times V^\prime))$ such that $a=1$ near the set 
\[
    \{ (r, \theta, \rho, \eta) \mid (r, \theta, \rho, \eta+\eta_0)\in \rmop{supp}c \}. 
\]
Then 
\[
    c^\mathrm{w}(\hbar r, \theta, \hbar D_r, \hbar D_\theta)a^\mathrm{w}(\hbar r, \theta, \hbar D_r, \hbar^2 D_\theta)
    =c^\mathrm{w}(\hbar r, \theta, \hbar D_r, \hbar D_\theta)+O_{L^2\to L^2}(\hbar^\infty)
\] 
for sufficiently small $\hbar>0$. Thus \eqref{eq_hwf_polar_appl} implies 
\[
    c^\mathrm{w}(\hbar r, \theta, \hbar D_r, \hbar D_\theta)\varphi_*(\chi u)=O_{L^2}(\hbar^\infty). \qedhere
\]
\end{proof}



\section{Estimates for Heisenberg derivatives}\label{sect_heisenberg_derivative}

\subsection{Estimates for symbols}

We begin with the estimate of $\psi_j=\varphi^*\tilde\psi_j$. Recall the definition \eqref{eq_defi_chi4} and \eqref{eq_defi_psi-1}. 

\begin{lemm}\label{lemm_estimate_tildepsi}
    For all multiindices $\alpha=(\alpha_0, \alpha^\prime), \beta=(\beta_0, \beta^\prime)\in \mathbb{Z}_{\geq 0}\times \mathbb{Z}_{\geq 0}^{n-1}$, the estimates
    \[ \| \partial_r^{\alpha_0}\partial_\theta^{\alpha^\prime} \partial_\rho^{\beta_0}\partial_\eta^{\beta^\prime} \tilde\psi_j(t)\|_{L^\infty} \leq C_{j\alpha\beta} t^{-\alpha_0}, \quad 
    \| \partial_r^{\alpha_0}\partial_\theta^{\alpha^\prime} \partial_\rho^{\beta_0}\partial_\eta^{\beta^\prime}\partial_t \tilde\psi_j(t)\|_{L^\infty} \leq C_{j\alpha\beta} t^{-\alpha_0}\]
    hold. 
\end{lemm}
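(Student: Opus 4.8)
The plan is to carry out everything in the polar coordinates $\varphi: U\to V$ used in the construction, in which $\tilde\psi_j$ is simply the product $\chi_{1j}\chi_{2j}\chi_{3j}\chi_{4j}$ of bump functions: since $\tilde\psi_j=\tilde\varphi^*(\chi_{1j}\chi_{2j}\chi_{3j}\chi_{4j})$ and $\tilde\varphi$ is the canonical lift of $\varphi$, pulling back by $\tilde\varphi$ is the identity once both sides are expressed in these coordinates. The decisive structural feature is that the four factors depend on pairwise disjoint groups of variables ($\chi_{1j}$ on $r$, $\chi_{2j}$ on $\theta$, $\chi_{3j}$ on $\rho$, $\chi_{4j}$ on $\eta$), so that $\partial_r^{\alpha_0}\partial_\theta^{\alpha^\prime}\partial_\rho^{\beta_0}\partial_\eta^{\beta^\prime}\tilde\psi_j=(\partial_r^{\alpha_0}\chi_{1j})(\partial_\theta^{\alpha^\prime}\chi_{2j})(\partial_\rho^{\beta_0}\chi_{3j})(\partial_\eta^{\beta^\prime}\chi_{4j})$, and it is enough to bound each factor separately. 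I would first record that every factor is $C^\infty$, since $s\mapsto\chi(|s|)$ is smooth on any Euclidean space: $\chi$ is identically $1$ near $0$, so the absolute values are harmless, and all derivatives of $s\mapsto\chi(|s|)$ are bounded (they are supported where $|s|$ is bounded away from $0$).

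Next come the per-factor estimates, which follow from the chain rule together with the scalings built into \eqref{eq_defi_chi4}. Writing $\chi_{1j}=\phi\big((r-r(t))/(4\delta_j t)\big)$ with $\phi(s)=\chi(|s|)$, the argument is affine in $r$, so $\partial_r^{\alpha_0}\chi_{1j}=(4\delta_j t)^{-\alpha_0}\phi^{(\alpha_0)}\big((r-r(t))/(4\delta_j t)\big)$ and hence $\|\partial_r^{\alpha_0}\chi_{1j}\|_{L^\infty}\le C_{j\alpha_0}t^{-\alpha_0}$. For $\chi_{2j},\chi_{3j},\chi_{4j}$ the relevant scale is $\delta_j-t^{-\lambda}$; taking $T_0$ large (as in the construction) so that $\delta_j-t^{-\lambda}\ge\delta_j/2$ for $t\ge T_0$, each $\theta$-, $\rho$-, or $\eta$-derivative costs at most a bounded constant, giving $\|\partial_\theta^{\alpha^\prime}\chi_{2j}\|_{L^\infty}+\|\partial_\rho^{\beta_0}\chi_{3j}\|_{L^\infty}+\|\partial_\eta^{\beta^\prime}\chi_{4j}\|_{L^\infty}\le C_{j\alpha\beta}$. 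Multiplying the four bounds yields the first estimate.

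For the estimate with $\partial_t$, I would use the Leibniz rule so that $\partial_t$ lands on exactly one factor, and estimate each resulting term as above with one factor replaced by its $t$-derivative. The point is that $\partial_t$ produces no growth. For $\chi_{1j}$, $\partial_t$ of the argument equals $-\dot r(t)/(4\delta_j t)-(r-r(t))/(4\delta_j t^2)$; on $\mathrm{supp}\,\chi_{1j}$ one has $|r-r(t)|\le 8\delta_j t$, and $\dot r(t)=c(r(t),\theta(t))^{-2}\rho(t)$ is bounded along the orbit by the energy conservation law, so $\partial_t\chi_{1j}=O(t^{-1})$ and each further $\partial_r$ still contributes a factor $t^{-1}$, whence $\|\partial_r^{\alpha_0}\partial_t\chi_{1j}\|_{L^\infty}\le C_{j\alpha_0}t^{-\alpha_0-1}$. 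For $\chi_{2j},\chi_{3j},\chi_{4j}$, $\partial_t$ of the argument involves $\dot\theta(t)$, $\dot\rho(t)$, $\dot\eta(t)$ and $\lambda t^{-\lambda-1}$ times the quantities $|\theta-\theta(t)|$ etc.\ (bounded on the support), all divided by powers of $\delta_j-t^{-\lambda}\ge\delta_j/2$; since $\dot\theta(t),\dot\rho(t),\dot\eta(t)$ are bounded along the orbit (Theorem~\ref{theo_classical_estimate} together with Hamilton's equations), these $t$-derivatives are $O(1)$. Summing the finitely many Leibniz terms gives $\|\partial_r^{\alpha_0}\partial_\theta^{\alpha^\prime}\partial_\rho^{\beta_0}\partial_\eta^{\beta^\prime}\partial_t\tilde\psi_j\|_{L^\infty}\le C_{j\alpha\beta}t^{-\alpha_0}$.

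I do not expect a real obstacle here; the only points needing a little care are the smoothness of $\chi(|\cdot|)$ at the origin (handled by $\chi\equiv1$ near $0$), the lower bound $\delta_j-t^{-\lambda}\ge\delta_j/2$ (which is why $t$ is restricted to $[T_0,\infty)$ with $T_0$ large), and the control of $\partial_t\chi_{1j}$, where $|r-r(t)|$ may be as large as $O(\delta_j t)$ but is divided by $t^2$ and where one needs boundedness of $\dot r(t),\dot\theta(t),\dot\rho(t),\dot\eta(t)$ along the orbit, supplied by Theorem~\ref{theo_classical_estimate}.
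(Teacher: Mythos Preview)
Your proposal is correct and follows essentially the same approach as the paper: reduce via the Leibniz rule to the individual factors $\chi_{1j},\chi_{2j},\chi_{3j},\chi_{4j}$ (which depend on disjoint groups of variables), apply the chain rule with the built-in scalings $(4\delta_j t)^{-1}$ and $(\delta_j-t^{-\lambda})^{-1}$, and for the $\partial_t$ estimate use the Hamilton equations together with the boundedness of $\rho(t),\theta(t),\eta(t)$ along the orbit supplied by Theorem~\ref{theo_classical_estimate}. The paper carries out the same computations factor by factor with slightly more explicit formulas (introducing auxiliary functions like $\tilde\chi(x)=|x|\chi'(|x|)$ and $F_k(x)=x_k\chi'(|x|)/|x|$), but the underlying argument is identical.
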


\begin{rema*}
    We will only use the boundedness of derivatives of $\tilde\psi_j$, and the decay $t^{-\alpha_0}$ is not necessary for a proof of our main theorem. However, since $r\sim t$ on the support of $\tilde\psi_j$, Lemma \ref{lemm_estimate_tildepsi} states that $\partial_r^{\alpha_0}\partial_\theta^{\alpha^\prime} \partial_\rho^{\beta_0}\partial_\eta^{\beta^\prime} \tilde\psi_j(t)=O(r^{-\alpha_0})$ and $\partial_r^{\alpha_0}\partial_\theta^{\alpha^\prime} \partial_\rho^{\beta_0}\partial_\eta^{\beta^\prime}\partial_t \tilde\psi_j(t)=O(r^{-\alpha_0})$. 
\end{rema*}

\begin{proof}
    By the Leibnitz rule, it is enough to estimate each $\chi_{1j}, \chi_{2j}, \chi_{3j}, \chi_{4j}$ and their derivatives respectively. 
    
    \fstep{Estimate of $\bm{\chi_{1j}}$}A direct calculation shows that 
    \[
        |\partial_r^{\alpha_0} \chi_{1j}(t, r)|=|\chi^{(\alpha_0)}|(4\delta_j t)^{-\alpha_0}\leq \| \chi^{(\alpha_0)}\|_{L^\infty} (4\delta_j t)^{-\alpha_0}. 
    \]
    The time derivative of $\chi_{1j}$ is
    \[ \partial_t \chi_{1j}=\frac{1}{4\delta_j t} \chi^\prime \left(\frac{|r-r(t)|}{4\delta_j t}\right)\left(-\frac{\diff r}{\diff t}(t)\rmop{sgn} (r-r(t))-\frac{|r-r(t)|}{t}\right). \]
    The $r$ derivatives of the first term is estimated as 
    \begin{equation}\label{eq_r_derivative_tpsi} \left| \partial_r^{\alpha_0} \left(\chi^\prime \left(\frac{|r-r(t)|}{4\delta_j t}\right)\frac{\diff r}{\diff t}(t)\rmop{sgn} (r-r(t))\right)\right|\leq C\| \chi^{(\alpha_0+1)}\|_{L^\infty} (4\delta_j t)^{-\alpha_0}
    \end{equation}
    by the Hamilton equation \eqref{eq_hamilton_equation_radial} and the boundedness of $|\rho(t)|$ insured by Theorem \ref{theo_classical_estimate}. The second term of \eqref{eq_r_derivative_tpsi} is written as 
    \[
        \chi^\prime \left(\frac{|r-r(t)|}{4\delta_j t}\right)\frac{|r-r(t)|}{t}=4\delta_j \tilde \chi \left(\frac{r-r(t)}{4\delta_j t}\right), 
    \]
        where $\tilde \chi (x):=|x|\chi^\prime (|x|)\in C_c^\infty(\mathbb{R})$. 
        Thus a similar estimate to \eqref{eq_r_derivative_tpsi} shows that 
    \[ 
        \left|\partial_r^{\alpha_0} \left(\chi^\prime \left(\frac{|r-r(t)|}{4\delta_j t}\right)\frac{|r-r(t)|}{t}\right)\right|\leq \| \tilde \chi \|_{L^\infty} (4\delta_j)^{-\alpha_0+1} t^{-\alpha_0}.
    \]
    Hence if $0<\delta_j\leq 1/4$, then $|\partial_r^{\alpha_0}\partial_t \chi_{1j}|\leq C_{j\alpha_0} t^{-\alpha_0-1}$. 

    \fstep{Estimate of $\bm{\partial^\alpha\chi_{2j}}$}A similar estimate to $\chi_{1j}$ shows 
    \[ |\partial_\theta^{\alpha^\prime} \chi_{2j}(t, \theta)|\leq \| \partial_\theta^{\alpha^\prime} \chi\|_{L^\infty} (\delta_j-t^{-\lambda})^{-|\alpha^\prime|}. \]

    The time derivative of $\chi_{2j}$ is 
    \begin{equation}\label{eq_t_derivative_chi2}
        \begin{split}
            &\partial_t \chi_{2j}(t, \theta) \\
            &=\frac{1}{\delta_j-t^{-\lambda}} \chi^\prime \left(\frac{|\theta-\theta(t)|}{\delta_j-t^{-\lambda}}\right)\left(-\frac{\diff \theta}{\diff t}(t)\cdot\frac{\theta-\theta(t)}{|\theta-\theta(t)|}-\frac{\lambda t^{-\lambda-1}|\theta-\theta(t)|}{\delta_j-t^{-\lambda}}\right). 
        \end{split}
    \end{equation}
    We set $F_k(x):=x_k\chi^\prime (|x|)/|x|\in C_c^\infty(\mathbb{R}^{n-1})$. Then we have 
    \[\chi^\prime \left(\frac{|\theta-\theta(t)|}{\delta_j-t^{-\lambda}}\right)\frac{\diff \theta}{\diff t}(t)\cdot\frac{\theta-\theta(t)}{|\theta-\theta(t)|}=\sum_{k, l=1}^{n-1}h^{kl}(r(t), \theta (t))F_k\left(\frac{\theta-\theta(t)}{\delta_j-t^{-\lambda}}\right)\eta_l(t)
    \]
    by the Hamilton equation \eqref{eq_hamilton_equation_angle}. We apply the boundedness of $|\eta(t)|$ with respect to the fiber metric $h^*(1, \theta, \partial_\theta)$ by Theorem \ref{theo_classical_estimate} and $h^*(r, \theta, \eta)\leq Ch^*(1, \theta, \eta)$ by \eqref{eq_ineq_f_logbdd} and \eqref{eq_ineq_model_bdd}. Then we obtain 
    \[
        \left|\partial_\theta^{\alpha^\prime}\left(\sum_{k, l=1}^{n-1}h^{kl}(r(t), \theta (t))F_k\left(\frac{\theta-\theta(t)}{\delta_j-t^{-\lambda}}\right)\eta_l(t)\right)\right|
        \leq C_{j\alpha^\prime}. 
    \]
    For the second term of \eqref{eq_t_derivative_chi2}, if we set $\tilde \chi (x)=|x|\chi^\prime (|x|)\in C_c^\infty(\mathbb{R}^{n-1})$, then 
    \begin{align*}
        \left|\partial_\theta^{\alpha^\prime} \left(\chi^\prime \left(\frac{|\theta-\theta(t)|}{\delta_j-t^{-\lambda}}\right)\frac{|\theta-\theta(t)|}{\delta_j-t^{-\lambda}}\right)\right|
    =\left|\partial_\theta^{\alpha^\prime} \left( \tilde\chi \left(\frac{\theta-\theta(t)}{\delta_j-t^{-\lambda}}\right)\right)\right| 
    \leq C_{j\alpha^\prime}. 
    \end{align*}
    Hence the $\theta$ derivative of \eqref{eq_t_derivative_chi2} is estimated as $|\partial_\theta^{\alpha^\prime}\partial_t \chi_{2j}|\leq C_{j\alpha^\prime}$. 

    \fstep{Estimate of $\bm{\partial^\alpha\chi_{3j}}$}By the same procedure as the estimate of $\chi_{2j}$, we have $|\partial_\rho^{\beta_0} \partial_t^a \chi_{3j}|\leq C_{j\beta_0}$ ($\beta_0\geq 0$, $a=0, 1$). 

    \fstep{Estimate of $\bm{\partial^\alpha\chi_{4j}}$}We have $|\partial_\eta^{\beta^\prime}\chi_{4j}|\leq C_{j\beta^\prime}$ by the same procedure as the estimate of $\partial_\theta^{\alpha^\prime}\chi_{2j}$. The $t$ derivative is 
    \begin{equation}\label{eq_t_derivative_chi4}
        \partial_t \chi_{4j}=\frac{1}{\delta_j-t^{-\lambda}} \chi^\prime \left(\frac{|\eta-\eta(t)|}{\delta_j-t^{-\lambda}}\right)\left(-\frac{\diff \eta}{\diff t}(t)\cdot\frac{\eta-\eta(t)}{|\eta-\eta(t)|}-\lambda t^{-\lambda-1}|\eta-\eta(t)|\right). 
    \end{equation}
    The $\eta$ derivative of the first term is estimated as 
    \begin{align*}
        &\left|\partial_\eta^{\beta^\prime}\left(\chi^\prime \left(\frac{|\eta-\eta(t)|}{\delta_j-t^{-\lambda}}\right)\frac{\diff \eta}{\diff t}(t)\cdot\frac{\eta-\eta(t)}{|\eta-\eta(t)|}\right)\right| \\
    &=\left|\frac{\diff \eta}{\diff t} (t)\cdot\partial_\eta^{\beta^\prime}\left(F\left(\frac{\eta-\eta(t)}{\delta_j-t^{-\lambda}}\right)\right)\right|\leq C\left|\frac{\diff \eta}{\diff t} (t)\right|.
\end{align*}
    By the angular momentum component of Hamilton equations
    \begin{equation}\label{eq_hamilton_equation_angular_momentum}
        \frac{\diff \eta_j}{\diff t} (t)=-\frac{1}{2}\frac{\partial h^{kl}}{\partial \theta_j}(r(t), \theta(t))\eta_k(t)\eta_l(t)
    \end{equation}
    and $|\eta(t)|\leq C$ by Theorem \ref{theo_classical_estimate}, we obtain 
    \[ \left|\partial_\eta^{\beta^\prime}\left(\chi^\prime \left(\frac{|\eta-\eta(t)|}{\delta_j -^{-\lambda} }\right)\frac{d \eta}{\diff t}(t)\cdot\frac{\eta-\eta(t)}{|\eta-\eta(t)|}\right)\right|\leq C_{j\beta^\prime}.  \]
    The $\eta$ derivatives of the second term in \eqref{eq_t_derivative_chi4} are estimated as 
    \begin{align*}
        &\left|\partial_\eta^{\beta^\prime}\left(\chi^\prime \left(\frac{|\eta-\eta(t)|}{\delta_j-t^{-\lambda}}\right)|\eta-\eta(t)|\right)\right| \\
        &=(\delta_j-t^{-\lambda})  \left|\partial_\eta^{\beta^\prime} \left(\tilde\chi \left(\frac{\eta-\eta(t)}{\delta_j-t^{-\lambda}}\right)\right)\right|
    \leq C_{j\beta^\prime}. 
    \end{align*}
    Hence the derivatives of \eqref{eq_t_derivative_chi4} are estimated as $|\partial_\eta^{\beta^\prime}\partial_t \chi_{4j}|\leq C_{j\beta^\prime}$. 
\end{proof}

Next we prove the positivity and an $O(\jbracket{t}^{-1})$ decay as $t\to\infty$ of the Lagrange derivative $\partial_t \tilde\psi_j(t)+\{ \tilde\psi_j(t), h_0\}$. Both of them play a crucial role in estimates of Heisenberg derivatives in the proof of Theorem \ref{theo_positive_hd}.  

\begin{lemm}\label{lemm_positive_lagrange_derivative}
    Take sufficiently small $0<\delta_0<\delta_1<\cdots <2\delta_0$ and $0<\lambda<\min\{2c_0-1, \mu\}$ in \eqref{eq_defi_chi4}. Then the inequalities
    \[
        \partial_t \tilde\psi_j(t)+\{ \tilde\psi_j(t), h_0\}\geq 0
    \]
    and
    \[
        |\partial_r^{\alpha_0}\partial_\theta^{\alpha^\prime} \partial_\rho^{\beta_0}\partial_\eta^{\beta^\prime}(\partial_t \tilde\psi_j(t)+\{ \tilde\psi_j(t), h_0\})|\leq C_{j\alpha\beta}\jbracket{t}^{-1}
    \]
    hold for sufficiently large $t>0$. 
\end{lemm}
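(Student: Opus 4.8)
The plan is to compute the Lagrange derivative directly in the polar coordinates $(r,\theta,\rho,\eta)$ and to exploit that $\chi$ is nonincreasing. Since $\tilde\psi_j=\tilde\varphi^*\Pi$ with $\Pi:=\chi_{1j}\chi_{2j}\chi_{3j}\chi_{4j}$, and $\tilde\varphi$ is the cotangent lift of a diffeomorphism, hence a canonical transformation, we have $\partial_t\tilde\psi_j+\{\tilde\psi_j,h_0\}=\tilde\varphi^*(\partial_t\Pi+\{\Pi,h_0\})$, so it suffices to work with $\Pi$ and $h_0=\tfrac12(c^{-2}\rho^2+h^*(r,\theta,\eta))$. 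Writing $\chi_{ij}=\chi(g_{ij})$ with $g_{1j}=|r-r(t)|/(4\delta_j t)$ and $g_{ij}=|{\cdot}-{\cdot}(t)|/(\delta_j-t^{-\lambda})$ for $i=2,3,4$, the Leibniz rule gives
\[
  \partial_t\Pi+\{\Pi,h_0\}=\sum_{i=1}^4\chi'(g_{ij})\bigl(\partial_tg_{ij}+\{g_{ij},h_0\}\bigr)\prod_{k\neq i}\chi_{kj}.
\]
Since $\chi'\le0$ and the $\chi_{kj}\ge0$, the positivity claim reduces to showing $\partial_tg_{ij}+\{g_{ij},h_0\}\le0$ on the set where $\chi'(g_{ij})\neq0$, i.e.\ where $1<g_{ij}<2$; this is the propagation-of-positivity mechanism of Nakamura.

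Two preliminary facts are used throughout. First, by Theorem \ref{theo_classical_estimate}, $r(t)=\rho_\infty t+O(1)$ and $(\rho(t),\theta(t),\eta(t))\to(\rho_\infty,\theta_\infty,\eta_\infty)$; hence for $\delta_0$ (so all $\delta_j$) small and $t$ large, on $\rmop{supp}\Pi$ one has $r\asymp t$, $(\theta,\eta)$ confined to a fixed compact subset of the chart, $|\rho-\rho(t)|\le2\delta_j$, and $\delta_j-t^{-\lambda}\ge\delta_j/2$. Second, on such a region \eqref{eq_ineq_model_bdd} and $f(r)\ge C^{-1}r^{c_0}$ give $h^*(r,\theta,\eta)=O(f(r)^{-2})=O(t^{-2c_0})$, while $|\partial_{r,\theta}^\alpha c|=O(r^{-1-\mu})$ by Assumption \ref{assu_classical} \ref{assu_sub_short_range} and Assumption \ref{assu_higher_derivative}, and $|\partial_{r,\theta}^\alpha h^*|\le Ch^*$ by \eqref{eq_ineq_angular_bdd} and Assumption \ref{assu_higher_derivative}; substituting these into $h_0$ yields $|\partial_rh_0|,|\partial_\theta h_0|,|\partial_\eta h_0|\le Ct^{-1-\delta'}$, with $\delta':=\min\{2c_0-1,\mu\}>\lambda$, both at $z\in\rmop{supp}\Pi$ and at the orbit point $\gamma(t):=(r(t),\theta(t),\rho(t),\eta(t))$, whereas $\partial_\rho h_0=c^{-2}\rho=\rho+O(t^{-1-\mu})$ is only bounded.

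Each $g_{ij}$ depends (besides on $t$) on only one of $r,\theta,\rho,\eta$, so $\{g_{ij},h_0\}$ involves a single partial of $h_0$ and the $\partial_t$-term involves one component of the Hamilton equations \eqref{eq_hamilton_equation_radial}, \eqref{eq_hamilton_equation_angle}, \eqref{eq_hamilton_equation_angular_momentum} at $\gamma(t)$. Collecting terms --- the non-smoothness of $|{\cdot}|$ being harmless, since the combinations $\chi'(|x|/\delta)x/|x|$ are smooth, exactly as in the proof of Lemma \ref{lemm_estimate_tildepsi} --- one finds that $\partial_tg_{ij}+\{g_{ij},h_0\}$ splits, for $i\in\{2,3,4\}$, into a ``transport-mismatch'' term equal to $(\delta_j-t^{-\lambda})^{-1}$ times a difference of a first-order derivative of $h_0$ at $z$ and at $\gamma(t)$, hence $O(t^{-1-\delta'})$, plus a ``window-breathing'' term $-|{\cdot}-{\cdot}(t)|\,\lambda t^{-\lambda-1}(\delta_j-t^{-\lambda})^{-2}\le0$; and for $i=1$ into $\operatorname{sgn}(r-r(t))(4\delta_j t)^{-1}(\partial_\rho h_0(z)-\partial_\rho h_0(\gamma(t)))$ (of size $\le3/(4t)$, since $|\partial_\rho h_0(z)-\partial_\rho h_0(\gamma(t))|\le|\rho-\rho(t)|+Ct^{-1-\mu}\le3\delta_j$) plus $-|r-r(t)|(4\delta_j t^2)^{-1}\le0$. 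Where $\chi'(g_{ij})\neq0$ we have $|{\cdot}-{\cdot}(t)|\ge\delta_j-t^{-\lambda}\ge\delta_j/2$ (resp.\ $|r-r(t)|\ge4\delta_j t$), so the breathing term there is $\le-c_jt^{-\lambda-1}$ (resp.\ $\le-t^{-1}$); since $\lambda<\delta'$, for $t$ large it dominates the mismatch, giving $\partial_tg_{ij}+\{g_{ij},h_0\}\le0$ there, and summing over $i$ proves $\partial_t\tilde\psi_j+\{\tilde\psi_j,h_0\}\ge0$.

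For the derivative estimate, the same decomposition shows $\partial_tg_{ij}+\{g_{ij},h_0\}=O(t^{-1})$ on $\rmop{supp}\Pi$, and every $(r,\theta,\rho,\eta)$-derivative of it is again $O(t^{-1})$: differentiating the mismatch term produces derivatives of $c^{-2}\rho$, $h^*$, $\sum h^{jk}\eta_k$ and $\partial_\theta h^{kl}\eta_k\eta_l$, all controlled on $\rmop{supp}\Pi$ by Assumption \ref{assu_higher_derivative}, \eqref{eq_ineq_model_bdd}, \eqref{eq_ineq_angular_bdd} and Theorem \ref{theo_classical_estimate}, while the breathing term and its derivatives are manifestly $O(t^{-\lambda-1})$. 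As $\chi'(g_{ij})$ and the $\chi_{kj}$ have all derivatives bounded by Lemma \ref{lemm_estimate_tildepsi} and its proof, the Leibniz rule gives $|\partial_r^{\alpha_0}\partial_\theta^{\alpha'}\partial_\rho^{\beta_0}\partial_\eta^{\beta'}(\partial_t\tilde\psi_j+\{\tilde\psi_j,h_0\})|\le C_{j\alpha\beta}\jbracket{t}^{-1}$. The main obstacle is the first preliminary fact together with the sharp $O(t^{-1-\delta'})$ bound on the transport mismatch: one must simultaneously use that $\rmop{supp}\Pi$ lies in the region $r\asymp t$ with bounded angular window, that $h^*$ is therefore genuinely small there, and that the orbit has already converged at the rate of Theorem \ref{theo_classical_estimate} --- and it is precisely the strict inequality $\lambda<\delta'=\min\{2c_0-1,\mu\}$ that lets the window-breathing term beat the mismatch.
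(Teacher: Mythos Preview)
Your proof is correct and follows essentially the same route as the paper: both decompose $\partial_t\tilde\psi_j+\{\tilde\psi_j,h_0\}$ via the Leibniz rule into four pieces indexed by $i=1,\dots,4$, and for each piece show that the negative ``window-breathing'' contribution (of order $t^{-1}$ for $i=1$ and $t^{-1-\lambda}$ for $i=2,3,4$) dominates the ``transport-mismatch'' term (of order $t^{-1-\min\{2c_0-1,\mu\}}$, or $\le 3/(4t)$ for $i=1$) on the set where $\chi'\neq 0$, using $\lambda<\min\{2c_0-1,\mu\}$. Your conceptual framing (mismatch versus breathing) is a cleaner packaging of exactly the explicit case-by-case computations the paper carries out for each $\chi_{ij}$.
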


\begin{proof}
    As in the proof of Lemma \ref{lemm_estimate_tildepsi}, we estimate each $\partial_t\chi_{jk}+\{ \chi_{jk}, h_0\}$ respectively. We also borrow following functions from the proof of Lemma \ref{lemm_estimate_tildepsi}: 
    \[
        \tilde \chi (x):=|x|\chi^\prime (|x|)\in C_c^\infty(\mathbb{R}), \quad F_k(x):=x_k\chi^\prime (|x|)/|x|\in C_c^\infty(\mathbb{R}^{n-1}). 
    \]

    \fstep{$\bm{\partial_t \chi_{1j}+\{ \chi_{1j}, h_0\}}$}We have
    \begin{align}
        &\partial_t \chi_{1j} +\{ \chi_{1j}, h_0\} \nonumber\\
        &=\frac{|\chi^\prime|}{2\delta_j t} \left(\frac{|r-r(t)|}{t}+(c(r(t), \theta(t))^{-2}\rho(t)-c(r, \theta)^{-2}\rho)\rmop{sgn}(r-r(t))\right) \label{eq_lagrange_derivative_11}\\
        &=-\frac{1}{t} \tilde\chi \left(\frac{r-r(t)}{2\delta_j t}\right)-\frac{c(r(t), \theta(t))^{-2}\rho(t)-c(r, \theta)^{-2}\rho}{2\delta_j t} F\left(\frac{r-r(t)}{2\delta_j t}\right). \label{eq_lagrange_derivative_12}
    \end{align}
    Here $F(x):=\chi^\prime (|x|)\rmop{sgn}x\in C_c^\infty (\mathbb{R})$. We recall the short range condition $c(r, \theta)=1+O(r^{-1-\mu})$ (Assumption \ref{assu_classical} \ref{assu_sub_short_range}). Since $|r-r(t)|\geq 4\delta_j t$ and $|\rho-\rho(t)|\leq 2\delta_j-2$ on the support of $(\partial\chi_{1j})\chi_{2j} \chi_{3j} \chi_{4j}$, we have
    \begin{align*}
        &|c(r(t), \theta(t))^{-2}\rho(t)-c(r, \theta)^{-2}\rho| \\
        &\leq c(r(t), \theta(t))^{-2}|\rho (t)-\rho|+|\rho||c(r(t), \theta(t))^{-2}-c(r, \theta)^{-2}| \\
        &\leq 2(1+Cr(t)^{-1-\mu})(\delta_j-)+C(1+\delta_j-)(r^{-1-\mu}+r(t)^{-1-\mu}) \\
        &\leq 2\delta_j+Ct^{-\lambda}, 
    \end{align*}
    and thus, by \eqref{eq_lagrange_derivative_11}, 
    \begin{align*}
        \partial_t \chi_{1j} +\{ \chi_{1j}, h_0\}
        \geq \frac{|\chi^\prime|}{2\delta_j t} \left(\frac{|r-r(t)|}{t}-|\rho(t)-\rho|\right)
        \geq \frac{|\chi^\prime|}{2\delta_j t}(4\delta_j-2\delta_j-2)\geq 0. 
    \end{align*}
    $r$ derivatives are estimated as 
        \begin{align*} 
        &|\partial_r^{\alpha_0} \partial_\rho^{\beta_0} (\partial_t \chi_{1j}(t, r) +\{ \chi_{1j}, h_0\})(t, r, \rho)| \\ 
        &\leq     
        \begin{cases}
            C(1+|\delta_j-t^{-\lambda}|)\delta_j^{-\alpha_0} t^{-\alpha_0-1} & \text{if } \beta_0=0, \\
            C(\delta_j t)^{-\alpha_0-1} & \text{if } \beta_0=1, \\
            0 & \text{if } \beta_0=2
        \end{cases} \\
    &\leq C_{j\alpha_0\beta_0}t^{-\alpha_0-1}
\end{align*}
by \eqref{eq_lagrange_derivative_12}. 

\fstep{$\bm{\partial_t \chi_{2j}+\{ \chi_{2j}, h_0\}}$}By the Hamilton equation \eqref{eq_hamilton_equation_angle}, we have
    \begin{align}
        &\partial_t \chi_{2j}+\{\chi_{2j}, h_0\} \nonumber\\
        &\begin{aligned}
        &=\frac{|\chi^\prime|}{\delta_j-t^{-\lambda}} \biggl(\frac{\lambda |\theta-\theta (t)|}{\delta_j-t^{-\lambda}} \\
        &\quad+\sum_{k, l=1}^{n-1}(h^{kl}(r(t), \theta (t))\eta_k (t)-h^{kl}(r, \theta)\eta_k)\frac{\theta_l -\theta_l (t)}{|\theta-\theta (t)|}\biggr)
        \end{aligned} \label{eq_lagrange_derivative_21}\\
        &
        \begin{aligned}
        &=-\frac{\lambda }{\delta_j-t^{-\lambda}} \tilde\chi \left(\frac{\theta-\theta(t)}{\delta_j-t^{-\lambda}}\right) \\
        &\quad -\frac{1}{\delta_j-t^{-\lambda}}\sum_{k, l=1}^{n-1}(h^{kl}(r(t), \theta (t))\eta_k (t)-h^{kl}(r, \theta)\eta_k) F_l\left(\frac{\theta-\theta(t)}{\delta_j-t^{-\lambda}}\right). 
        \end{aligned}\label{eq_lagrange_derivative_22}
    \end{align}
    Since $|\theta-\theta (t)|\geq \delta_j -$, $|r-r(t)|\leq 8\delta_jt$ and $|\eta-\eta(t)|\leq 2(\delta_j-t^{-\lambda})\leq 2\delta_j$ on the support of $\chi_{1j}(\partial \chi_{2j})\chi_{3j} \chi_{4j}$, we obtain the following inequality from \eqref{eq_lagrange_derivative_21}: 
    \begin{align*}
        &|h^*(r(t), \theta (t))\eta (t)-h^*(r, \theta)\eta| \\
        &\leq |h^*(r(t), \theta (t))(\eta (t)-\eta)|+|(h^*(r(t), \theta(t))-h^*(r, \theta))\eta| \\
        &\leq Cf(r(t))^{-2}+C(f(r(t))^{-2}+f(r)^{-2}) \\
        &\leq Cf(r(t))^{-2}
    \end{align*}
    and thus 
    \begin{align*}
        &\partial_t \chi_{2j}+\{\chi_{2j}, h_0\} \\
        &\geq \frac{|\chi^\prime|}{\delta_j-t^{-\lambda}} \left(\lambda -|h^*(r(t), \theta (t))\eta (t)-h^*(r, \theta)\eta|\right) \\
        &\geq \frac{|\chi^\prime|}{\delta_j-t^{-\lambda}} (\lambda -Cf(r(t))^{-2})\geq 0. 
    \end{align*}    
    Here we introduced a shorthand notation $\left(\sum_{l=1}^{n-1}h^{kl}(r, \theta)\eta_l\right)_{k=1}^{n-1}=h^*(r, \theta)\eta$. 
    We employ the estimates of classical orbits $r(t)\geq \rho_\infty t-C$, $|\eta(t)|\leq C$ by Theorem \ref{theo_classical_estimate}, $f(r)\geq C^{-1}r^{2c_0}$ by \eqref{eq_ineq_f_logbdd} and the assumption $0<\lambda<2c_0-1$, and we obtain
    \[ 
        \partial_t \chi_{2j}+\{\chi_{2j}, h_0\}\geq \frac{C}{\delta_j-t^{-\lambda}} (\lambda -Ct^{-2c_0})\geq 0. 
    \]
    Furthermore we have 
    \[ |\partial_r^{\alpha_0}\partial_\theta^{\alpha^\prime} \partial_\eta^{\beta^\prime}(\partial_t \chi_{2j}+\{\chi_{2j}, h_0\})|
        \leq C_{j\alpha\beta^\prime}t^{-1-\lambda}
\]
by differentiating \eqref{eq_lagrange_derivative_22}. 

\fstep{$\bm{\partial_t \chi_{3j}+\{ \chi_{3j}, h_0\}}$}We have 
\begin{align*}
    &\partial_t \chi_{3j} +\{ \chi_{3j}, h_0\} \\
    &=\frac{|\chi^\prime|}{\delta_j-t^{-\lambda}} \biggl(\frac{\lambda |\rho-\rho (t)|}{\delta_j-t^{-\lambda}} \\
    &\quad +(\partial_r h_0(r(t), \theta (t), \rho(t), \eta(t))-\partial_r h_0(r, \theta, \rho, \eta)) \frac{\rho -\rho (t)}{|\rho-\rho (t)|}\biggr) \\
    &=-\frac{\lambda }{\delta_j-t^{-\lambda}} \tilde\chi \left(\frac{\rho-\rho (t)}{\delta_j-t^{-\lambda}}\right) \\
    &\quad -\frac{\partial_r h_0(r(t), \theta (t), \rho(t), \eta(t))-\partial_r h_0(r, \theta, \rho, \eta)}{\delta_j-t^{-\lambda}} F\left(\frac{\rho-\rho (t)}{\delta_j-t^{-\lambda}}\right). 
\end{align*}
As in the estimate of $\partial_t \chi_{2j}+\{ \chi_{2j}, h_0\}$, on the support of $\chi_{1j} \chi_{2j} (\partial \chi_{3j})\chi_{4j}$, we have the estimate
\begin{align*}
    &|\partial_r h_0(r(t), \theta (t), \rho(t), \eta(t))-\partial_r h_0(r, \theta, \rho, \eta)| \\
    &\leq |\rho^2\partial_r c^{-2}/2|+|\partial_r h^*(r, \theta, \eta)| \\
    &\leq C(r^{-1-\mu}\rho^2+r^{-2c_0}|\eta|^2)
\end{align*}
by Assumption \ref{assu_higher_derivative} and thus 
\[
    \partial_t \chi_{3j}+\{ \chi_{3j}, h_0\}
    \geq \frac{|\chi^\prime|}{\delta_j-t^{-\lambda}} \left(\lambda -C(r^{-1-\mu}+r(t)^{-1-\mu})\right)\geq 0 
    \]
    and 
    \[ |\partial_r^{\alpha_0}\partial_\theta^{\alpha^\prime} \partial_\rho^{\beta_0}\partial_\eta^{\beta^\prime} (\partial_t \chi_{3j}+\{ \chi_{3j}, h_0\})|\leq C_{j\alpha\beta}(\lambda +t^{-1-\mu})\leq C_{j\alpha\beta}t^{-1-\lambda}. 
    \]

    \fstep{$\bm{\partial_t \chi_{4j}+\{ \chi_{4j}, h_0\}}$}By the Hamilton equation \eqref{eq_hamilton_equation_angular_momentum}, we have
\begin{align*}
    &\partial_t \chi_{4j}+\{ \chi_{4j}, h_0\} \\
    &=\frac{|\chi^\prime|}{\delta_j-t^{-\lambda}} \biggl(\frac{\lambda |\eta-\eta (t)|}{\delta_j-t^{-\lambda}} \\
    &\quad -\sum_{k=1}^{n-1} (\partial_{\theta_k}h_0(r(t), \theta (t), \rho(t), \eta(t))-\partial_{\theta_k}h_0(r, \theta, \rho, \eta))\frac{\eta_k -\eta_k (t)}{|\eta-\eta (t)|}\biggr) \\
    &=-\frac{\lambda }{\delta_j-t^{-\lambda}}\tilde\chi \left(\frac{\eta-\eta (t)}{\delta_j- t^{-\lambda}}\right) \\
    &\quad-\sum_{k=1}^{n-1}\frac{\partial_{\theta_k}h_0(r(t), \theta (t), \rho(t), \eta(t))-\partial_{\theta_k}h_0(r, \theta, \rho, \eta)}{\delta_j-t^{-\lambda}}  F_k\left(\frac{\eta-\eta (t)}{\delta_j-t^{-\lambda}}\right). 
\end{align*}
On the support of $\chi_{1j} \chi_{2j} \chi_{3j} (\partial \chi_{4j})$, we have
\[
    |\partial_{\theta_k}h_0(r, \theta, \rho, \eta)|
    \leq |\rho^2\partial_{\theta_k}c^{-2}|+|\partial_{\theta_k}h^*(r, \theta, \eta)|\leq C(r^{-1-\mu}\rho^2+r^{-2c_0}|\eta|^2)
\]
and thus 
\[
    \partial_t \chi_{4j}+\{ \chi_{4j}, h_0\}
    \geq \frac{|\chi^\prime|}{\delta_j-t^{-\lambda}} \left(\lambda -C(t^{-1-\mu}+t^{-2c_0})\right)\geq 0 
    \]
    and 
    \[ |\partial_r^{\alpha_0}\partial_\theta^{\alpha^\prime} \partial_\rho^{\beta_0}\partial_\eta^{\beta^\prime}  (\partial_t \chi_{4j}+\{ \chi_{4j}, h_0\})|\leq C_{j\alpha\beta}t^{-\lambda-1}. \qedhere 
    \]
\end{proof}

\subsection{Proof of Theorem \ref{theo_symbol_aim}}

We prove Theorem \ref{theo_symbol_aim} in this section. 

\begin{theo}\label{theo_positive_hd}
    There exist constants $c_1, c_2, c_3\ldots>0$ such that, if we set 
    \[
        F_k(t):=\Oph (\psi_0(t))^*\Oph(\psi_0(t))+t\sum_{j=1}^k c_j \hbar^j \Oph (\psi_j(t)), 
    \]
    for $k\geq 1$, then the inequality 
    \begin{equation}\label{eq_a0_from_below}
        \partial_t F_k(t)-i\hbar [F_k(t), H] 
         \geq O_{L^2\to L^2}(\hbar^{k+1})
    \end{equation}
    holds for all $\hbar\in (0, 1]$ uniformly in $t\geq 0$. 
\end{theo}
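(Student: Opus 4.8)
The plan is to reduce \eqref{eq_a0_from_below} to the symbolic calculus of Section~\ref{sect_psido_manifolds} (Theorems~\ref{theo_psido_composition}, \ref{theo_laplacian_psido}, \ref{theo_sharp_garding}) combined with the two positivity inputs already in hand: the scalar positivity $\partial_t\tilde\psi_j+\{\tilde\psi_j,h_0\}\ge0$ together with the $O(\jbracket{t}^{-1})$ decay from Lemma~\ref{lemm_positive_lagrange_derivative}, and the sharp G\aa rding inequality. Write $\mathcal DB:=\partial_tB-i\hbar[B,H]$ for the Heisenberg derivative in \eqref{eq_a0_from_below}. First I would compute $\mathcal D$ on a single quantized symbol: using Theorem~\ref{theo_laplacian_psido} to write $\hbar^2H=\Oph(h_0)+\hbar^2\Oph(w)$ with $w:=\tfrac12V_g+V\in S^0_\mathrm{cyl}(T^*M)$, and expanding the commutator by \eqref{eq_commutator}, one gets, for any family $a=a(t)$ bounded uniformly in $t$ in $S^m_\mathrm{cyl}(T^*M)$ with $\xi$-support in a fixed compact set off the zero section (which, by Theorem~\ref{theo_classical_estimate}, holds for every $\psi_j(t)$),
\[
    \mathcal D\Oph(a(t))=\Oph\!\bigl(\partial_ta(t)+\{a(t),h_0\}\bigr)+\hbar\,\Oph(d_a(\hbar,t))+O_{L^2\to L^2}(\hbar^\infty),
\]
where $d_a$ is a \emph{real} symbol, bounded uniformly in $t$ and supported in $\rmop{supp}a$ modulo $O(\hbar^\infty)$: the correction $\hbar^2c$ in \eqref{eq_commutator} for $[\Oph(a),\Oph(h_0)]$ is a purely imaginary symbol since that commutator is anti-self-adjoint, so $-i\hbar\cdot\hbar^{-2}\Oph(\hbar^2c)$ becomes an $O(\hbar)$ operator with real symbol, and the $\hbar^2\Oph(w)$ contribution is $O(\hbar^2)$ with real symbol.

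Next I would assemble $\mathcal DF_k$. Since $\psi_j$ is real, $\Oph(\psi_0)^*=\Oph(\psi_0)$, so by the previous step and \eqref{eq_commutator}
\[
    \mathcal D\bigl(\Oph(\psi_0)^*\Oph(\psi_0)\bigr)=2\,\Oph(\psi_0L_0)+2\hbar\,\Oph(\psi_0d_0)+\hbar^2\Oph(g_0)+O(\hbar^\infty),\qquad L_0:=\partial_t\psi_0+\{\psi_0,h_0\},
\]
the antisymmetric $O(\hbar)$ term $\tfrac{i\hbar}{2}\{\psi_0,L_0\}$ and its adjoint cancelling on taking the real part, while the transport equation \eqref{eq_transport} gives $L_0=\alpha(t)\bigl(\partial_t\tilde\psi_0+\{\tilde\psi_0,h_0\}\bigr)\ge0$ by Lemma~\ref{lemm_positive_lagrange_derivative}. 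Similarly $\mathcal D\bigl(t\,\Oph(\psi_j)\bigr)=\Oph(\psi_j)+\Oph(tL_j)+\hbar\,\Oph(td_j)+O(\hbar^\infty)$ with $L_j:=\alpha(t)\bigl(\partial_t\tilde\psi_j+\{\tilde\psi_j,h_0\}\bigr)\ge0$ and $\psi_j\ge0$; by the $\jbracket{t}^{-1}$ decay in Lemma~\ref{lemm_positive_lagrange_derivative} the symbols $tL_j$ and $td_j$ lie in $S^0_\mathrm{cyl}(T^*M)$ uniformly in $t$. Thus, modulo $O(\hbar^\infty)$, $\mathcal DF_k$ is a sum of the nonnegative principal contributions $2\Oph(\psi_0L_0)$ and $c_j\hbar^j\Oph(tL_j)$, the nonnegative reservoirs $c_j\hbar^j\Oph(\psi_j)$, and error operators of order $\hbar^{j+1}$ ($0\le j\le k$) whose symbols are supported in the cutoff regions.

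The induction is on $k$. The claim to carry is that there are $c_1,\dots,c_k>0$, a constant $C_k\ge0$, and a real $\beta_k\in S^0_\mathrm{cyl}(T^*M)$ bounded uniformly in $t$ and supported (mod $O(\hbar^\infty)$) in the cutoff region of $\psi_0,\dots,\psi_k$, with $\mathcal DF_k(t)\ge-\hbar^{k+1}\Oph(\beta_k)-C_k\hbar^{k+1}+O_{L^2\to L^2}(\hbar^\infty)$ uniformly in $t$; Theorem~\ref{theo_L2_bdd_cyl} then yields \eqref{eq_a0_from_below}. The base case is the preceding paragraph plus one application of Theorem~\ref{theo_sharp_garding} to $2\psi_0L_0\ge0$. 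For the step $k\rightsquigarrow k+1$ one adds $c_{k+1}\hbar^{k+1}\bigl(\Oph(\psi_{k+1})+\Oph(tL_{k+1})+\hbar\Oph(td_{k+1})\bigr)$: choosing the cutoff parameter $\delta_{k+1}$ within the admissible window of Lemma~\ref{lemm_positive_lagrange_derivative} so that $\psi_{k+1}$ is bounded below on $\rmop{supp}\beta_k$, and then $c_{k+1}$ large enough that $c_{k+1}\psi_{k+1}$ dominates the negative part of the order-$\hbar^{k+1}$ bad symbol, two further applications of Theorem~\ref{theo_sharp_garding} (to this symbol and to $tL_{k+1}\ge0$) push the residue to order $\hbar^{k+2}$ and produce $\beta_{k+1}$ and $C_{k+1}$.

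The main obstacle is the support and size bookkeeping in the inductive step: one must verify, through each use of Theorems~\ref{theo_psido_composition} and \ref{theo_sharp_garding}, that the error symbols stay in $S^0_\mathrm{cyl}(T^*M)$ \emph{uniformly in $t$} and keep their supports inside the shrinking cutoff regions, and — the delicate point — that the nesting property used to absorb the bad terms into $\Oph(\psi_{k+1})$ is genuinely compatible with the constraints $\delta_0<\delta_1<\cdots<2\delta_0$ and $0<\lambda<\min\{2c_0-1,\mu\}$ forced by Lemma~\ref{lemm_positive_lagrange_derivative}; this is precisely where the freedom in choosing the profile $\chi$ and the parameters $\delta_j,\lambda,c_j$ must be spent carefully. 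The uniform-in-$t$ estimates rest on Theorem~\ref{theo_classical_estimate} (boundedness of $\rho(t),\eta(t)$ and $r(t)\sim\rho_\infty t$) and on the decay rates of Lemmas~\ref{lemm_estimate_tildepsi}–\ref{lemm_positive_lagrange_derivative}.
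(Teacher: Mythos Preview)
Your proposal is correct and follows essentially the same route as the paper's own proof: an induction on $k$ in which the commutator calculus (Theorems~\ref{theo_psido_composition} and~\ref{theo_laplacian_psido}) extracts the Lagrange derivative as principal symbol, the sharp G\aa rding inequality (Theorem~\ref{theo_sharp_garding}) turns $L_j\ge0$ into an operator lower bound with the $\jbracket{t}^{-1}$ decay of Lemma~\ref{lemm_positive_lagrange_derivative} controlling the factor of $t$, and the nesting $\psi_{j+1}=1$ on $\rmop{supp}\psi_j$ lets one choose $c_{j+1}$ large enough to absorb the residual $O(\hbar^{j+1})$ term. Your identification of the main difficulty---the uniform-in-$t$ bookkeeping of error symbols---is precisely where the paper's argument is also most compressed.
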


\begin{proof}
    \initstep\step We first prove the existence of a real symbol $b_0(\hbar; t, x, y)\in S^0_\mathrm{cyl}(T^*M)$ which satisfies 
    \begin{equation}\label{eq_hd_step1}
        \begin{split}
            &\partial_t (\Oph (\psi_0(t))^*\Oph (\psi_0(t)))-i\hbar [\Oph(\psi_0(t))^*\Oph(\psi_0(t)), H] \\
        &\geq -\hbar \Oph (b_0(t))+O_{L^2\to L^2}(\hbar^\infty)
        \end{split}
    \end{equation}
    and has an asymptotic expansion 
    \begin{equation}\label{eq_asymptotic_hd_step1}
        b_0(\hbar; t, x, \xi)\sim \sum_{j=0}^\infty \hbar^j b_{0j}(t, x, \xi), \quad b_{0j}(t, x, \xi)\in S^{-j}_\mathrm{cyl}(T^*M)
    \end{equation}
    with $\rmop{supp}b_{0j}(t)\subset \rmop{supp}\psi_0(t)$. 

    By the Leibnitz rule, we have 
    \begin{align*}
            &\partial_t (\Oph (\psi_0(t))^*\Oph (\psi_0(t)))-i\hbar [\Oph(\psi_0(t))^*\Oph(\psi_0(t)), H] \\
        &=2\rmop{Re} \Oph (\psi_0(t))^*(\partial_t \Oph (\psi_0(t))-i\hbar [\Oph(\psi_0(t)), H]). 
    \end{align*}
    We employ Theorem \ref{theo_psido_composition} and Theorem \ref{theo_laplacian_psido}, and we take a symbol $b(\hbar; t, x, \xi)\in S^0_\mathrm{cyl}(T^*M)$ which satisfies 
    \begin{align*}
            &2\rmop{Re} \Oph (\psi_0(t))^*(\partial_t \Oph (\psi_0(t))-i\hbar^{-1} [\Oph(\psi_0(t)), H]) \\
            &=\Oph \left(\partial_t |\psi_0(t)|^2+\{|\psi_0(t)|^2, h_0\}+\hbar b(t)\right)+O_{L^2\to L^2}(\hbar^\infty)
    \end{align*}
    and has an asymptotic expansion 
    \[
        b(\hbar; t, x, \xi)\sim \sum_{j=0}^\infty \hbar^j b_j(t, x, \xi), \quad b_j(t, x, \xi)\in S^{-j}_\mathrm{cyl}(T^*M)
    \]
    with $\rmop{supp}b_j(t)\subset \rmop{supp}\psi_0(t)$. Since $\partial_t |\psi_0(t)|^2+\{ |\psi_0(t)|^2, h_0\}\in S^0_\mathrm{cyl}(T^*M)$ and $\partial_t |\psi_0(t)|^2+\{ |\psi_0(t)|^2, h_0\}\geq 0$, we apply Theorem \ref{theo_sharp_garding} and obtain 
        \[
            \Oph \left(\partial_t |\psi_0(t)|^2+\{|\psi_0(t)|^2, h_0\}\right)
            \geq -\hbar \Oph (c)+O_{L^2\to L^2}(\hbar^\infty), 
        \]
        where $c=c(\hbar; t, x, \xi)$ has an asymptotic expansion 
        \[
            c(\hbar; t, x, \xi)\sim \sum_{j=0}^\infty \hbar^j c_j(t, x, \xi), \quad c_j(t, x, \xi)\in S^{-j}_\mathrm{cyl}(T^*M)
        \]
        with $\rmop{supp}c_j (t)\subset \rmop{supp}\psi_0(t)$. We obtain \eqref{eq_hd_step1} and \eqref{eq_asymptotic_hd_step1} by setting $b_0=b+c$ and $b_{0j}=b_j+c_j$. 

    \step We secondly prove that, if we take a sufficiently large constant $c_1>0$ and set $F_1(t):=\Oph (\psi_0(t))^*\Oph (\psi_0(t))+c_1\hbar t \Oph (a_1(t))$, then we have 
    \begin{equation}
        \label{eq_hd_step2}
        \partial_t F_1(t)-i\hbar [F_1(t), H]\geq -\hbar^2 \Oph (b_1(t))+O_{L^2\to L^2}(\hbar^\infty), 
    \end{equation}
    where $b_1=b_1(\hbar; t, x, \xi)\in S^0_\mathrm{cyl}(T^*M)$ has an asymptotic expansion 
    \begin{equation}\label{eq_asymptotic_hd_step2}
        b_1(\hbar; t, x, \xi)\sim \sum_{j=0}^\infty \hbar^j b_{1j}(t, x, \xi), \quad b_{1j}(t, x, \xi)\in S^{-j}_\mathrm{cyl}(T^*M)
    \end{equation}
    with $\rmop{supp}b_{1j} (t)\subset \rmop{supp}a_1(t)$. 

    The left hand side of \eqref{eq_hd_step2} is equal to 
    \begin{align*}
        &\partial_t (\Oph (\psi_0(t))^*\Oph (\psi_0(t)))-i\hbar [\Oph(\psi_0(t))^*\Oph(\psi_0(t)), H] \\
        &+c_1\hbar t (\partial_t \Oph (a_1(t))-i\hbar [\Oph(a_1(t)), H]) \\
        &+c_1\hbar \Oph (a_1(t)). 
    \end{align*}
    The first term is estimated by \eqref{eq_hd_step1}. Since $\partial_t a_1(t)+\{ a_1(t), h_0\}=O_{S^0_\mathrm{cyl}(T^*M)}(\jbracket{t}^{-1})$ and $\partial_t a_1(t)+\{ a_1(t), h_0\}\geq 0$ by Lemma \ref{lemm_positive_lagrange_derivative}, we apply Theorem \ref{theo_sharp_garding} for the second term and obtain a symbol $b(\hbar; t, x, \xi)\in S^0_\mathrm{cyl}(T^*M)$ which satisfies 
    \[
            \Oph \left(\partial_t a_1(t)+\{a_1(t), h_0\}\right)
            \geq -\hbar \jbracket{t}^{-1}\Oph (b^\prime (t))+O_{L^2\to L^2}(\hbar^\infty)
        \]
        and has an asymptotic expansion 
        \[
            b^\prime (\hbar; t, x, \xi)\sim \sum_{j=0}^\infty \hbar^j b^\prime_j(t, x, \xi), \quad b_j(t, x, \xi)\in S^{-j}_\mathrm{cyl}(T^*M)
        \]
        with $\rmop{supp}b^\prime_j (t)\subset \rmop{supp}a_1(t)$. Hence we have 
    \begin{equation}
       \label{eq_hd_step2_wip}
        \begin{split}
            &\partial_t F_1(t)-i\hbar [F_1(t), H] \\
            &\geq 
            -\hbar \Oph (b_0(t))-c_1\hbar^2 t\jbracket{t}^{-1}\Oph (b^\prime (t))+c_1\hbar \Oph (a_1(t))
            +O_{L^2\to L^2}(\hbar^\infty).
            \end{split} 
    \end{equation}
    Since $\rmop{supp} b_0(t)\subset \rmop{supp}\psi_0(t)$ mod $O(\hbar^\infty)$ by \eqref{eq_asymptotic_hd_step1} and $a_1(t)=1$ near $\rmop{supp}\psi_0(t)$, we can take a constant $c_1>0$ such that 
    \begin{equation}\label{eq_hd_step2_minor}
        - \Oph (b_0(t))+c_1 \Oph (a_1(t)) 
        \geq -\hbar \Oph (b^\pprime (t))+O_{L^2\to L^2}(\hbar^\infty)
    \end{equation}
    where $b^\pprime (\hbar; t, x, \xi)\in S^0_\mathrm{cyl}(T^*M)$ has an asymptotic expansion 
    \[
            b^\pprime (\hbar; t, x, \xi)\sim \sum_{j=0}^\infty \hbar^j b^\pprime_j(t, x, \xi), \quad b^\pprime_j(t, x, \xi)\in S^{-j}_\mathrm{cyl}(T^*M)
    \]
    with $\rmop{supp}b^\pprime_j (t)\subset \rmop{supp}a_1(t)$. We set $b_1(t):=b^\pprime (t)+c_1t\jbracket{t}^{-1}b^\prime (t)$. Then \eqref{eq_hd_step2_wip} and \eqref{eq_hd_step2_minor} implies \eqref{eq_hd_step2} and \eqref{eq_asymptotic_hd_step2} with $b_{1j}(t):=b^\pprime_j (t)+c_1t\jbracket{t}^{-1}b^\prime_j (t)$. 

    \step We repeat the procedure in Step 2 and obtain positive constants $c_2, c_3, \ldots >0$ and $b_k(\hbar; t, x, \xi)\in S^0_\mathrm{cyl}(T^*M)$ such that, if we set 
    \[
        F_k(t):=\Oph (\psi_0(t))^*\Oph(\psi_0(t))+t\sum_{j=1}^k c_j \hbar^j \Oph (\psi_j(t)), 
    \]
    then 
    \[
        \partial_t F_k(t)-i\hbar [F_k(t), H]\geq -\hbar^{k+1} \Oph (b_k(t))+O_{L^2\to L^2}(\hbar^\infty)
    \]
    and $b_k=b_k(\hbar; t, x, \xi)\in S^0_\mathrm{cyl}(T^*M)$ has an asymptotic expansion 
    \[
        b_k(\hbar; t, x, \xi)\sim \sum_{j=0}^\infty \hbar^j b_{kj}(t, x, \xi), \quad b_{kj}(t, x, \xi)\in S^{-j}_\mathrm{cyl}(T^*M)
    \]
    with $\rmop{supp}b_{kj} (t)\subset \rmop{supp}a_k(t)$. In particular, since $\|\Oph (b_k(t))\|_{L^2\to L^2}$ is uniformly bounded in $t\geq 0$ and $0<\hbar \leq 1$, we obtain the desired inequality \eqref{eq_a0_from_below}. 
\end{proof}

\begin{proof}[Proof of Theorem \ref{theo_symbol_aim}]
    (i) is an immediate consequence of Lemma \ref{lemm_estimate_tildepsi} and the definition \eqref{eq_transport} of $\psi_j(t, x, \xi)$. 

    (ii) Take symbols $\psi_j(t)$ in Theorem \ref{theo_positive_hd} and define $\tilde a(\hbar; t, x, \xi)\in S^{-2}_\mathrm{cyl}(T^*M)$ by an asymptotic expansion 
    \[
        \tilde a(\hbar; t, x, \xi)\sim \sum_{j=1}^\infty c_{j-1} \hbar^j \psi_j(\hbar^{-1}t). 
    \]
    We set 
    \[
        A_\hbar (t):=\Oph (a_0(\hbar^{-1}t))^*\Oph(a_0(\hbar^{-1}t))+t\Oph (\tilde a(\hbar; t))
    \]
    Then, by Theorem \ref{theo_positive_hd}, we have 
    \begin{align*}
        \partial_t A_\hbar (t)-i[A_\hbar (t), H]
        &=\hbar^{-1}(\partial_t F_k(\hbar^{-1}t)-i\hbar [F_k(\hbar^{-1}t), H])+O_{L^2\to L^2}(\hbar^k) \\
        &\geq O_{L^2\to L^2}(\hbar^k)
    \end{align*}
    for all $k\geq 0$ uniformly in $t\in [0, t_0]$. 
\end{proof}

\appendix
\section{Escape functions}\label{subs_escape_function}

In this appendix, we construct a diffeomorphism $\Psi: E \to \mathbb{R}_+ \times S$ in Assumption \ref{assu_manifold_with_end_0} by employing an escape function. In this paper, we employ the terminology ``escape function'' in the following sense. 

\begin{defi}\label{defi_escape_function}
    A continuous function $r\in C (M; [0, \infty))$ on $M$ is an \textit{escape function} if 
    \begin{enumerate}
        \renewcommand{\labelenumi}{(\roman{enumi})}
        \item $r(M)=[0, \infty)$; 
        \item the preimage $r^{-1}([0, R])$ is compact for all $R\geq 0$; 
        \item $r(x)$ is $C^\infty$ in $r^{-1}(\mathbb{R}_+)$ and $\diff r(x)\neq 0$ for all $x\in r^{-1}(\mathbb{R}_+)$. Here $\mathbb{R}_+:=(0, \infty)$. 
    \end{enumerate}

    We set $E:=r^{-1}(\mathbb{R}_+)$ and $S:=r^{-1}(1)$. 
\end{defi}

Let $g$ be a Riemannian metric on $M$ and $r\in C(M; [0, \infty))$ be an escape function on $M$. Then $M$ has a natural orthogonal decomposition into radial variable and angular variable: 

\begin{prop}\label{prop_radial_angular_decomposition}
    Let $r\in C(M; [0, \infty))$ be an escape function and $g$ be a Riemannian metric on $M$. We set $S=r^{-1}(1)$ and $E=r^{-1}((0, \infty))$ as in Definition \ref{defi_escape_function}. Then the vector field $\rmop{grad} r/|\rmop{grad} r|_g^2$ generates the flow $\{ \psi_t: E\to E\}_{t\geq 0}$ on $E$ with the following properties. 
    \begin{enumerate}
        \renewcommand{\labelenumi}{(\roman{enumi})}
        \item $r(\psi_t(x))=r(x)+t$ for $x\in E$. 
        \item The mapping
        \begin{equation}\label{eq_diffeo_end}
            \Psi: E \longrightarrow \mathbb{R}_+ \times S, \quad 
            \Psi (x):=(r(x), \psi_{r(x)-1}^{-1}(x))
        \end{equation}
        is a diffeomorphism with the inverse function 
        \[
            \Psi^{-1}(r, \theta)=\psi_{r-1}(\theta). 
        \]
        \item The decomposition $TE \simeq T\mathbb{R}_+ \oplus TS$ induced by \eqref{eq_diffeo_end} is orthogonal. 
    \end{enumerate}
\end{prop}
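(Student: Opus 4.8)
The plan is to work throughout with the smooth vector field $X:=\rmop{grad} r/|\rmop{grad} r|_g^2$ on $E$; it is well defined and smooth there because $\mathrm{d}r$ is nowhere zero on $E=r^{-1}(\mathbb{R}_+)$ by Definition~\ref{defi_escape_function}~(iii). The computation that drives everything is
\[
    X(r)=\mathrm{d}r(X)=g(\rmop{grad} r,X)=\frac{g(\rmop{grad} r,\rmop{grad} r)}{|\rmop{grad} r|_g^2}=1,
\]
so along any integral curve $\gamma$ of $X$ one has $\frac{d}{dt}r(\gamma(t))=1$, hence $r(\gamma(t))=r(\gamma(0))+t$. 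Granting for the moment that the forward flow $\psi_t$ exists, this is exactly (i).

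First I would establish completeness. If $\gamma$ is the maximal forward integral curve of $X$ through $x\in E$, defined on $[0,T_{\max})$, then $r(\gamma(t))=r(x)+t$ confines $\gamma(t)$ to $r^{-1}([r(x),r(x)+T_{\max}])$ for $t<T_{\max}$; this set is a closed subset of the compact set $r^{-1}([0,r(x)+T_{\max}])$ (Definition~\ref{defi_escape_function}~(ii)) and lies inside $E$ since $r\ge r(x)>0$ on it, hence is compact in $E$. The escape lemma then forces $T_{\max}=\infty$, so $\psi_t\colon E\to E$ is defined for all $t\ge0$ and $\psi_t(E)\subset E$ because $r\circ\psi_t=r+t>0$. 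A symmetric argument shows the backward flow $\psi_{-s}$ is defined on $r^{-1}((s,\infty))$ for $s\ge0$; in particular the maps $\psi_{r-1}$ (on $S=r^{-1}(1)$, any $r>0$) and $\psi_{r(x)-1}^{-1}=\psi_{1-r(x)}$ (at $x$) appearing in the statement are all defined, and by (i) they take values in $r^{-1}(\{r\})$ and in $S$ respectively.

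For (ii) I would introduce $\Phi\colon\mathbb{R}_+\times S\to E$, $\Phi(r,\theta):=\psi_{r-1}(\theta)$, which is smooth by smooth dependence of flows on time and initial data and maps into $E$ since $r(\Phi(r,\theta))=r$ by (i). The group law $\psi_s\circ\psi_t=\psi_{s+t}$ together with $r\circ\Phi(r,\theta)=r$ gives $\Psi\circ\Phi=\mathrm{id}$ and $\Phi\circ\Psi=\mathrm{id}$ by a direct check; since $\Psi$ itself is smooth (it is $(t,y)\mapsto\psi_{-t}(y)$ precomposed with the smooth map $x\mapsto(r(x)-1,x)$, $r$ being smooth on $E$), $\Psi$ is a diffeomorphism with $\Psi^{-1}=\Phi$, which is the claimed formula.

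Finally, for (iii): differentiating $\Phi(r,\theta)=\psi_{r-1}(\theta)$ in $r$ yields $\Phi_*(\partial_r)=X\circ\Phi$, while for $v$ tangent to the $S$-factor the function $r\circ\Phi$ is the first-coordinate projection, hence constant along $v$, so $\mathrm{d}r(\Phi_*v)=0$, i.e.\ $g(\rmop{grad} r,\Phi_*v)=0$. As $X$ is a positive multiple of $\rmop{grad} r$, this gives $g(\Phi_*(\partial_r),\Phi_*v)=0$, which is precisely the orthogonality of $TE\simeq T\mathbb{R}_+\oplus TS$ induced by $\Psi$. The only step needing real care is the completeness and well-definedness argument of the second paragraph, where the properness of $r$ is used; everything else is routine flow bookkeeping.
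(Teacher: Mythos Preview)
Your proof is correct and follows essentially the same route as the paper: both hinge on $X(r)=1$ for (i), on the properness of $r$ (confining integral curves to compact subsets of $E$) for completeness of the flow, and on the orthogonality of $\rmop{grad} r$ to the level sets of $r$ for (iii). The only cosmetic difference is that you invoke the escape lemma directly, whereas the paper unpacks it as an open--closed connectedness argument on the set of times to which the integral curve extends.
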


\begin{proof}
If we prove that the vector field generates the flow $\{ \psi_t: E\to E\}_{t\geq 0}$, then the properties from (i) to (iii) are proved easily. 

(i) The definition of $\psi_t$ implies 
\[
    \frac{\diff}{\diff t}(r(\psi_t(x)))=\jbracket{\diff r(\psi_t), \frac{\diff \psi_t}{\diff t}}=\frac{1}{|\grad r(\psi_t)|_g^2}\times \underbrace{\jbracket{\diff r(\psi_t), \grad r(\psi_t)}}_{=|\grad r(\psi_t)|_g^2}=1. 
\]
Thus 
\[
    r(\psi_t(x))=r(x)+\int_0^t \frac{\diff}{\diff t}(r(\psi_t(x)))\, \diff t=r(x)+t. 
\]

(ii) The smoothness and the form of inverse mapping are obvious from the definition of $\Psi: E\to \mathbb{R}_+\times S$. 

(iii) If $\gamma_\mathrm{rad}(t):=\Psi^{-1}(r+t, \theta)$ and $v_\mathrm{ang}\in T_\theta S$, then 
\[
    g\left( \frac{\diff \gamma_\mathrm{rad}}{\diff t}(0), v_\mathrm{ang}\right)
    =g\left( \frac{\grad r}{|\grad r|^2}, v_\mathrm{ang}\right)=0 
\]
by the fact that gradient vectors intersect level sets orthogonally. 

Thus the problem is that the integral curve $t\mapsto \psi_t(x)$ is defined for all $t\geq 0$. 

Fix $x\in E$ and consider the set 
    \[ B:=\left\{\, b\in (0, \infty) \,\middle|\, 
    \begin{aligned}
        &\exists \gamma \in C^\infty ([0, b]; E) \text{ s.t. } \gamma \text{ is an integral curve of } \\
        &\grad r/|\grad r|_g^2 \text{ with initial point } x
    \end{aligned}
    \,\right\}. \]
    If one prove 
    \begin{enumerate}
    \renewcommand{\labelenumi}{(\alph{enumi})}
    \item $B\neq \varnothing$, 
    \item that $B$ is an open subset of $(0, \infty)$ and  
    \item that $B$ is a closed subset of $(0, \infty)$, 
    \end{enumerate}
    then $B=(0, \infty)$ by the connectedness of $(0, \infty)$. 

(a) By the existence of the solutions to ordinary differential equations and $\diff r\neq 0$ near $x$, there exists an integral curve $\gamma: [-\varepsilon, \varepsilon]\to E$ of $\grad r/|\grad r|_g^2$ with initial point $x$. Thus $B\neq \varnothing$. 

(b) Let $b\in B$. Then there exists an integral curve $\gamma: [0, b]\to E$ of $\grad r/|\grad r|_g^2$ with the initial point $x$. Since $\gamma(b)\in E$, the vector field $\grad r/|\grad r|_g^2$ can be defined near $\gamma(b)$. Thus there exists an integral curve $\beta: [-\varepsilon, \varepsilon]\to M$ ($0<\varepsilon \ll 1$) of $\grad r/|\grad r|_g^2$ with the initial point $\gamma(b)$. Since $\gamma (t)=\beta(t-b)$ ($b-\varepsilon<t\leq b$) by the uniqueness of solutions to ordinary differential equations, we can extend $\gamma$ to  
\[ \Gamma (t):=
\begin{cases}
    \gamma(t) & \text{if } 0\leq t \leq b, \\
    \beta(t-b) & \text{if } b<t<b+\varepsilon. 
\end{cases}\]
This $\Gamma$ is an integral curve of $\grad r/|\grad r|_g^2$ defined for $t\in [0, b+\varepsilon]$ with the initial point $x$. Hence $(b-\varepsilon, b+\varepsilon)\subset B$. 

(c) It is enough to prove that if $\gamma: [0, b)\to E$ is an integral curve of $\grad r/|\grad r|_g^2$, then the limit $\lim_{t\to b-0}\gamma (t)\in E$ exists. We denote by $d(x, y)$ the distance associated with the Riemannian metric $g$. Since 
    \[ d(\gamma(s), \gamma(t))\leq \int_s^t \left| \frac{\diff \gamma}{\diff \tau}(\tau)\right|_g\, \diff \tau \leq |t-s| \underbrace{\max_{y\in r^{-1}([0, 1+b])} |X(y)|_g}_{\substack{\text{exists by compactness of} \\ r^{-1}([0, b+1])}}
    \]
for $0\leq s\leq t<b$ by the definition of the distance, the compactness of $r^{-1}([0, 1+b])$ implies the existence of the limit $\gamma(b):=\lim_{t\to b-0}\gamma (t)$. We have $\gamma (b)\in E$ since 
\[
    r(\gamma (t))=r(x)+\int_0^t \jbracket{\diff r, \frac{\grad r}{|\grad r|_g^2}}\, \diff t=r(x)+t\geq r(x)
\]
for $0\leq t<b$ and thus
\[
    r(\gamma (b))=\lim_{t\to b-0} r(\gamma (t))\geq r(x)>0. 
\]
Hence $b\in B$. 
\end{proof}

\section*{Acknowledgements}
The author thanks Professor Kenichi Ito, Professor Shu Nakamura and Professor Kouichi Taira for valuable discussion and advice. 

\bibliography{propagation_of_singularities_bib}
\bibliographystyle{plain}

\end{document}